   \providecommand{\keywords}[1]{\textbf{\textit{Key words:}} #1}
 \numberwithin{equation}{section}
 \newtheorem{thm}{Theorem}[section]
 \newtheorem{lem}[thm]{Lemma}
 \newtheorem{defi}[thm]{Definition}
 \newtheorem{cor}[thm]{Corollary}
 \newtheorem{prop}[thm]{Proposition}
 \newtheorem{rem}[thm]{Remark}
 \newtheorem{ex}[thm]{Example}
\begin{document}
	\title{\textbf {Homogeneous ACM bundles on rational homogeneous spaces}}

	\author{Xinyi Fang
		\thanks{Department of Mathematics,
			Nanjing University,
			No. 22, Hankou Road,
			Nanjing, 210093, P. R. China,
			xyfang@nju.edu.cn.			
			The Research is Sponsored by Innovation Action Plan (Basic research projects) of Science and Technology Commission of Shanghai Municipality (Grant No. 21JC1401900) and Jiangsu Funding Program for Excellent Postdoctoral Talent.
		}
	}

\date{}
\maketitle


\begin{abstract}
In this paper, we characterize homogeneous arithmetically Cohen-Macaulay (ACM) bundles and Ulrich bundles on rational homogeneous spaces. 
From this result, we see that there are only finitely many irreducible homogeneous ACM bundles (up to twist) and Ulrich bundles on these varieties. Moreover, we give numerical criteria for some special irreducible homogeneous bundles to be ACM bundles.
\end{abstract}
\keywords{homogeneous ACM bundle, Ulrich bundle, rational homogeneous space}

\section{Introduction}
A well-known result of Horrocks (\cite{Horrocks1964}) states that a vector bundle $E$ on $\mathbb{P}^n$ splits as a direct sum of line bundles if and only if it has no intermediate cohomology, i.e. $h^i(\mathbb{P}^n, E(t))=0$ for $0<i<n$ and $t\in \mathbb{Z}$. It is thus natural to ask for the meaning of such a vanishing on other projective varieties. 
Let $(X, \mathcal{O}_X(1))$ be a polarized projective variety. A vector bundle $E$ on $X$ is called arithmetically Cohen-Macaulay (ACM) if all its intermediate cohomology modules $H^i(X, E(t))$ vanish for $0<i<\dim X$ and all $t\in \mathbb{Z}$. These sheaves  correspond to maximal Cohen-Macaulay modules on the coordinate ring. So ACM bundles provide a criterion to determine the complexity of the underlying variety.

The study of the
indecomposable ACM bundles on a given variety has attracted the attention of many mathematicians. Besides the result of Horrocks for projective spaces, Kn{\"o}rrer proved that indecomposable ACM bundles on a smooth quadric can only be  line bundles or twisted spinor bundles (\cite{Knorrer1987}). In fact, it is proved that varieties support only a finite number of indecomposable ACM bundles (up to twist) fall into a short list: three or less reduced points on $\mathbb{P}^2$, a rational normal curve, a projective space, a non-singular quadric hypersurface, a cubic scroll in $\mathbb{P}^4$ and the Veronese surface in $\mathbb{P}^5$ (\cite{Buchweitz1987, Eisenbud1988}). Recently, Faenzi and Pons-Llopis proved that most varieties support unbounded families of
indecomposable ACM bundles (\cite{F-P}). As a consequence, one should not expect to find a precise characterization of ACM bundles on other varieties. There are a lot of classical and recent papers devoted to classifying low-rank ACM bundles on particular varieties, like cubic surfaces, $K_3$ surfaces,  Calabi-Yau $3$-folds and hypersurfaces (\cite{Casanellas2011, Faenzi2008, Watanabe2008, Brambilla2011, Filip2014, Ravindra2019}). 


A widely studied class of ACM bundles is homogeneous ACM bundles on homogeneous varieties. The first result in this direction is the full characterization of irreducible homogeneous ACM bundles on Grassmannians (\cite{Costa2016}). Then Du, Ren, Nakayama and the author extended this result and classified irreducible homogeneous ACM bundles on rational homogeneous spaces with Picard rank $1$ (\cite{Du2023, Fang2023}). 
It is then quite natural to consider such a classification for a general rational homogeneous space. The problem gets more intricate due to the rich structure of the Picard group. 
This paper aims to classify all irreducible homogeneous ACM bundles on rational homogeneous spaces. Our motivation behind the result is to give a cohomological characterization of direct sums of line bundles on rational homogeneous spaces, as we note that homogeneous ACM bundles play an important role in the cohomological criteria (see \cite{ottaviani1989} Theorem 2.1 and Lemma 3.2).

To give a brief summary of our results we introduce some notation. Let $G$ be a simple Lie group and $P_I$ the parabolic subgroup of $G$ associated to the set $I$. Let $X=G/P_I$ be the corresponding rational homogeneous space. Let $L_{\varpi}$ be a very ample line bundle with weight $\varpi$ on $X$ giving an embedding $X\subset \mathbb{P}(V^{*})$, $V=H^0(X,L_{\varpi})$. We define the set $\Phi_{X}^{+}$ as follows:
$$\Phi_{X}^{+}:=\{\alpha\in\Phi^{+}_{G}\ |\ (\varpi,\alpha)\neq0\},$$
where $\Phi_{G}^{+}$ is the set of positive roots and $(,)$ denotes the Killing form. Denote by $E_{\lambda}$ the homogeneous bundle on $X=G/P_I$ arising from the irreducible representation of $P_I$ with highest weight $\lambda$. we define its {\it associated datum} $T^{\lambda}_{X}$ as follows:
$$T^{\lambda}_{X}:=\left\{\frac{(\lambda+\rho,\alpha)}{(\varpi,\alpha)} \ |\ \alpha\in\Phi_{X}^{+}\right\},$$
where $\rho$ is the sum of all fundamental weights. We set 
$$M^{\lambda}_{X}=\max\{t|t\in T^{\lambda}_{X}\}~\text{and}~m^{\lambda}_{X}=\min\{t|t\in T^{\lambda}_{X}\}.$$
 
\begin{thm}\label{thm} Let $E_\lambda$ be an irreducible homogeneous vector bundle with highest weight $\lambda$ over $X=G/P_I$. Let $T^{\lambda}_{X}$ be its associated datum.
	Denote $n_l:=\#\{t\in T^{\lambda}_{X}|t=l \}$. Then $E_\lambda$ is an ACM bundle if and only if $n_l\geq1$ for any integer $l$ between $m^{\lambda}_{X}$ and $M^{\lambda}_{X}$.
\end{thm}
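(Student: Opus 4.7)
My approach is via the Borel--Weil--Bott (BWB) theorem. The essential observation is that, since $L_\varpi$ has weight $\varpi$, the twist $E_\lambda(t)$ is the irreducible homogeneous bundle $E_{\lambda+t\varpi}$, so $H^{\bullet}(X,E_\lambda(t))$ is controlled by BWB applied to the weight $\mu_t:=\lambda+t\varpi$. The plan is to translate every BWB ingredient into a statement about the numerical set $T^\lambda_X$.

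First I would analyze the pairing $(\mu_t+\rho,\alpha)$ on each positive root $\alpha\in\Phi_G^+$. For $\alpha\notin\Phi_X^+$ (a root of the Levi) one has $(\varpi,\alpha)=0$, so the pairing collapses to $(\lambda+\rho,\alpha)$, which is strictly positive because $\lambda$ is dominant with respect to the Levi and $\rho$ is strictly dominant on every positive root. For $\alpha\in\Phi_X^+$ we have $(\varpi,\alpha)>0$ and
\[
(\mu_t+\rho,\alpha)=(\varpi,\alpha)\bigl(t+t_\alpha\bigr),\qquad t_\alpha:=\frac{(\lambda+\rho,\alpha)}{(\varpi,\alpha)}\in T^\lambda_X.
\]
Consequently $\mu_t+\rho$ is singular precisely when $-t\in T^\lambda_X$, in which case BWB forces $H^{\bullet}(X,E_\lambda(t))=0$. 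When $-t\notin T^\lambda_X$, the length of the unique Weyl element $w$ making $w(\mu_t+\rho)$ dominant equals the number of positive roots of $G$ on which $\mu_t+\rho$ is negative; Levi positive roots contribute nothing by the above, so
\[
\ell(w)=\#\{\alpha\in\Phi_X^+\,:\,t_\alpha<-t\}=\#\{\tau\in T^\lambda_X\,:\,\tau<-t\},
\]
and BWB places the unique nonvanishing cohomology of $E_\lambda(t)$ in degree $\ell(w)$.

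The remaining step is combinatorial. Noting $\#T^\lambda_X=|\Phi_X^+|=\dim X$ (counted with multiplicity), the cohomology sits in degree $0$ exactly when $-t<m^\lambda_X$ and in degree $\dim X$ exactly when $-t>M^\lambda_X$. Hence intermediate cohomology appears for some integer twist $t$ if and only if there is an integer $s=-t$ with $m^\lambda_X\leq s\leq M^\lambda_X$ and $s\notin T^\lambda_X$. Negating, $E_\lambda$ is ACM iff every integer $l$ between $m^\lambda_X$ and $M^\lambda_X$ satisfies $n_l\geq 1$.

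I expect the main obstacle to be conceptual rather than computational: one must recognise that the associated datum $T^\lambda_X$ is precisely the correct combinatorial shadow of BWB, and that Levi positive roots uniformly give positive pairings and therefore never raise the cohomology degree. Once these two points are in place, the proof is a direct unwinding of BWB; no subtle representation-theoretic identities are needed beyond the strict dominance of $\rho$ and the standard length formula for Weyl group elements.
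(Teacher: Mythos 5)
Your proposal is correct and follows essentially the same route as the paper: both reduce the ACM condition to Borel--Bott--Weil applied to the twists, observe that Levi positive roots always pair positively with $\lambda+\rho$ so only roots in $\Phi_X^+$ matter, and identify singularity of the shifted weight with membership of the twist in $T^\lambda_X$ and the extreme indices $0$ and $\dim X$ with twists below $m^\lambda_X$ and above $M^\lambda_X$ (your sign convention $s=-t$ matches the paper's $E_\lambda(-t)$). The only cosmetic difference is that you compute the exact cohomological degree $\ell(w)$ for every twist, whereas the paper only characterizes the three admissible cases in separate lemmas; the content is identical.
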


This result was obtained in \cite{Costa2016, Du2023, Fang2023} for the case in which $G/P_I$ is a rational homogeneous space with Picard rank $1$. In this paper, we present a unified proof for all rational homogeneous spaces. From the main theorem, we can get the following corollary.

\begin{cor}\label{Cor} There are only finitely many irreducible homogeneous ACM bundles on $X$ up to twist by  $\mathcal{O}_{X}(1)$. 
\end{cor}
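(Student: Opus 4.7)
The strategy is to turn the cohomological characterization from Theorem \ref{thm} into a finite linear-algebraic condition, and then argue that, modulo twist, only finitely many highest weights can satisfy it. If $E_\lambda$ is ACM, then Theorem \ref{thm} forces every integer in $[m^\lambda_X, M^\lambda_X]$ to appear in the set $T^\lambda_X$; since $|T^\lambda_X|\le|\Phi^+_X|$, this yields the a priori bound $M^\lambda_X-m^\lambda_X\le|\Phi^+_X|-1=:C$. It therefore suffices to show that the set of highest weights $\lambda$ of irreducible $P_I$-representations satisfying the width bound $M^\lambda_X-m^\lambda_X\le C$ contains only finitely many $\mathbb{Z}\varpi$-cosets, since tensoring a homogeneous bundle by $\mathcal{O}_X(1)=L_\varpi$ corresponds to $\lambda\mapsto \lambda+\varpi$.

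For each $\alpha\in\Phi^+_X$, set $f_\alpha(\lambda)=(\lambda+\rho,\alpha)/(\varpi,\alpha)$; these are affine-linear in $\lambda$, and $f_\alpha(\lambda+\varpi)=f_\alpha(\lambda)+1$, confirming that the width $M^\lambda_X-m^\lambda_X$ is $\mathbb{Z}\varpi$-invariant. The width bound is equivalent to the finite system $|f_\alpha(\lambda)-f_\beta(\lambda)|\le C$ for $\alpha,\beta\in\Phi^+_X$, which carves out a bounded-thickness "tube" around the affine flat
\[
S=\{\lambda : f_\alpha(\lambda)\text{ is independent of }\alpha\}=-\rho+\mathbb{R}\varpi+(\mathrm{span}_{\mathbb{R}}\Phi^+_X)^{\perp}.
\]

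The decisive step is to show that $\mathrm{span}_{\mathbb{R}}\Phi^+_X=V$, so that $S$ collapses to the single line $-\rho+\mathbb{R}\varpi$. Granting this, the tube is bounded in every direction transverse to $\mathbb{R}\varpi$; projecting to $V/\mathbb{R}\varpi$ sends the weight lattice to a discrete subgroup and the tube to a bounded set, whose intersection is therefore finite, and the corollary follows. I would establish the spanning claim by observing that $\Phi^+_X$ contains every simple root $\alpha_i$ with $i\in I$, and then propagating along the Dynkin diagram of $G$ (which is connected because $G$ is simple): if $j\notin I$ is adjacent to some $i\in I$, then $\alpha_i+\alpha_j$ is a root in $\Phi^+_X$, so any vector $h\in V$ annihilating all of $\Phi^+_X$ must satisfy $\alpha_j(h)=0$ as well, and this cascades to all simple roots. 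Verifying this propagation uniformly across all Cartan types, including non-simply-laced ones, is the step requiring the most care, though the existence of $\alpha_i+\alpha_j$ as a root for adjacent nodes is standard Lie-theoretic input.
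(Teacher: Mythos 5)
Your argument is correct, but the mechanism by which you extract finiteness is genuinely different from the paper's. Both proofs begin identically: Theorem \ref{thm} together with $|T^{\lambda}_{X}|\le|\Phi^{+}_{X}|$ bounds the spread of the values $f_{\alpha}(\lambda)=(\lambda+\rho,\alpha)/(\varpi,\alpha)$. The paper then twists so that $0<m^{\lambda}_{X}\le 1$ (whence $a_{i}\ge 0$ for all $i$, using dominance for $i\notin I$), deduces $M^{\lambda}_{X}<\dim X+1$, and evaluates $f_{\alpha}$ at the \emph{single} root $\alpha=\sum_{i=1}^{n}\alpha_{i}$: since this root pairs strictly positively with every fundamental weight, $f_{\alpha}(\lambda)$ is a positive rational combination of the $a_{i}+1$, and the upper bound together with $a_{i}\ge 0$ leaves only finitely many choices. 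You instead keep all of $\Phi^{+}_{X}$, prove it spans $\mathfrak{h}^{*}$, and conclude that the width condition confines $\lambda$ to a tube around the line $-\rho+\mathbb{R}\varpi$ meeting the weight lattice in finitely many $\mathbb{Z}\varpi$-cosets. Your route is more geometric and never invokes dominance of $\lambda$, which is a genuine gain in generality; the paper's is shorter and yields explicit bounds on the coefficients $a_{i}$ (note that its chosen root $\sum_{i}\alpha_{i}$ is in effect a one-root certificate of your spanning claim, and both arguments ultimately rest on connectedness of the Dynkin diagram). The one step you flag as delicate is handled uniformly across all types, simply-laced or not, by the standard fact that the sum of the simple roots over any connected subdiagram is a root: applying this to the initial segments of a path from $I$ to an arbitrary node $j$ produces roots lying in $\Phi^{+}_{X}$ whose successive differences are exactly the simple roots along the path, which completes the propagation. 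A last small point worth recording: if $\varpi$ is not primitive in the weight lattice, finiteness of the set of cosets modulo $\Lambda\cap\mathbb{R}\varpi$ still implies finiteness modulo $\mathbb{Z}\varpi$, since each coset of the former kind splits into finitely many of the latter.
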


Among ACM bundles, there are bundles whose associated maximal Cohen-Macaulay modules have the maximal number of generators. Such bundles are called \emph{Ulrich} bundles which have been recently objects of deep inspection (\cite{Brennan1987, Casanellas2012, Lee2021}). Ulrich bundles have many good properties, thus their description is of particular interest. Costa and Mir{\'o}-Roig gave a full classification of all irreducible homogeneous Ulrich bundles on Grassmannians (\cite{Costa2015}). The results were then generalized to flag manifolds (\cite{Coskun2017}). At the same time, Fonarev classified irreducible homogeneous Ulrich bundles on isotropic Grassmannians (\cite{Fonarev2016}). Later, Lee and Park considered the same problem on exceptional homogeneous varieties (\cite{Lee2021-2}). In this paper, we give a complete classification of irreducible homogeneous Ulrich bundles on all rational homogeneous spaces.
\begin{thm}\label{thm1} Let $E_\lambda$ be an irreducible homogeneous vector bundle with highest weight $\lambda$ over $X=G/P_I$. Let $T^{\lambda}_{X}$ be its associated datum. Then $E_\lambda$ is an Ulrich bundle if and only if $T^{\lambda}_{X}=\{1,2,\ldots,\dim X\}$.
\end{thm}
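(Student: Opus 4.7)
The plan is to combine the standard cohomological characterization of Ulrich bundles with the Borel--Weil--Bott theorem, in the same spirit as the proof of Theorem \ref{thm}.

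First I would invoke the Eisenbud--Schreyer criterion: on an $n$-dimensional polarized variety $(X,\mathcal{O}_X(1))$, a bundle $E$ is Ulrich if and only if $H^i(X,E(-p))=0$ for every $i\ge 0$ and every $1\le p\le n$. Since $\mathcal{O}_X(-p)=L_{\varpi}^{-p}$, the twist $E_\lambda(-p)$ is again irreducible homogeneous with highest weight $\lambda-p\varpi$, so the question reduces to determining which integers $p$ force the total cohomology $H^{*}(X,E_{\lambda-p\varpi})$ to vanish. By Borel--Weil--Bott, vanishing is equivalent to the existence of some $\alpha\in\Phi_G^{+}$ with $(\lambda-p\varpi+\rho,\alpha)=0$. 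A short check using that $\lambda$ is dominant for the Levi of $P_I$ and that $(\varpi,\alpha)=0$ for positive roots $\alpha$ of that Levi shows $(\lambda+\rho,\alpha)>0$ on Levi roots, so the singular root must lie in $\Phi_X^{+}$. On $\Phi_X^{+}$ the equation rearranges to $p=(\lambda+\rho,\alpha)/(\varpi,\alpha)$, so $H^{*}(X,E_\lambda(-p))=0$ if and only if $p\in T^{\lambda}_{X}$.

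Combining these two steps, $E_\lambda$ is Ulrich if and only if $\{1,2,\ldots,\dim X\}\subseteq T^{\lambda}_{X}$. To finish, I would use the fact that $|\Phi_X^{+}|=\dim X$ (the roots in $\Phi_X^{+}$ index the tangent directions at the base point of $G/P_I$), hence $|T^{\lambda}_{X}|\le\dim X$; the inclusion above is therefore automatically an equality $T^{\lambda}_{X}=\{1,2,\ldots,\dim X\}$.

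I do not anticipate a serious obstacle: the argument is a direct specialization of the Borel--Weil--Bott engine already used to prove Theorem \ref{thm}. The only mildly delicate point is the verification that, for $\mu=\lambda-p\varpi$, any singular positive root must belong to $\Phi_X^{+}$ rather than to the Levi, which is an immediate consequence of the Levi-dominance of $\lambda$ together with $(\rho,\alpha)>0$ on positive roots.
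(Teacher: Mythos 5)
Your argument is correct, and it reaches the conclusion by a genuinely shorter route than the paper. The paper works from its stated definition of Ulrich (ACM, plus $h^0(E(-t))=0$ for $t\ge 1$, plus $h^{\dim X}(E(-t))=0$ for $t\le \dim X$): it first extracts $m^{\lambda}_{X}\le 1$ and $M^{\lambda}_{X}\ge \dim X$ from the two boundary conditions via Lemmas \ref{m} and \ref{M}, then invokes Theorem \ref{thm} to force every integer in $[m^{\lambda}_{X},M^{\lambda}_{X}]$ to occur in $T^{\lambda}_{X}$, and finishes with the cardinality bound $|T^{\lambda}_{X}|\le|\Phi_{X}^{+}|=\dim X$. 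You instead start from the Eisenbud--Schreyer characterization ($H^{\bullet}(X,E(-p))=0$ for $1\le p\le\dim X$), which reduces the whole statement to the single observation that total cohomology of $E_{\lambda}(-p)$ vanishes iff $\lambda+\rho-p\varpi$ is singular iff $p\in T^{\lambda}_{X}$ (the content of Lemma \ref{L}, including your correct check that the singular root cannot be a Levi root), followed by the same pigeonhole. This buys a cleaner, symmetric ``iff'' chain that never needs Theorem \ref{thm} or the index-$0$/index-$\dim X$ lemmas; the price is that you outsource to the literature the (standard but not immediate) equivalence between the Eisenbud--Schreyer criterion and the definition the paper actually uses, whereas the paper's proof is self-contained relative to its own definition. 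Both proofs rest on the same two pillars: Borel--Bott--Weil applied to the twists, and the identity $|\Phi_{X}^{+}|=\dim X$.
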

\begin{cor}There are only finitely many irreducible homogeneous Ulrich bundles on $X$.
\end{cor}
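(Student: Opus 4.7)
The plan is to apply Theorem~\ref{thm1} and convert the Ulrich condition into a linear bound on the coordinates of $\lambda$ in the fundamental weight basis. By Theorem~\ref{thm1}, every irreducible homogeneous Ulrich bundle $E_{\lambda}$ corresponds to a highest weight $\lambda$ with $T^{\lambda}_{X}=\{1,2,\ldots,\dim X\}$ as a set. In particular, for every $\alpha \in \Phi^{+}_{X}$ I get the two-sided bound
$$(\varpi, \alpha) \;\leq\; (\lambda+\rho,\alpha) \;\leq\; \dim X \cdot (\varpi, \alpha).$$

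The next step is to evaluate this inequality on the highest root $\theta$ of $G$. Writing $\theta=\sum_{j}m_{j}\alpha_{j}$ with $m_{j}\geq 1$ for all $j$, and $\varpi=\sum_{i\in I}a_{i}\omega_{i}$ with $a_{i}\geq 1$ (since $L_{\varpi}$ is very ample), I have $(\varpi,\theta)>0$, so $\theta \in \Phi^{+}_{X}$. Expanding $\lambda+\rho=\sum_{j}c_{j}\omega_{j}$ and using $(\omega_{i},\alpha_{j})=\delta_{ij}(\alpha_{j},\alpha_{j})/2$, the upper bound evaluated at $\theta$ becomes
$$\sum_{j} c_{j}\,m_{j}\,\frac{(\alpha_{j},\alpha_{j})}{2}\;\leq\;\dim X \cdot (\varpi,\theta).$$

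The remaining task is to show $c_{j}\geq 1$ for every $j$; once established, every summand on the left is positive and the displayed bound forces each individual $c_{j}$ to be bounded from above. For $j \notin I$, the Levi-dominance of the highest weight $\lambda$ gives $\lambda_{j}\geq 0$, hence $c_{j}=\lambda_{j}+1\geq 1$. For $j \in I$, I would apply the lower bound $(\lambda+\rho,\alpha_{j})\geq (\varpi,\alpha_{j})$ to the simple root $\alpha_{j}\in \Phi^{+}_{X}$ itself, which yields $c_{j}\geq a_{j}\geq 1$. Since $\lambda+\rho$ is an integral weight whose coordinates now lie in a bounded subset of $\mathbb{Z}_{>0}$, only finitely many $\lambda$ can occur, proving the corollary.

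The argument is quite direct, so there is no serious obstacle; the one step deserving attention is the choice of test root. The highest root $\theta$ is selected precisely because its strictly positive coefficient on \emph{every} simple root lets a single inequality simultaneously constrain all coordinates $c_{j}$, a feature that relies on the connectedness of the Dynkin diagram (equivalently, on $G$ being simple).
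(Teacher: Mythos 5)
Your proof is correct and follows essentially the same route as the paper: the paper's argument (via Corollary \ref{c}) also bounds all coordinates of $\lambda$ at once by evaluating the bounded quantity $\frac{(\lambda+\rho,\alpha)}{(\varpi,\alpha)}\le \dim X$ on a single full-support test root and using nonnegativity of the coefficients, the only difference being that it takes $\alpha=\alpha_1+\cdots+\alpha_n$ where you take the highest root $\theta$. Your derivation of $c_j\ge n_j\ge 1$ for $j\in I$ from the lower bound at the simple root $\alpha_j$ is exactly the content of the paper's Remark \ref{min} combined with $m^{\lambda}_{X}=1$, so nothing essentially new or missing.
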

 
\paragraph{Plan of the paper} In Section $2$, we fix the notation and collect basic facts about semi-simple Lie groups (algebras) and general results about homogeneous bundles on rational homogeneous spaces. In particular, we recall the Borel-Bott-Weil Theorem at the end of Section $2$. In Section $3$, we first show our main theorems on classifying the irreducible homogeneous ACM bundles and Ulrich bundles on all rational homogeneous spaces. Then, we explicitly describe the associated datum  with respect to the minimal ample class. In section $4$, we give simple numerical criteria to judge that some special irreducible homogeneous bundles are ACM bundles on $G/P$ with $G$ simple classical Lie group and we give the necessary and sufficient conditions under which line bundles are ACM bundles.

 \paragraph{Notation and convention}

\begin{itemize}
\item $X=G/P_I$: the rational homogeneous space with simple Lie group $G$ and parabolic subgroup $P_I$;	
\item  $\Phi_{G}^+$: the set of positive roots;
\item  $\Phi_{G}^-$: the set of negative roots;
\item  $\lambda_i$: the $i$-th fundamental weight;
\item $\rho$: $\lambda_1+\cdots+\lambda_n$;
\item $\alpha_i$: the $i$-th simple root;
\item  $(\cdot,\cdot)$: the Killing form;
\item  $L_{\varpi}$: the line bundle on $X$ with weight $\varpi$;
\item $E_\lambda$: the irreducible homogeneous vector bundle with highest weight $\lambda$;
\item $T^{\lambda}_{X}$: the associated datum of $E_\lambda$ on $X$.
\end{itemize}

\section{Preliminaries}
Throughout this paper, all algebraic varieties and morphisms will be defined over the field $\mathbb{C}$.

\subsection{Semi-simple Lie groups and algebras}
We fix the notation and recall some relevant concepts on semi-simple Lie groups and algebras. We refer to \cite{carter2005lie}, \cite{fulton2013representation} for further details.
\begin{enumerate}
	\item[(1)] Let $G$ be a semi-simple Lie group. The semi-simple Lie algebra $T_e{G}$ is called the Lie algebra associated to $G$ and it is denoted by $\mathfrak{g}$.
	
	\item[(2)] Let $\mathfrak{h}$ be a Cartan subalgebra of $\mathfrak{g}$ and $\Phi\subset \mathfrak{h}^{\ast}$ the roots of the pair $(\mathfrak{g},\mathfrak{h})$. By Cartan decomposition,
	\[\mathfrak{g}=\mathfrak{h}\oplus \sum_{\alpha\in  \Phi} \mathfrak{g}_\alpha,\]
	where $\mathfrak{g}_\alpha$ is a $1$-dimensional root space.
	Denote by $\Phi_{G}^{+}$ (resp. $\Phi_{G}^{-}$) $\subset\Phi$ the subset of positive (resp. negative) roots with respect to some ordering of the root system and by $\Delta$ a base for $\Phi_{G}^{+}$.
	
	\item[(3)] The Killing form $(\alpha, \beta):=tr(\text{ad}_\alpha\circ \text{ad}_\beta)$ defines a non-degenerate bilinear form on $\mathfrak{h}$, where $\alpha, \beta\in \mathfrak{g}$ and $\text{ad}$ is the adjoint representation of $\mathfrak{g}$. The Killing form can be extended to a non-degenerate bilinear form on $\mathfrak{h}^{\ast}$.
	
	\item[(4)] Denote by $\Lambda\subset \mathfrak{h}^{\ast}$ the weight lattice of $G$, given by
	\begin{align*}
		\Lambda=\{\lambda\in \mathfrak{h}^{\ast}|\frac{2(\lambda,\alpha)}{(\alpha,\alpha)}\in\mathbb{Z},\forall \alpha\in\Phi\}. 
	\end{align*}
	Let $n=|\Delta|$ be the rank of $\mathfrak{g}$. Index the collection of simple roots by $\alpha_{1},\ldots,\alpha_{n}$. Denote by $\lambda_{1},\ldots,\lambda_{n}$ the set of fundamental weights defined by $\frac{2(\lambda_i,\alpha_j)}{(\alpha_j,\alpha_j)}=\delta_{ij}$. The fundamental weights form a $\mathbb{Z}$-basis for $\Lambda$. A weight $\lambda=\sum_{i=1}^{n}a_{i}\lambda_{i}$ is said to be dominant if $a_{i}\geq 0$ for $1\leq i \leq n$ and strongly dominant if $a_{i}>0$. 
	
	\item[(5)] The Dynkin diagram of $G$, which we denoted by $\mathcal{D}:=\mathcal{D}(G)$, is determined by the Cartan matrix $(A_{ij}=2\frac{(\alpha_i,\alpha_j)}{(\alpha_i,\alpha_i)})$. It consists of a graph whose set of nodes is $\{1,2,\ldots,n\}$ and the nodes $i$ and $j$ are joined by $A_{ij}A_{ji}$ edges. When two nodes $i$ and $j$ are joined by a double or triple edge, we add to it an arrow pointing to $i$ if $|A_{ij}|>|A_{ji}|$. We call $\alpha_i$ a short root of $\mathcal{D}$ and $\alpha_j$ a  long root of $\mathcal{D}$. There is a one-to-one correspondence between isomorphism classes of semi-simple Lie algebras and Dynkin diagrams of reduced root systems. Moreover, every reduced root system is a disjoint union of mutually orthogonal irreducible root subsystems, each of them corresponding to one of the connected finite Dynkin diagrams $A_n$, $B_n$, $C_n$, $D_n$ $(n\in \mathbb{Z}_{>0})$, $E_6$, $E_7$, $E_8$, $F_4$, $G_2$. We use the Bourbaki labeling as follows.
	
	\setlength{\unitlength}{0.4mm}
	\begin{center}
		\begin{picture}(280,0)(0,120)
			\put(10,100){\circle{4}} \put(30,100){\circle{4}}
			\put(60,100){\circle{4}} \put(80,100){\circle{4}}
			\put(12,100){\line(1,0){16}} \put(40,100){\circle*{2}}
			\put(45,100){\circle*{2}} \put(50,100){\circle*{2}}
			\put(62,100){\line(1,0){16}}
			\put(100,100){\circle{4}}
			\put(82,100){\line(1,0){16}}
			\put(-10,100){\makebox(0,0)[cc]{$A_n:$}}
			\put(7,110){$_1$}
			\put(27,110){$_2$}
			\put(51,110){$_{n-2}$}
			\put(71,110){$_{n-1}$}
			\put(97,110){$_{n}$}
			
			\put(210,100){\circle{4}} \put(230,100){\circle{4}}
			\put(260,100){\circle{4}} \put(280,100){\circle{4}}
			\put(212,100){\line(1,0){16}} \put(240,100){\circle*{2}}
			\put(245,100){\circle*{2}} \put(250,100){\circle*{2}}
			\put(262,100){\line(1,0){16}}
			\put(300,100){\circle{4}}
			\put(281,102){\line(1,0){18}}
			\put(281,98){\line(1,0){18}}
			\put(285,103){\line(3,-1){9}}
			\put(285,97){\line(3,1){9}}
			\put(190,100){\makebox(0,0)[cc]{$B_n:$}}
			\put(207,110){$_1$}
			\put(227,110){$_2$}
			\put(251,110){$_{n-2}$}
			\put(271,110){$_{n-1}$}
			\put(297,110){$_{n}$}
		\end{picture}
	\end{center}
	\vspace{.3cm}
	
	\begin{center}
		\begin{picture}(280,20)(0,120)
			\put(10,100){\circle{4}} \put(12,100){\line(1,0){16}}
			\put(30,100){\circle{4}} \put(40,100){\circle*{2}}
			\put(45,100){\circle*{2}} \put(50,100){\circle*{2}}
			\put(60,100){\circle{4}}
			\put(62,100){\line(1,0){16}} \put(80,100){\circle{4}}
			\put(82,100){\line(3,1){16}} \put(100,94){\circle{4}} \put(82,100){\line(3,-1){16}} \put(100,106){\circle{4}}
			\put(-10,100){\makebox(0,0)[cc]{$D_n:$}}
			\put(7,110){$_1$}
			\put(27,110){$_2$}
			\put(51,110){$_{n-3}$}
			\put(71,110){$_{n-2}$}
			\put(105,106){$_{n-1}$}
			\put(105,94){$_{n}$}
			
			\put(210,100){\circle{4}} \put(230,100){\circle{4}}
			\put(260,100){\circle{4}} \put(280,100){\circle{4}}
			\put(212,100){\line(1,0){16}} \put(240,100){\circle*{2}}
			\put(245,100){\circle*{2}} \put(250,100){\circle*{2}}
			\put(262,100){\line(1,0){16}}
			\put(300,100){\circle{4}}
			\put(281,102){\line(1,0){18}}
			\put(281,98){\line(1,0){18}}
			\put(285,100){\line(3,-1){9}}
			\put(285,100){\line(3,1){9}}
			\put(190,100){\makebox(0,0)[cc]{$C_n:$}}
			\put(207,110){$_1$}
			\put(227,110){$_2$}
			\put(251,110){$_{n-2}$}
			\put(271,110){$_{n-1}$}
			\put(297,110){$_{n}$}
		\end{picture}
	\end{center}
	\vspace{.3cm}
	
	\begin{center}
		\begin{picture}(280,20)(0,120)
			\put(10,100){\circle{4}} \put(12,100){\line(1,0){16}}
			\put(30,100){\circle{4}} \put(32,100){\line(1,0){16}}
			\put(50,100){\circle{4}} \put(52,100){\line(1,0){16}}
			\put(70,100){\circle{4}} \put(72,100){\line(1,0){16}}
			\put(90,100){\circle{4}} \put(50,102){\line(0,1){11}}
			\put(50,115){\circle{4}} \put(-10,100){\makebox(0,0)[cc]{$E_6:$}}
			\put(7,90){$_1$}
			\put(27,90){$_3$}
			\put(47,90){$_4$}
			\put(67,90){$_5$}
			\put(88,90){$_6$}
			\put(55,115){$_2$}
			
			\put(210,100){\circle{4}} \put(212,100){\line(1,0){16}}
			\put(230,100){\circle{4}} \put(231,102){\line(1,0){18}} \put(231,98){\line(1,0){18}}
			\put(250,100){\circle{4}} \put(252,100){\line(1,0){16}}
			\put(270,100){\circle{4}}
			\put(190,100){\makebox(0,0)[cc]{$F_4:$}}
			\put(235,103){\line(3,-1){9}}
			\put(235,97){\line(3,1){9}}
			\put(207,90){$_1$}
			\put(227,90){$_2$}
			\put(247,90){$_3$}
			\put(267,90){$_4$}
		\end{picture}
	\end{center}
	\vspace{.3cm}
	
	\begin{center}
		\begin{picture}(280,20)(0,120)
			\put(10,100){\circle{4}} \put(12,100){\line(1,0){16}}
			\put(30,100){\circle{4}} \put(32,100){\line(1,0){16}}
			\put(50,100){\circle{4}} \put(52,100){\line(1,0){16}}
			\put(70,100){\circle{4}} \put(72,100){\line(1,0){16}}
			\put(90,100){\circle{4}} \put(92,100){\line(1,0){16}}
			\put(110,100){\circle{4}} \put(50,102){\line(0,1){11}}
			\put(50,115){\circle{4}} \put(-10,100){\makebox(0,0)[cc]{$E_7:$}}
			\put(7,90){$_1$}
			\put(27,90){$_3$}
			\put(47,90){$_4$}
			\put(67,90){$_5$}
			\put(87,90){$_6$}
			\put(107,90){$_7$}
			\put(54,115){$_2$}
			
			\put(210,100){\circle{4}} \put(230,100){\circle{4}}
			\put(211,102){\line(1,0){18}}
			\put(212,100){\line(1,0){16}}
			\put(211,98){\line(1,0){18}}
			\put(190,100){\makebox(0,0)[cc]{$G_2:$}}
			\put(207,110){$_1$}
			\put(227,110){$_2$}
			\put(215,100){\line(3,-1){9}}
			\put(215,100){\line(3,1){9}}
		\end{picture}
	\end{center}
	\vspace{.3cm}
	
	\begin{center}
		\begin{picture}(280,20)(0,120)
			\put(10,100){\circle{4}} \put(12,100){\line(1,0){16}}
			\put(30,100){\circle{4}} \put(32,100){\line(1,0){16}}
			\put(50,100){\circle{4}} \put(52,100){\line(1,0){16}}
			\put(70,100){\circle{4}} \put(72,100){\line(1,0){16}}
			\put(90,100){\circle{4}} \put(92,100){\line(1,0){16}}
			\put(110,100){\circle{4}} \put(112,100){\line(1,0){16}}
			\put(130,100){\circle{4}} \put(50,102){\line(0,1){11}}
			\put(50,115){\circle{4}} \put(-10,100){\makebox(0,0)[cc]{$E_8:$}}
			\put(7,90){$_1$}
			\put(27,90){$_3$}
			\put(47,90){$_4$}
			\put(67,90){$_5$}
			\put(87,90){$_6$}
			\put(107,90){$_7$}
			\put(127,90){$_8$}
			\put(54,115){$_2$}
		\end{picture}
	\end{center}
	\vspace{2cm}
	
	The connected components of the Dynkin diagram $\mathcal{D}$ determine the simple algebraic groups that are factors of the semi-simple Lie group $G$, each of them corresponding to one of the Dynkin diagrams above.
\end{enumerate}

\subsection{Rational homogeneous space}
We review some well-known facts about rational homogeneous spaces and refer to \cite{ottaviani1995rational}, \cite{snow1989homogeneous} for more  details. Let $G$ be a semi-simple Lie group with associated Lie algebra $\mathfrak{g}$. Fix a Cartan subalgebra $\mathfrak{h}$ of $\mathfrak{g}$. We denote the algebra generated by $\mathfrak{h}$ and the negative root spaces by $\mathfrak{b}=\mathfrak{h}\oplus \sum_{\alpha\in \Phi_{G}^-}\mathfrak{g}_\alpha$. Let $B$ be the closed connected solvable subgroup of $G$ associated to $\mathfrak{b}$. We call $B$ and any of its conjugates in $G$ a Borel subgroup.

A parabolic subgroup $P$ of $G$ is a connected subgroup that contains a conjugate of $B$. The quotient $G/P$ is called a \emph{rational homogeneous space}. For simplicity, we usually assume that $P$ contains the Borel group $B$ just constructed. Since $G/B$ is compact and the projection $G/B\rightarrow G/P$ is surjective, $G/P$ is also compact, hence projective.  By the above arguments, the Lie algebra $\mathfrak{p}$ of $P$ decomposes into root spaces of $G$:
\begin{align}\label{para}
	\mathfrak{p}=\mathfrak{b}\oplus\sum_{\alpha\in\Phi^+_P}\mathfrak{g}_\alpha,  
\end{align}
where $\Phi^+_P$ is a subset of $\Phi_{G}^+$ that is closed under the addition of roots. Since $\Phi_{P}=\Phi_{G}^-\cup\Phi^+_P$ must be a closed set of roots and $\Phi_{P}$  contains all the negative simple roots, it follows that $\Phi^+_P$ is generated by a set of positive simple roots. Therefore, there is a one-to-one correspondence between the subsets $I\subset \{1,2,\ldots,n\}$ and parabolic subgroups $P$. In this paper, we denote 
$P_I$ by the parabolic subgroup of $G$, whose $\Phi^+_{P_I}$ is generated by simple roots in the complement of $I$. In particular, when $I=\{k\}$, $P_k$ is a maximal parabolic subgroup of $G$. For example, $A_n/P_k$ is the usual Grassmannian $G(k,n+1)$.

Since every semi-simple Lie group can be decomposed into a direct product of simple Lie groups, every rational homogeneous space $G/P$ can be decomposed into a product
\[G/P\simeq G_1/P_{I_1}\times G_2/P_{I_2}\times \cdots \times G_m/P_{I_m}\]
of rational homogeneous spaces with simple Lie groups $G_1, \ldots, G_m$. Hence here we only consider the case $G$ is a simple Lie group. Below we list some examples of rational homogeneous spaces and give their geometric explanations.
\begin{ex}
	\begin{enumerate}
		\item[(1)] $A_{n-1}/P_{d_1,\ldots,d_s}$ is the usual \emph{flag manifold} parametrizing flags
		$$V_{d_1}\subset V_{d_2}\subset\cdots\subset V_{d_s}\subset \mathbb{C},$$
		where $\dim(V_{d_i})=d_i~(1\le i\le s)$.
		\item[(2)] Let $V=\mathbb{C}^{2n+1}$ (resp. $\mathbb{C}^{2n}$) be a vector space equipped with a non-degenerate symmetric bilinear form $\mathcal{Q}$. Then $B_n/P_{d_1,\ldots,d_s}$ (resp. $D_n/P_{d_1,\ldots,d_s}~(d_s\le n-2)$) is the \emph{odd (resp. even) orthogonal flag manifold}, which parametrizes flags
		$$V_{d_1}\subset V_{d_2}\subset\cdots\subset V_{d_s}\subset V,$$
		where each $V_{d_i}~(1\le i\le s)$ is a $d_i$-dimensional isotropic subspace in $V$.
		\item[(3)] Let $\bar{V}=\mathbb{C}^{2n}$ be a vector space equipped with a non-degenerate skew-symmetric bilinear form $\Omega$. Then $C_n/P_I$ is the \emph{symplectic flag manifold}, which parametrizes flags
		$$V_{d_1}\subset V_{d_2}\subset\cdots\subset V_{d_s}\subset \bar{V},$$
		where each $V_{d_i}~(1\le i\le s)$ is a $d_i$-dimensional isotropic subspace in $\bar{V}$.
	\end{enumerate}
\end{ex}
\subsection{Homogeneous  vector bundles}
We now concentrate on the case of homogeneous vector bundles on $G/P$.
It's natural to define a homogeneous vector bundle $E$ if the action of $G$ on $G/P$ can be lifted to $E$. 

\begin{defi}
	Let $E$ be a vector bundle on  $G/P$. $E$ is called \emph{homogeneous} if there exists an action $G$ over $E$ such that the following diagram commutes
	
\centerline{
	\xymatrix{   G\times E \ar[r]\ar[d]& E\ar[d]\\
		G\times G/P \ar[r] & G/P. }}		
\end{defi}

It's evident from this definition that the tangent bundles $T(G/P)$ are homogeneous. A well-known fact is that all line bundles on $G/P$ are homogeneous (see \cite{snow1989homogeneous} Theorem 6.4). This is certainly not the case for vector bundles of higher rank. We will use in this paper another equivalent definition of homogeneous bundles that is useful and easier to handle.
\begin{thm}(\cite{ottaviani1995rational} Theorem 9.7)
	A vector bundle $E$ of rank $r$ on $G/P$ is homogeneous if and only if there exists a representation $\upsilon:P\to GL(r)$  such that $E\cong E_{\upsilon}$.
\end{thm}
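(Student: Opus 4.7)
The plan is to use the classical associated bundle construction for the principal $P$-bundle $G \to G/P$, which sets up an equivalence of categories between finite-dimensional representations of $P$ and $G$-homogeneous vector bundles on $G/P$.

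For the \emph{if} direction, given a representation $\upsilon : P \to GL(r)$, I would construct the associated bundle $E_\upsilon := G \times^P \mathbb{C}^r$ as the quotient of $G \times \mathbb{C}^r$ by the right $P$-action $(g, v) \cdot p = (gp,\, \upsilon(p)^{-1} v)$. The projection $[g, v] \mapsto gP$ realizes $E_\upsilon$ as a rank-$r$ vector bundle over $G/P$, with local trivializations pulled back from local sections of $G \to G/P$. Left multiplication by $G$ on the first factor descends to $E_\upsilon$ and covers the natural $G$-action on $G/P$, so $E_\upsilon$ is homogeneous in the sense of the definition just introduced.

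For the \emph{only if} direction, suppose $E$ is a homogeneous vector bundle with $G$-action lifting the action on $G/P$. Let $o = eP$, whose stabilizer in $G$ is exactly $P$. Since $P$ preserves the fiber $E_o$ and acts linearly by $G$-equivariance, we obtain a representation $\upsilon : P \to GL(E_o) \cong GL(r)$. I would then define $\Psi : G \times^P E_o \to E$ by $\Psi([g, v]) = g \cdot v$. This is well-defined because $(gp) \cdot \upsilon(p)^{-1} v = g \cdot (p \cdot \upsilon(p)^{-1} v) = g \cdot v$; it is $G$-equivariant by construction and restricts to the identity on $E_o$, so by $G$-equivariance and fiberwise linearity it is an isomorphism of vector bundles.

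The main point requiring care is that all the constructions are algebraic (or holomorphic) rather than merely set-theoretic. This reduces to the fact that $G \to G/P$ is a locally trivial principal $P$-bundle in the algebraic category, which ensures both that $G \times^P \mathbb{C}^r$ carries a natural algebraic vector bundle structure and that $\Psi$ is an algebraic morphism; this is the only step where nontrivial structure theory of algebraic groups is invoked, but it is entirely standard.
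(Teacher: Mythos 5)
Your proposal is correct: it is the standard associated-bundle argument, constructing $E_\upsilon = G\times^P\mathbb{C}^r$ for the ``if'' direction and recovering $\upsilon$ from the $P$-action on the fiber over $eP$ for the ``only if'' direction, with the well-definedness and equivariance checks done properly. Note that the paper itself gives no proof of this statement --- it is quoted from \cite{ottaviani1995rational} (Theorem 9.7) --- and the argument there is essentially the one you give, so there is nothing to compare beyond observing that your remark on Zariski-local triviality of $G\to G/P$ (via the Bruhat big cell) is exactly the point that makes the construction work in the algebraic category.
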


If a representation $\upsilon:P\to GL(V)$ is irreducible, then we call the corresponding bundle $E_{\upsilon}$ an \emph{irreducible homogeneous vector bundle}. By Ise's result, 
any irreducible representation of $P$ is the tensor product of a $1$-dimensional representation of a torus and an irreducible representation of the semi-simple Lie group $S_P$, which is the semi-simple part of $P$. Hence irreducible homogeneous vector bundles on $G/P_I$ have been completely classified (c.f. \cite{ottaviani1995rational} Proposition 10.9). If $\lambda'$ is the highest weight of $V$ as an irreducible $S_{P_I}$-module ($\lambda'$ is dominant by highest weight theory), we define $\lambda'+\sum_{i\in I}n_i\lambda_i~(n_i\in \mathbb{Z})$ to be the highest weight of the irreducible $P_I$-module $V\bigotimes \otimes_{i\in I}L^{n_i}_{\lambda_i}$, where $L_{\lambda_i}$ is the one-dimensional representation defined by the $i$-th fundamental weight $\lambda_i$.

\begin{ex}\label{ex}
	\begin{enumerate}
		\item[(1)] On $X=B_n/P_{d_1,\ldots,d_s}~(d_s\le n-1)$ (resp. $D_n/P_{d_1,\ldots,d_s}~(d_s\le n-2)$), there exists a flag of universal subbundles
		$$0=H_0\subset H_{d_1}\subset \cdots\subset H_{d_s}\subset H_{d_s}^{\bot}\subset\cdots\subset H_{d_1}^{\bot}\subset\mathcal{O}_{X}\times V,$$ 
		where $H_{d_i}$ is a rank $d_i$ vector bundle whose fiber at a flag (point) $V_{d_1}\subset \cdots V_{d_i}\cdots\subset V_{d_s}$ is the subspace $V_{d_i}$ and $H_{d_i}^{\bot}$ is the \emph{$\mathcal{Q}$-orthogonal complement} of $H_{d_i}$, whose rank is $2n+1-d_i$ (resp. $2n-d_i$).
		
		\item[(2)] There is a flag of universal subbundles
		$$0=H_0\subset H_{d_1}\subset \cdots\subset H_{d_s}\subset H_{d_s}^{\bot}\subset\cdots\subset H_{d_1}^{\bot}\subset\mathcal{O}_{X}\times\bar{V}$$
		on $C_n/P_{d_1,\ldots,d_s}$, where $H_{d_i}^{\bot}$ is the \emph{$\Omega$-orthogonal complement} of $H_{d_i}$ with rank $2n-d_i$.
	
There are three important classes of irreducible homogeneous bundles on
$G/P_{d_1,\ldots,d_s}$, where $G$ is a simple Lie group of type $B_n$, $C_n$ or $D_n$.	
		\begin{itemize}
			\item $H_{d_i}/H_{d_{i-1}}$ is an irreducible homogeneous bundle with highest weight $\lambda_{d_i-1}-\lambda_{d_i}~(1\le i\le s)$.
			\item $(H_{d_i}/H_{d_{i-1}})^{\ast}$ is an irreducible homogeneous bundle with highest weight $\lambda_{d_{i-1}+1}-\lambda_{d_{i-1}}~(1\le i\le s)$.
			\item $H_{d_s}^{\bot}/H_{d_s}$ is an irreducible homogeneous bundle with highest weight 	\begin{align*}
				\left\{
				\begin{array}{lll}
					2\lambda_n-\lambda_{n-1}, & \text{if}~X=B_n/P_{d_1,\ldots,d_s}~\text{and}~d_s=n-1;\\
					\lambda_n+\lambda_{n-1}-\lambda_{n-2}, & \text{if}~X=D_n/P_{d_1,\ldots,d_s}~\text{and}~d_s=n-2;\\
					\lambda_{d_s+1}-\lambda_{d_s}, & \text{otherwise}.\\
				\end{array}
				\right.   
			\end{align*}
		\end{itemize}      
	\end{enumerate}  
\end{ex}

In this paper, we denote $E_\lambda$ by the irreducible homogeneous bundle on $G/P_I$ with highest weight $\lambda$. By the above arguments, $\lambda=\sum a_i\lambda_i$, where $a_i\geq0$ for $i\notin I$. We shall see that irreducible homogeneous bundles often have the sharpest theorems and the most detailed formulas associated to them. Unfortunately, many interesting bundles are not irreducible, for example, most tangent bundles on $G/P$. Nevertheless, it is often possible to draw conclusions about an arbitrary homogeneous vector bundle from the irreducible case by considering a filtration (c.f. \cite{snow1989homogeneous} Section $5$). Hence we only consider the irreducible homogeneous vector bundles.

A remarkable result is that $Pic(G/P_I)$ is freely generated by the classes of $L_{\lambda_i}~(i\in I)$, where $L_{\lambda_i}$ is the line bundle with weight $\lambda_i$. The next theorem addresses the question of when a line bundle is spanned or is ample.

\begin{thm}[Borel-Weil, see \cite{snow1989homogeneous} Theorem 6.5]\label{bb}
	Let $L$ be a line bundle on $X=G/P_I$ with weight $\lambda=\sum_{i\in I}n_i\lambda_i$. Then 
	\begin{enumerate}
		\item[(1)] $L$ is spanned at one point of $X$ iff $L$ is spanned at every point of $X$ iff $n_i\ge 0$ for any $i\in I$ (i.e. iff $\lambda$ is dominant).
		\item[(2)] $L$ is ample iff $L$ is very ample iff $n_i>0$ for any $i\in I$ (i.e. iff $\lambda$ is strongly dominant).
		\item[(3)] If $\lambda$ is dominant, then $H^0(X,L)$ is isomorphic to the irreducible $G$-module $V^{\lambda}$.
		\item[(4)] If $\lambda$ is not dominant, then $H^0(X,L)=0$. 
	\end{enumerate}
\end{thm}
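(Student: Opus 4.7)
The strategy is to realize $L$ as the associated line bundle $L = G \times_{P_I} \mathbb{C}_{\lambda}$, where $\mathbb{C}_{\lambda}$ is the one-dimensional $P_I$-module determined by $\lambda = \sum_{i \in I} n_i \lambda_i$ (this is well defined because $\lambda$ is trivial on the semisimple part of $P_I$ and so extends from the maximal torus to all of $P_I$). Under this identification,
$$H^0(X, L) \;\cong\; \{ f : G \to \mathbb{C} \text{ regular} \mid f(g\,p) = \lambda(p)^{-1} f(g) \text{ for all } g \in G,\ p \in P_I \},$$
which is a rational $G$-module under left translation.

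I would prove (3) and (4) in tandem. Assume $s \in H^0(X, L)$ is nonzero. Since the $G$-action is rational, $s$ generates a finite-dimensional $G$-subrepresentation $W$; pick a $B$-eigenvector $s_0 \in W$ of weight $\mu$. Evaluating the transformation rule for $s_0$ at $g = 1$ with $p$ in the maximal torus $T \subset P_I$ forces $\mu = \lambda$, and the existence of a highest-weight vector of weight $\mu$ in a finite-dimensional $G$-module forces $\mu$ to be dominant. This establishes (4) and shows that every irreducible constituent of $H^0(X, L)$ has highest weight exactly $\lambda$. For the construction in (3), fix a suitably normalized lowest-weight covector $\xi \in (V^{\lambda})^{*}$ and send $v \mapsto (g \mapsto \xi(g^{-1} v))$; this is an injective $G$-equivariant map $V^{\lambda} \to H^0(X, L)$, and Frobenius reciprocity gives $\dim \mathrm{Hom}_G(V^{\lambda}, H^0(X, L)) = 1$, so its image is the whole space.

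For (1), the base locus of $L$ is a $G$-invariant closed subset of the homogeneous variety $X$, hence either empty or all of $X$; this gives the point-versus-everywhere equivalence, and equivalence with $n_i \ge 0$ comes from (3)-(4). For (2), when $\lambda$ is strongly dominant the natural morphism
$$\varphi : X \longrightarrow \mathbb{P}\bigl(H^0(X, L)^{*}\bigr) = \mathbb{P}\bigl((V^{\lambda})^{*}\bigr)$$
sends the base point of $X$ to the highest-weight line $[v_{\lambda}]$, and its image is the closed $G$-orbit of $[v_{\lambda}]$. A direct calculation with the weight-space decomposition of $V^{\lambda}$ shows that the stabilizer of $[v_{\lambda}]$ in $G$ equals $P_I$ precisely when every $n_i$ with $i \in I$ is strictly positive, and that $d\varphi$ is then injective on each tangent space; together these give that $\varphi$ is a closed immersion, i.e., $L$ is very ample. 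Conversely, if some $n_i = 0$ the stabilizer of $[v_{\lambda}]$ strictly contains $P_I$, so $L$ descends along the projection $G/P_I \to G/P_{I \setminus \{i\}}$ and cannot be ample.

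The main obstacle I anticipate is the stabilizer-and-tangent computation in (2): translating infinitesimal injectivity of $d\varphi$ into a statement about the action of negative root vectors on $v_{\lambda}$, and verifying that this property is equivalent to strict positivity of every $n_i$. The remaining assertions follow cleanly from highest-weight theory once $H^0(X, L)$ is identified as the induced representation above.
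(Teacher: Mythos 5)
This is a background result that the paper quotes directly from the literature (Snow, Theorem~6.5) and does not prove, so there is no in-paper argument to compare yours against; I can only assess your outline on its own terms. Architecturally it is the standard proof of Borel--Weil and is sound: identifying sections with $P_I$-equivariant regular functions on $G$, extracting an eigenvector to prove (3) and (4), using $G$-invariance of the base locus for (1), and the orbit-of-an-extremal-weight-line argument for (2) are exactly the classical steps, and your Zariski-closed-immersion worry in (2) can be short-circuited in characteristic zero: once the stabilizer of the extremal line is exactly $P_I$, the orbit map $G/P_I\to G\cdot[v]$ is a bijective $G$-equivariant morphism onto a smooth (hence normal) orbit, so it is an isomorphism by Zariski's main theorem, and very ampleness follows.

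The one step that does not work as literally written is the evaluation at $g=1$. The paper's Borel $B$ is built from the \emph{negative} root spaces and sits inside $P_I$, so the identity coset $eP_I$ lies in the \emph{closed} $(B,P_I)$-double coset, not the open one; a left $B$-eigenvector of $H^0(X,L)$ therefore typically vanishes at $g=1$ (already for $\mathcal{O}(1)$ on $\mathbb{P}^1=SL_2/B^-$ the $B^-$-eigenform is the linear form vanishing precisely at $eB^-$), and you cannot read off $\mu=\lambda$ there. Moreover, to conclude that $\mu$ is dominant you need $s_0$ to be a highest-weight vector for the \emph{positive} system, i.e.\ an eigenvector for the Borel opposite to the one contained in $P_I$. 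The fix is standard: take $s_0$ to be a $B^{+}$-eigenvector; its nonvanishing locus is a nonempty open $(B^{+},P_I)$-bi-invariant set, hence contains the open cell $B^{+}P_I$, which does contain $1$, and then both conclusions ($\mu=\lambda$ and $\mu$ dominant) come out correctly. You should state explicitly which Borel you are using and why the eigenvector is nonzero at the evaluation point; with that repair, and with the sign conventions for $\mathbb{C}_{\pm\lambda}$ tracked consistently so that (3) yields $V^{\lambda}$ rather than its dual, the proof goes through.
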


Throughout this paper, we call the line bundle $L_X=\otimes_{i\in I}L_{\lambda_i}$ the \emph{minimal ample line bundle} on $X$. From the above theorem, we know that it is very ample. The following theorem generalizes the theorem of Borel-Weil, in the sense that it describes all cohomology groups of any irreducible homogeneous bundle.
\begin{thm}[Borel-Bott-Weil, see \cite{ottaviani1995rational} Theorem 11.4]\label{bbw} Let $E_\lambda$ be an irreducible homogeneous vector bundle on $G/P.$
	\begin{enumerate}
		\item[(1)] If $\lambda+\rho$ is singular, then $$H^i(G/P,E_\lambda)=0, \forall i\in\mathbb{Z}.$$
		\item[(2)] If $\lambda+\rho$ is regular of index p, then $$H^i(G/P,E_\lambda)=0, \forall i\neq p,$$
		and $$H^p(G/P,E_\lambda)=G_{w(\lambda+\rho)-\rho},$$ where $\rho=\sum\limits_{i=1}^n\lambda_i$ and $w(\lambda+\rho)$ is the unique element of the fundamental Weyl chamber of G which is congruent to $\lambda+\rho$ under the action of the Weyl group.
	\end{enumerate}
\end{thm}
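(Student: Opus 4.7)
The plan is to prove Borel-Bott-Weil in two stages: first on the full flag variety $G/B$ for line bundles $L_\mu$, and then transport the result to $G/P$ via the smooth projection $\pi\colon G/B \to G/P$. On $G/B$, I would argue by strong induction on the length $\ell(w)$ (the minimal number of simple reflections in a reduced expression) of the shortest Weyl element $w$ sending $\mu + \rho$ into the closed dominant chamber. The base case $\ell(w) = 0$, i.e.\ $\mu$ dominant, is Borel-Weil (Theorem \ref{bb}) in degree zero; the vanishing of higher cohomology in this case is obtained from the Atiyah-Bott fixed-point computation of the Euler characteristic, $\chi(G/B, L_\mu) = \dim V^\mu$ via the Weyl character formula, together with the single-degree concentration established in the inductive step below.

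The inductive step uses the Bott lemma for the $\mathbb{P}^1$-fibrations $p_i\colon G/B \to G/P_{\alpha_i}$ attached to each simple root $\alpha_i$. The restriction of $L_\mu$ to a fiber is $\mathcal{O}_{\mathbb{P}^1}(k_i)$ with $k_i = \frac{2(\mu,\alpha_i)}{(\alpha_i,\alpha_i)}$. When $k_i = -1$, equivalently $(\mu + \rho, \alpha_i) = 0$ so that $\mu + \rho$ is $\alpha_i$-singular, every higher direct image $R^j(p_i)_* L_\mu$ vanishes and the Leray spectral sequence forces $H^*(G/B, L_\mu) = 0$; the general singular case in part (1) follows from this by using the Weyl group to move any wall to the wall of a simple root. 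When $k_i \leq -2$, a direct Cech computation on $\mathbb{P}^1$ combined with relative Serre duality identifies $R^1(p_i)_* L_\mu$ (pulled back to $G/B$) with $L_{s_i \cdot \mu}$, where $s_i \cdot \mu = s_i(\mu + \rho) - \rho$ is the dot action; the Leray spectral sequence then produces the shifting identity $H^{j+1}(G/B, L_\mu) \cong H^j(G/B, L_{s_i \cdot \mu})$. Iterating along a reduced expression $w = s_{i_1}\cdots s_{i_p}$ reduces the problem to Borel-Weil in cohomological degree $p = \ell(w)$, yielding part (2) for line bundles on $G/B$.

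Finally, to descend from $G/B$ to $G/P$, apply the Leray spectral sequence to $\pi\colon G/B \to G/P$, whose fiber is $P/B_P$. Since $\lambda$ is dominant for the semisimple part $S_P$ of $P$ (as it is the highest weight of an irreducible $P$-representation), relative Borel-Weil along the fibers gives $\pi_* L_\lambda = E_\lambda$ and $R^j \pi_* L_\lambda = 0$ for $j > 0$, so $H^i(G/P, E_\lambda) \cong H^i(G/B, L_\lambda)$, and the line-bundle result transports verbatim; the Weyl element of $G$ implementing the index-$p$ case on $G/P$ is the coset representative of minimal length in $W_G / W_P$ times the one already produced on $G/B$. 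I expect the main obstacle of the whole argument to be the precise identification $R^1(p_i)_* L_\mu = L_{s_i \cdot \mu}$ in the case $k_i \leq -2$: the dot-action shift must come out exactly, which requires careful bookkeeping of the relative dualizing sheaf of $p_i$ and its interaction with Serre duality on each $\mathbb{P}^1$ fiber. Once this identification is secured, the induction closes cleanly and both parts of the theorem follow.
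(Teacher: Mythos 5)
This theorem is not proved in the paper at all: it is quoted as classical background, with a citation to Ottaviani's lecture notes (Theorem 11.4 there), and everything in Section 3 merely applies it. So there is no internal proof to compare against; what you have written is the standard Demazure--Bott argument (induction over $\mathbb{P}^1$-fibrations $p_i\colon G/B\to G/P_{\alpha_i}$, the shifting identity $H^{j+1}(G/B,L_\mu)\cong H^j(G/B,L_{s_i\cdot\mu})$ for $k_i\le -2$, the $k_i=-1$ vanishing for the singular case, and descent from $G/B$ to $G/P$ via $\pi_*L_\lambda=E_\lambda$ with vanishing higher direct images). That is the right architecture, and the descent step and the $k_i\le-2$ identification are exactly as in the literature.

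Two points in your sketch are genuinely underpowered as written. First, the base case: for dominant $\mu$ you propose to get $H^{>0}(G/B,L_\mu)=0$ from $\chi(G/B,L_\mu)=\dim V^\mu$ together with ``the single-degree concentration established in the inductive step,'' but the inductive step only concentrates cohomology by reducing to the dominant case, so this is circular, and the Euler characteristic alone cannot rule out cancelling contributions in positive degrees. The standard fix is either Kempf/Kodaira-type vanishing for the semi-ample $L_\mu$ (descend to the $G/P$ on which $\mu$ is strictly dominant), or the upward shifting identity $H^j(G/B,L_\mu)\cong H^{j+1}(G/B,L_{s_i\cdot\mu})$ valid when $k_i\ge 0$, iterated along a reduced word for the longest element $w_0$ so that any nonzero class would have to live above $\dim G/B$. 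Second, the singular case: ``using the Weyl group to move any wall to the wall of a simple root'' changes the weight, hence the line bundle, so you must transport cohomology along that move; this again requires the shifting identities (in both the $k_i\le-2$ and $k_i\ge0$ regimes) and a descending induction on, say, the number of positive roots $\alpha$ with $(\mu+\rho,\alpha)<0$, terminating either at a simple-root wall (where the $k_i=-1$ vanishing applies) or at a dominant weight. With those two repairs the argument closes; neither affects how the theorem is used in this paper, where it enters only as a black box in Lemma \ref{lemma1}.
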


\section{Homogeneous ACM bundles on rational homogeneous spaces}
The aim of this paper is to classify irreducible homogeneous ACM vector bundles on rational homogeneous spaces. Let's recall here the definitions:

\begin{defi}
	Let $X\subset \mathbb{P}^N$ be a projective variety and $\mathcal{O}_X(1)$ the corresponding very ample line bundle. We say $E$ is an arithmetically Cohen-Macaulay (ACM) sheaf on $X$ if $h^i(X,E\otimes\mathcal{O}_X(t))=0$ for all $i~(0<i<\dim X)$ and $t\in\mathbb{Z}$.  
\end{defi}

When $X$ is non-singular, ACM sheaves are locally free. For this reason we will speak ACM bundles in this case. Recall a subscheme $X\subseteq\mathbb{P}^N$ is arithmetically Cohen-Macaulay (ACM for short) if its homogeneous coordinate ring $R_X$ is a local Cohen-Macaulay ring.
There is a one-to-one correspondence between ACM sheaves on ACM scheme $X$ and graded maximal Cohen-Macaulay $R_X$-modules sending $E$ to $H_{*}^0(E):=\oplus_{t\in \mathbb{Z}}H^0_{*}(X,E(t))$ (see \cite{Casanellas20005} Proposition 2.1). 

In this paper, we consider ACM bundles on $X=G/P_I\subset\mathbb{P}V$ with respect to general polarizations, i.e. the corresponding line bundles $\mathcal{O}_{X}(1)$ are very ample line bundles $L_{\varpi}$, whose weight are $\varpi=\sum_{i\in I}n_i\lambda_i,~n_i>0$.


\subsection{Proof of Theorem \ref{thm} and Theorem \ref{thm1}}
In this section, we give a combinatorial criterion for an irreducible homogeneous vector bundle E on a rational homogeneous space $G/P_I$ to be ACM, formulated in terms of the weight of the representation attached to $E$. From the Borel-Bott-Weil theorem, we first get the following lemma.

\begin{lem}\label{lemma1}
	Let $E_{\lambda}$ be an irreducible homogeneous vector bundle on $G/P_I$ with highest weight $\lambda$. Then $E_{\lambda}$ is an ACM bundle if and only if one of the following conditions holds for each $t\in \mathbb{Z}$:
	\begin{enumerate}
		\item[(1)] $\lambda+\rho-t\varpi$ is regular of index $0$;
		\item[(2)] $\lambda+\rho-t\varpi$ is regular of index $\dim G/P_I$;
		\item[(3)] $\lambda+\rho-t\varpi$ is singular.
	\end{enumerate}
\end{lem}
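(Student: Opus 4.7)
The plan is to deduce the statement as a direct consequence of the Borel-Bott-Weil theorem (Theorem \ref{bbw}). The key observation is that, up to tensoring with a line bundle, the twist $E_\lambda(-t) = E_\lambda \otimes L_\varpi^{-t}$ is again an irreducible homogeneous bundle. Indeed, since $\varpi = \sum_{i\in I} n_i \lambda_i$ is a weight supported on the marked nodes $I$, every integer power $L_\varpi^{-t}$ remains a line bundle on $G/P_I$ (its weight defines a character of $P_I$), so the tensor product $E_\lambda \otimes L_\varpi^{-t}$ corresponds to the irreducible $P_I$-representation with highest weight $\lambda - t\varpi$. In particular $E_{\lambda - t\varpi} \cong E_\lambda(-t)$ in the sense of the notation fixed in Section 2.3.

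Next I would rewrite the ACM condition in this language. By definition, $E_\lambda$ is ACM if and only if $h^i(X, E_\lambda(t)) = 0$ for every $t\in\mathbb{Z}$ and every $0 < i < \dim X$; replacing $t$ by $-t$, this is equivalent to the vanishing of $H^i(X, E_{\lambda - t\varpi})$ for all such $i$ and $t$. Now I would apply Borel-Bott-Weil to the irreducible bundle $E_{\lambda - t\varpi}$ for each fixed $t$. The theorem says exactly that the cohomology of $E_{\lambda - t\varpi}$ is concentrated in a single degree $p$ (the index of $\lambda + \rho - t\varpi$) whenever $\lambda + \rho - t\varpi$ is regular, and vanishes entirely when $\lambda + \rho - t\varpi$ is singular.

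Combining these two observations finishes the argument: for a fixed $t$, the intermediate cohomology $H^i(X, E_{\lambda - t\varpi})$ for $0 < i < \dim X$ vanishes if and only if either $\lambda + \rho - t\varpi$ is singular, or it is regular with index $p \in \{0, \dim X\}$. Requiring this for all $t\in\mathbb{Z}$ produces precisely conditions $(1)$--$(3)$ of the lemma. There is no real obstacle here beyond bookkeeping of signs and the routine check that $E_\lambda \otimes L_\varpi^{-t}$ is irreducible homogeneous with the correct highest weight; the content of the lemma is just a reformulation of ACM vanishing through Borel-Bott-Weil, and it is this reformulation that will be exploited in the subsequent analysis of the associated datum $T^\lambda_X$.
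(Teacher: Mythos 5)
Your proposal is correct and follows exactly the paper's argument: identify $E_\lambda(-t)$ as the irreducible homogeneous bundle of highest weight $\lambda-t\varpi$ and apply the Borel--Bott--Weil theorem to see that intermediate cohomology vanishes precisely when $\lambda+\rho-t\varpi$ is singular or regular of index $0$ or $\dim G/P_I$. The extra bookkeeping you supply (that $L_\varpi^{-t}$ is a character of $P_I$, so the twist stays irreducible) is exactly the implicit step in the paper's one-line proof.
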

\begin{proof}
	By definition, $E_{\lambda}$ is an ACM bundle is equivalent to say that any twist $E_{\lambda}(-t)$ of $E_{\lambda}$ has at most cohomology concentrated in zero or highest degree. Notice that the highest weight of $E_{\lambda}(-t)$ is $\lambda-t\varpi$.
	Hence we complete the proof immediately by the Borel-Bott-Weil theorem.
\end{proof}

Next, We examine the three conditions in Lemma \ref{lemma1} one by one. Recall that a weight $\omega$ is called singular if there exists $\alpha \in \Phi_{G}^{+}$ such that $(\omega,\alpha)=0$. $\omega$ is regular of index $p$ if $\omega$ is not singular and
there are exactly $p$ roots $\alpha_{1},\ldots,\alpha_{p} \in \Phi_{G}^{+}$ such that $(\omega,\alpha_{i})<0$. Let 
$$\Phi_{G/P_I}^{+}:=\{\alpha\in\Phi^{+}_{G}\ |\ (\varpi,\alpha)\neq0\}.$$
For an irreducible homogeneous bundle $E_{\lambda}$ on $G/P_I$. we define its associated datum
$$T^{\lambda}_{G/P_I}:=\left\{\frac{(\lambda+\rho,\alpha)}{(\varpi,\alpha)} \ |\ \alpha\in\Phi_{G/P_I}^{+}\right\}.$$
And we set
$$M^{\lambda}_{G/P_I}=\max\{t|t\in T^{\lambda}_{G/P_I}\}~\text{and}~m^{\lambda}_{G/P_I}=\min\{t|t\in T^{\lambda}_{G/P_I}\}.$$

\begin{lem}\label{m}
	$\lambda+\rho-t\varpi$ is regular of index $0$ if and only if $t<m^{\lambda}_{G/P_I}$. 
\end{lem}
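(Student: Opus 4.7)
The plan is to unpack the condition \emph{regular of index $0$} into the inequalities $(\lambda+\rho-t\varpi,\alpha)>0$ for every $\alpha\in\Phi_G^+$, and then to split $\Phi_G^+$ into $\Phi_{G/P_I}^+$ and its complement $\Phi_G^+\setminus\Phi_{G/P_I}^+$ (the positive roots on which $\varpi$ vanishes). These two families contribute conditions of very different flavours, so I would treat them separately.

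First I would handle $\alpha\in\Phi_G^+\setminus\Phi_{G/P_I}^+$, on which the variable $t$ drops out of the inequality. Using the identity $(\lambda_i,\alpha_j)=\tfrac{(\alpha_j,\alpha_j)}{2}\delta_{ij}$ (which follows directly from the definition of fundamental weights), the hypothesis $\varpi=\sum_{i\in I}n_i\lambda_i$ with $n_i>0$ forces such an $\alpha$ to be a non-negative integer combination of simple roots $\alpha_j$ with $j\notin I$; equivalently, $\alpha$ is a positive root of the Levi factor $S_{P_I}$. Since $\lambda=\sum a_i\lambda_i$ satisfies $a_i\ge 0$ for $i\notin I$, the same identity gives $(\lambda,\alpha)\ge 0$, and the strong dominance of $\rho$ gives $(\rho,\alpha)>0$. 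Hence $(\lambda+\rho-t\varpi,\alpha)=(\lambda+\rho,\alpha)>0$ automatically, independently of $t$.

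Next I would handle $\alpha\in\Phi_{G/P_I}^+$, on which the analogous coefficient computation yields $(\varpi,\alpha)>0$. The required inequality $(\lambda+\rho-t\varpi,\alpha)>0$ is then equivalent to $t<\tfrac{(\lambda+\rho,\alpha)}{(\varpi,\alpha)}$; demanding this for every such $\alpha$ amounts, by definition of the minimum over the finite set $T^{\lambda}_{G/P_I}$, to $t<m^{\lambda}_{G/P_I}$. Combining the two cases yields the claimed equivalence. The only genuine subtlety is the first step, namely verifying that the roots in $\Phi_G^+\setminus\Phi_{G/P_I}^+$ impose no constraint on $t$, so that the bound is really controlled by $T^{\lambda}_{G/P_I}$ alone; once this is in hand, the remaining argument is bookkeeping with the Killing form and the definition of $m^{\lambda}_{G/P_I}$.
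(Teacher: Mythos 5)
Your proof is correct and follows essentially the same route as the paper's: split $\Phi_G^+$ into $\Phi_{G/P_I}^+$ and its complement, observe that roots in the complement give $t$-independent positive pairings, and reduce the remaining inequalities to $t<m^{\lambda}_{G/P_I}$. You actually supply more detail than the paper on the key point it leaves implicit, namely why $(\lambda+\rho,\alpha)>0$ on the complement (dominance of $\lambda$ on the Levi part plus strong dominance of $\rho$) and why $(\varpi,\alpha)>0$ rather than merely nonzero on $\Phi_{G/P_I}^+$.
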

\begin{proof}
	First of all, from the definition of $\Phi_{G/P_I}^{+}$, we see that for any positive root $\alpha\in \Phi^{+}_{G}\backslash\Phi_{G/P_I}^{+}$,
	$$(\lambda+\rho-t\varpi,\alpha)=(\lambda+\rho,\alpha)>0.$$
	Hence $\lambda+\rho-t\varpi$ is regular of index $0$ if and only if $(\lambda+\rho-t\varpi,\alpha)>0$ for any positive root $\alpha\in \Phi_{G/P_I}^{+}$, which means that 
	$$t<\frac{(\lambda+\rho,\alpha)}{(\varpi,\alpha)}~\text{for any}~\alpha\in \Phi_{G/P_I}^{+},$$
	i.e., $t<m^{\lambda}_{G/P_I}$. 
\end{proof}

\begin{rem}\label{min}
	Suppose $\varpi=\sum_{i\in I}n_i\lambda_i~(n_i>0)$ and 
	 $\lambda=\sum_{i=1}^{n}a_i\lambda_i$, where $n$ is the rank of $G$. Then we claim that 
	\begin{align}
		m^{\lambda}_{G/P_I}=\min_{i\in I}\{\frac{a_i+1}{n_i}\}.   
	\end{align}
	\begin{proof} First of all, when $\alpha=\alpha_k~(k\in I)$, we have
		$$\frac{(\lambda+\rho,\alpha)}{(\varpi,\alpha)}=\frac{(a_k+1)(\lambda_k,\alpha_k)}{n_k(\lambda_k,\alpha_k)}=\frac{a_k+1}{n_k}.$$
		On the other hand, for any $\alpha\in\Phi_{G/P_I}^{+}$, we expand $\frac{(\lambda+\rho,\alpha)}{(\varpi,\alpha)}$ as follows.
		\begin{align*}
			\frac{(\lambda+\rho,\alpha)}{(\varpi,\alpha)}
			&= \frac{(\sum_{i=1}^{n}(a_i+1)\lambda_i,\alpha)}{(\varpi,\alpha)} \\
			&=\frac{(\sum_{i\in I}(a_i+1)\lambda_i,\alpha)+(\sum_{i\notin I}(a_i+1)\lambda_i,\alpha)}{(\sum_{i\in I}n_i\lambda_i,\alpha)}\\
			&\ge\min_{i\in I}\{\frac{a_i+1}{n_i}\}.\\
			&(\text{since}~(\lambda_i,\alpha)\ge 0~\text{and}~a_i\ge 0 ~\text{for all}~i\notin I)      
		\end{align*}
		Thus, we get that $m^{\lambda}_{G/P_I}=\min_{i\in I}\{\frac{a_i+1}{n_i}\}$.
	\end{proof}
\end{rem}

\begin{lem}\label{M}
	$\lambda+\rho-t\varpi$ is regular of index $\dim G/P_I$ if and only if $t>M^{\lambda}_{G/P_I}$. 
\end{lem}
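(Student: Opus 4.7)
The plan is to mirror the argument of Lemma \ref{m} essentially verbatim, with the roles of positivity and negativity swapped. The key ingredients are: (i) the identity $\dim G/P_I = |\Phi_{G/P_I}^+|$, (ii) the sign analysis on $\Phi_G^+ \setminus \Phi_{G/P_I}^+$ already established in Lemma \ref{m}, and (iii) the characterization of when $(\lambda+\rho-t\varpi,\alpha)<0$ holds simultaneously for every $\alpha \in \Phi_{G/P_I}^+$.

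First I would verify the dimension count $\dim G/P_I = |\Phi_{G/P_I}^+|$. From (\ref{para}), $\Phi_{P_I}^+$ is the set of positive roots in the subsystem generated by $\{\alpha_i : i \notin I\}$, so $\Phi_G^+\setminus \Phi_{P_I}^+$ consists precisely of positive roots involving some $\alpha_i$ with $i \in I$. Since $\varpi = \sum_{i\in I} n_i \lambda_i$ with $n_i>0$ and $(\lambda_i,\alpha)\geq 0$ for every positive root, these are exactly the roots with $(\varpi,\alpha)>0$, i.e. $\Phi_{G/P_I}^+$. Hence $\dim G/P_I$ equals $|\Phi_{G/P_I}^+|$, the maximal possible index contributed by this set.

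Next I would reuse the observation from the proof of Lemma \ref{m}: for $\alpha \in \Phi_G^+\setminus\Phi_{G/P_I}^+$, the pairing $(\lambda+\rho-t\varpi,\alpha) = (\lambda+\rho,\alpha)$ is strictly positive, because such an $\alpha$ is a nontrivial nonnegative combination of the simple roots $\alpha_i$ with $i\notin I$, and the corresponding coefficients $a_i+1$ of $\lambda+\rho$ are all strictly positive. Therefore these roots contribute neither zeros nor negative pairings, and both the regularity and the index of $\lambda+\rho-t\varpi$ are determined solely by its behaviour on $\Phi_{G/P_I}^+$.

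Finally, for $\alpha \in \Phi_{G/P_I}^+$ we have $(\varpi,\alpha)>0$, so $(\lambda+\rho-t\varpi,\alpha)<0$ is equivalent to $t > (\lambda+\rho,\alpha)/(\varpi,\alpha)$. Requiring this for every $\alpha \in \Phi_{G/P_I}^+$ simultaneously is equivalent to $t > M^{\lambda}_{G/P_I}$, and when this strict inequality holds all such pairings are strictly negative, giving $|\Phi_{G/P_I}^+| = \dim G/P_I$ negative pairings and no zero pairings; conversely any $t$ realizing this maximal index must exceed every element of $T^{\lambda}_{G/P_I}$. I do not anticipate any real obstacle; the only mild subtlety is that $M^{\lambda}_{G/P_I}$ is attained in $T^{\lambda}_{G/P_I}$, which is why the bound must be strict ($t > M^{\lambda}_{G/P_I}$ rather than $\geq$) in order to rule out a zero pairing and thereby guarantee regularity.
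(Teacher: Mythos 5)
Your proposal is correct and follows essentially the same route as the paper: establish $\dim G/P_I=|\Phi_{G/P_I}^{+}|$ via the identification $\Phi_{G/P_I}^{+}=\Phi^{+}_{G}\setminus\Phi^{+}_{P_I}$, note that roots outside $\Phi_{G/P_I}^{+}$ pair strictly positively with $\lambda+\rho-t\varpi$, and reduce the index condition to $(\lambda+\rho-t\varpi,\alpha)<0$ for all $\alpha\in\Phi_{G/P_I}^{+}$, i.e. $t>M^{\lambda}_{G/P_I}$. Your added remark on why the inequality must be strict is a correct (if implicit in the paper) observation.
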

\begin{proof}
	According to Equation (\ref{para}), we have 
	$$\dim G/P_I=\dim \mathfrak{g}/\mathfrak{p}_I=|\Phi^{+}_{G}\backslash\Phi^{+}_{P_I}|,$$
	where $\Phi^{+}_{P_I}$ is the subset of $\Phi^{+}_{G}$ generated by $\alpha_i~(i\notin I)$. It's not hard to see that $\Phi^{+}_{P_I}$ is the complement of $\Phi_{G/P_I}^{+}$ in $\Phi^{+}_{G}$. Hence $\dim G/P_I=|\Phi_{G/P_I}^{+}|$.
	
	Since $(\lambda+\rho-t\varpi,\alpha)>0$ for any $\alpha\in \Phi^{+}_{G}\backslash\Phi_{G/P_I}^{+}$, $\lambda+\rho-t\varpi$ is regular of index $\dim G/P_I=|\Phi_{G/P_I}^{+}|$ is equivalent to saying that $(\lambda+\rho-t\varpi,\alpha)<0$ for any $\alpha\in \Phi_{G/P_I}^{+}$, which means that 
	$$t>\frac{(\lambda+\rho,\alpha)}{(\varpi,\alpha)}~\text{for any}~\alpha\in \Phi_{G/P_I}^{+},$$
	i.e., $t>M^{\lambda}_{G/P_I}$. 
\end{proof}

\begin{lem}\label{L}
	$\lambda+\rho-t\varpi$ is singular if and only if $t\in T^{\lambda}_{G/P_I}$. 
\end{lem}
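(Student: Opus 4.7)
The plan is to unpack the definition of singularity and split the sum over positive roots according to the decomposition $\Phi_G^+ = \Phi_{G/P_I}^+ \sqcup (\Phi_G^+ \setminus \Phi_{G/P_I}^+)$. By definition, $\lambda+\rho-t\varpi$ is singular iff there exists some $\alpha\in\Phi_G^+$ with $(\lambda+\rho-t\varpi,\alpha)=0$, so I want to show that any such $\alpha$ must in fact lie in $\Phi_{G/P_I}^+$, and that in that case the vanishing is equivalent to $t=(\lambda+\rho,\alpha)/(\varpi,\alpha)$.

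First I will handle the roots $\alpha\in\Phi_G^+\setminus\Phi_{G/P_I}^+$. By definition these satisfy $(\varpi,\alpha)=0$, hence $(\lambda+\rho-t\varpi,\alpha)=(\lambda+\rho,\alpha)$, so $t$ plays no role. I claim that in fact $(\lambda+\rho,\alpha)>0$ for such roots, which will rule them out as obstructions. Since $\varpi=\sum_{i\in I}n_i\lambda_i$ with $n_i>0$, the condition $(\varpi,\alpha)=0$ together with positivity of $\alpha$ forces $\alpha$ to be a nonnegative integer combination of the simple roots $\alpha_j$ with $j\notin I$ (this is precisely the description of $\Phi_{P_I}^+$ recalled in the proof of Lemma \ref{M}). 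Writing $\lambda+\rho=\sum_j(a_j+1)\lambda_j$, the defining relations $(\lambda_j,\alpha_i)=\delta_{ij}(\alpha_i,\alpha_i)/2$ give $(\lambda+\rho,\alpha_j)=(a_j+1)(\alpha_j,\alpha_j)/2$, which is strictly positive for $j\notin I$ because $a_j\geq 0$ (recall $\lambda=\sum a_i\lambda_i$ has $a_i\geq 0$ for $i\notin I$, as it is a highest weight of an irreducible $P_I$-module). Summing with the nonnegative coefficients of $\alpha$ yields $(\lambda+\rho,\alpha)>0$, as desired.

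Next I will treat the roots $\alpha\in\Phi_{G/P_I}^+$. The same decomposition argument, together with $n_i>0$ for $i\in I$, shows $(\varpi,\alpha)>0$ (not merely nonzero) for such $\alpha$. Hence $(\lambda+\rho-t\varpi,\alpha)=0$ is equivalent to
\[
t=\frac{(\lambda+\rho,\alpha)}{(\varpi,\alpha)},
\]
which is precisely an element of $T^{\lambda}_{G/P_I}$. Combining the two cases, $\lambda+\rho-t\varpi$ is singular iff some $\alpha\in\Phi_{G/P_I}^+$ produces the value $t$ in $T^{\lambda}_{G/P_I}$, which finishes the equivalence.

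There is no real obstacle here; the only subtle point is verifying that the ``discarded'' roots $\alpha\in\Phi_G^+\setminus\Phi_{G/P_I}^+$ cannot contribute to singularity, which requires using the hypothesis that $\lambda$ is a dominant weight on the simple factors of $P_I$ (i.e., $a_j\geq 0$ for $j\notin I$) together with the strong dominance of $\rho$. This is exactly the same type of positivity computation used in Lemmas \ref{m} and \ref{M}, so the argument mirrors those proofs.
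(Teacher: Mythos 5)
Your proof is correct and follows essentially the same route as the paper: the paper's argument likewise reduces singularity to the existence of a root $\alpha\in\Phi_{G/P_I}^{+}$ with $(\lambda+\rho-t\varpi,\alpha)=0$ and solves for $t$, relying on the observation (already made in the proof of Lemma \ref{m}) that roots outside $\Phi_{G/P_I}^{+}$ satisfy $(\lambda+\rho-t\varpi,\alpha)=(\lambda+\rho,\alpha)>0$ and so cannot cause singularity. Your write-up simply makes that discarding step, and the positivity of $(\varpi,\alpha)$ on $\Phi_{G/P_I}^{+}$, fully explicit.
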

\begin{proof}
	If $\lambda+\rho-t\sum_{i\in I}\lambda_i$ is singular, there exists a positive root $\alpha$ in $\Phi_{G/P_I}^{+}$ such that
	$$0=(\lambda+\rho-t\varpi,\alpha)=(\lambda+\rho,\alpha)-t.(\varpi,\alpha).$$
	Since $\alpha\in\Phi_{G/P_I}^{+}$, $(\varpi,\alpha)$ is not equal to zero. Thus, $t=\frac{(\lambda+\rho,\alpha)}{(\varpi,\alpha)}$. By the definition of $T^{\lambda}_{G/P_I}$, $t\in T^{\lambda}_{G/P_I}$. Conversely, if $t\in T^{\lambda}_{G/P_I}$, there exists a positive root $\alpha\in\Phi_{G/P_I}^{+}$ such that $t=\frac{(\lambda+\rho,\alpha)}{(\varpi,\alpha)}$. Then,
	$$0=(\lambda+\rho,\alpha)-t.(\varpi,\alpha)=(\lambda+\rho-t\varpi,\alpha).$$
	Therefore, $\lambda+\rho-t\varpi$ is singular.    
\end{proof}

By the above lemmas, we conclude Theorem \ref{thm}
immediately. As a consequence of Theorem \ref{thm}, we obtain the following corollary.
\begin{cor}\label{c}
 There are only finitely many irreducible homogeneous ACM bundles on $X=G/P_I$ up to twist by $\mathcal{O}_{X}(1)$.
\end{cor}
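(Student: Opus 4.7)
The plan is to use Theorem \ref{thm} to convert the ACM condition into effective bounds on the coefficients of $\lambda=\sum a_i\lambda_i$, modulo the translation $\lambda\mapsto\lambda+\varpi$ induced by twisting by $\mathcal{O}_X(1)$. A direct calculation gives $T^{\lambda+t\varpi}_X=T^\lambda_X+t$ for all $t\in\mathbb{Z}$, so the twist acts on the associated datum as integer translation, and it suffices to show that after a suitable integer twist the representative $\lambda$ is confined to a bounded region of the weight lattice.

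First I would observe that, by Theorem \ref{thm}, if $E_\lambda$ is ACM then the interval $[m^\lambda_X,M^\lambda_X]$ contains no more integers than $|T^\lambda_X|\le|\Phi^+_X|=\dim X$, hence $M^\lambda_X-m^\lambda_X\le\dim X$. Using the twist-translation above, I would normalize the representative so that $m^\lambda_X\in[0,1)$, which forces $T^\lambda_X\subset[0,\dim X+1)$. It then remains to bound every $a_i$ in the normalized weight. For $i\in I$, the simple root $\alpha_i$ lies in $\Phi^+_X$, and $\frac{a_i+1}{n_i}=\frac{(\lambda+\rho,\alpha_i)}{(\varpi,\alpha_i)}\in T^\lambda_X$ is bounded, so $a_i$ is bounded. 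For $i\notin I$, I would select a positive root $\alpha\in\Phi^+_X$ whose expansion in simple roots has positive coefficient at $\alpha_i$; such an $\alpha$ exists because the Dynkin diagram of the simple group $G$ is connected, so the sum of simple roots along a path from node $i$ to any node in $I$ (with the usual adjustments for the non-simply-laced cases) is a root of the required form. Expanding both sides of $\frac{(\lambda+\rho,\alpha)}{(\varpi,\alpha)}\le M^\lambda_X$ in the fundamental weights via $(\lambda_k,\alpha_j)=\delta_{kj}(\alpha_j,\alpha_j)/2$, the normalization ensures that every summand $(a_k+1)(\lambda_k,\alpha)$ is non-negative, so the single term proportional to $(a_i+1)$ is bounded above in terms of $\dim X$ and the fixed data of $\varpi$ and $\alpha$. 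This bounds $a_i$ and leaves only finitely many candidates for the normalized $\lambda$.

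The main obstacle is producing the auxiliary root $\alpha\in\Phi^+_X$ with positive coefficient at $\alpha_i$ for each $i\notin I$; once this geometric input is extracted from connectedness of the Dynkin diagram, the rest reduces to elementary inequalities using the non-negativity afforded by the normalization $m^\lambda_X\in[0,1)$.
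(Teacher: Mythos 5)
Your argument is correct and follows essentially the same route as the paper: normalize by a twist so that $m^{\lambda}_{X}$ lies in a unit interval, deduce $M^{\lambda}_{X}<\dim X+1$ from Theorem \ref{thm} together with $|T^{\lambda}_{X}|\le|\Phi_{X}^{+}|=\dim X$, and then bound the coefficients $a_i$ by pairing $\lambda+\rho$ against an auxiliary root of $\Phi_{X}^{+}$. The only difference is cosmetic: where you construct a separate path-root for each $i\notin I$ from connectedness of the Dynkin diagram, the paper simply takes the single root $\alpha=\sum_{i=1}^{n}\alpha_{i}$, which lies in $\Phi_{X}^{+}$ and pairs positively with every fundamental weight at once, so that one inequality bounds all the $a_i$ simultaneously.
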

\begin{proof}
	Let $E_{\lambda}$ be an irreducible homogeneous vector bundle with $\lambda=\sum_{i=1}^{n}a_{i}\lambda_{i}$. After twisting with an appropriate power of $\mathcal{O}_{X}(1)$, we can assume that $a_i\ge 0$ for any $i\in I$ and $0<m^{\lambda}_{G/P_I}\le 1$ by Remark \ref{min}. Since $\dim G/P_I=|\Phi_{G/P_I}^{+}|$, $T^{\lambda}_{G/P_I}$ has at most $\dim G/P_I$ different elements. Thus if $M^{\lambda}_{G/P_I}\ge\dim G/P_I+1$, there exists an integer $l \in[m^{\lambda}_{G/P_I},M^{\lambda}_{G/P_I}]$ such that $n_{l}=0$. By the main theorem, $E_{\lambda}$ is not an ACM bundle. Therefore, if $E_{\lambda}$ is an ACM bundle, then $M^{\lambda}_{G/P_I}<\dim G/P_I+1$. Take $\alpha=\sum_{i=1}^{n}\alpha_{i}$, where $\alpha_{i}$ is the $i$-th simple root of $G$. It's obvious that $\alpha\in\Phi_{G/P_I}^{+}$ and $(\lambda_i,\alpha)>0$ for any $1\le i\le n$. It follows that $\frac{(\lambda+\rho,\alpha)}{(\varpi,\alpha)}$ is a linear combination of $a_{i}$ with positive rational coefficients. Since  $\frac{(\lambda+\rho,\alpha)}{(\varpi,\alpha)}$ is less than $\dim G/P_I+1$ and $a_{i}\geq0$, 
	there exist only a finite number of choices for $a_{i}$. Therefore, there exist only a finite number of irreducible homogeneous ACM bundles.
\end{proof}

Among ACM bundles, there are bundles $E$ such that $H^0_{*}(X,E)$ has the highest possible number of generators in degree $0$. Such bundles are called Ulrich bundles. There are several equivalent definitions of Ulrich bundles using linear resolutions or cohomologies. We are going to use the original cohomology characterization.
\begin{defi}
Let $X\subset \mathbb{P}^N$ be a projective variety and $\mathcal{O}_X(1)$ the corresponding very ample line bundle. We say $E$ is an Ulrich bundle on $X$ if
\begin{align*}
	\left\{
	\begin{array}{lll}
		h^i(X,E\otimes\mathcal{O}_X(-t))=0, & \text{for all}~\le i \le\dim X~\text{and all}~t\in \mathbb{Z};\\
		h^0(X,E\otimes\mathcal{O}_X(-t))=0, & \text{for}~t\ge 1;\\
		h^{\dim X}(X,E\otimes\mathcal{O}_X(-t))=0, & \text{for}~t\le \dim X.\\
	\end{array}
	\right.   
\end{align*}
\end{defi}

It's worth noting that the property of being Ulrich is not maintained after tensoring with line bundles. Using the associated datum of irreducible homogeneous vector bundles, we come to prove Theorem \ref{thm1}.

\textbf{Proof of Theorem \ref{thm1}:}
Let $E_{\lambda}$ be an irreducible homogeneous Ulrich bundle on $X=G/P_I$.
First we claim that $m^{\lambda}_{G/P_I}\le 1$ and $M^{\lambda}_{G/P_I}\ge \dim X$. This is because if $m^{\lambda}_{G/P_I}>1$, then $h^0(X,E_{\lambda}\otimes\mathcal{O}_X(-1))$ would not be equal to $0$ by Lemma \ref{m} and the Borel-Bott-Weil theorem, contrary to the definition of Ulrich bundles. Hence $m^{\lambda}_{G/P_I}\le 1$. Similarly, from Lemma \ref{M}, we can deduce that $M^{\lambda}_{G/P_I}\ge \dim X$. On the other hand, since $E_{\lambda}$ is an ACM bundle and $T^{\lambda}_{G/P_I}$ has at most $\dim X$ different elements, we have 
\[
0<m^{\lambda}_{G/P_I}\le 1~\text{and}~\dim X\le M^{\lambda}_{G/P_I}<\dim X+1
\] 
by Theorem \ref{thm}. Further, since all integers between $1$ and $\dim X$ appear as entries of $T^{\lambda}_{G/P_I}$, which has at most $\dim X$ different entries, we have $T^{\lambda}_{X}=\{1,2,\ldots,\dim X\}$.

 Conversely, if $T^{\lambda}_{X}=\{1,2,\ldots,\dim X\}$, then $m^{\lambda}_{G/P_I}=1$ and $ M^{\lambda}_{G/P_I}=\dim X$. By Theorem \ref{thm}, $E_{\lambda}$ is an ACM bundle. Moreover, since $m^{\lambda}_{G/P_I}=1$, according to the Borel-Bott-Weil theorem and Lemma \ref{m}, we have $H^0(X, E_{\lambda}(-t))=0$ for any $t\ge 1$. Similarly, since $M^{\lambda}_{G/P_I}=\dim X$, we have $H^{\dim X}(X, E_{\lambda}(-t))=0$ for any $t\le \dim X$ due to Lemma \ref{M}. Therefore, $E_{\lambda}$ is an Ulrich bundle.

\begin{cor}There are only finitely many irreducible homogeneous Ulrich bundles on $X=G/P_I$.
\end{cor}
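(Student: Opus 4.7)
The plan is to deduce finiteness of the Ulrich bundles directly from Corollary \ref{c} (finiteness of ACM bundles up to twist), by showing that the Ulrich property is rigid under twisting and so pins down a single representative in each twist orbit.

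First I would verify that every irreducible homogeneous Ulrich bundle is in particular ACM. By Theorem \ref{thm1}, an Ulrich bundle $E_\lambda$ satisfies $T^\lambda_X=\{1,2,\ldots,\dim X\}$, so $m^\lambda_X=1$, $M^\lambda_X=\dim X$, and every integer in $[1,\dim X]$ appears as an entry of $T^\lambda_X$; this is exactly the criterion of Theorem \ref{thm}. Hence the set of isomorphism classes of irreducible homogeneous Ulrich bundles on $X$ is contained in the set of irreducible homogeneous ACM bundles.

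Next I would analyze how the associated datum transforms under twisting by the polarization. Since $E_\lambda\otimes\mathcal{O}_X(t)$ has highest weight $\lambda+t\varpi$, for each $\alpha\in\Phi^{+}_X$ one has
\[
\frac{(\lambda+t\varpi+\rho,\,\alpha)}{(\varpi,\alpha)}\;=\;\frac{(\lambda+\rho,\alpha)}{(\varpi,\alpha)}+t,
\]
so $T^{\lambda+t\varpi}_X=T^\lambda_X+t$ as subsets of $\mathbb{Q}$. Therefore, if $E_\lambda$ is Ulrich, then $T^{\lambda+t\varpi}_X=\{1+t,\ldots,\dim X+t\}$ agrees with $\{1,\ldots,\dim X\}$ only when $t=0$. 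Consequently, each twist-equivalence class of irreducible homogeneous ACM bundles contains at most one Ulrich bundle, and combining this with Corollary \ref{c} yields the finiteness of irreducible homogeneous Ulrich bundles.

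I do not anticipate a serious obstacle; the key point is the simple translation formula for $T^\lambda_X$ under twisting, which turns the Ulrich condition into a rigid normalization inside each twist orbit. As a backup, one can argue directly without invoking Corollary \ref{c}: $m^\lambda_X=1$ together with Remark \ref{min} forces $a_i\geq n_i-1\geq 0$ for $i\in I$; the upper bound $M^\lambda_X=\dim X$ applied with the test root $\alpha=\sum_{j=1}^{n}\alpha_j$ (exactly as in the proof of Corollary \ref{c}, using $(\lambda_i,\alpha)=(\alpha_i,\alpha_i)/2>0$) bounds every coefficient $a_i$ from above; and dominance gives $a_i\geq 0$ for $i\notin I$. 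Only finitely many weights $\lambda$ satisfy all these constraints.
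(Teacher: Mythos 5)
Your proposal is correct, and your primary argument takes a genuinely different route from the paper's. The paper simply notes that an Ulrich bundle has $m^{\lambda}_{X}=1$ and $M^{\lambda}_{X}=\dim X$ and then reruns the proof of Corollary \ref{c} verbatim: the test root $\alpha=\sum_{i=1}^{n}\alpha_i$ turns the bound $M^{\lambda}_{X}<\dim X+1$ into an upper bound on a positive linear combination of the coefficients $a_i$, which together with $a_i\geq 0$ leaves finitely many weights. Your main argument instead uses Corollary \ref{c} as a black box and adds the translation formula $T^{\lambda+t\varpi}_{X}=T^{\lambda}_{X}+t$, which shows that the normalization $T^{\lambda}_{X}=\{1,\ldots,\dim X\}$ can hold for at most one member of each twist orbit; since Ulrich implies ACM (which you correctly verify via Theorems \ref{thm} and \ref{thm1}) and there are finitely many ACM twist orbits, finiteness follows. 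This is arguably cleaner in that it isolates the rigidity of the Ulrich condition under twisting and avoids repeating the coefficient-bounding computation, at the cost of being less self-contained; your backup argument (bounding the $a_i$ directly from $m^{\lambda}_{X}=1$, $M^{\lambda}_{X}=\dim X$, and the test root) is essentially identical to what the paper does.
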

\begin{proof}
From Theorem \ref{thm1}, we see that if $E_{\lambda}$ is an irreducible homogeneous Ulrich bundle on $X$, then $m^{\lambda}_{G/P_I}=1$ and $ M^{\lambda}_{G/P_I}=\dim X$. Hence we conclude the proof by applying almost verbatim the proof of Corollary \ref{c}.
\end{proof}
\subsection{The associated datum of ACM bundles on $G/P_I$}
From now on, for convenience, we always suppose that $X=G/P_I\subset\mathbb{P}V$ in its \emph{minimal embedding}, i.e. the corresponding line bundle $\mathcal{O}_{X}(1)$ is the minimal ample line bundle $L_X$, whose weight is $\sum_{i\in I}\lambda_i$.
\subsubsection{The associated datum for classical type}
In this section, we give a concrete description of the associated datum $T^{\lambda}_{G/P_I}$ on $G/P_I$, where $G$ is a classical Lie group of type $A$, $B$, $C$ or $D$. From now, we write $\lambda=a_1\lambda_1+\cdots+a_n\lambda_n$ and 
$I=\{d_1,\ldots,d_s\}\subset\{1,\ldots,n\}$. (By convention we set $d_0=0$ and $d_{s+1}=n$.)

Let us start with the positive roots and fundamental weights of Lie groups of types $B_n$, $C_n$ and $D_n$, which can be found in the appendix of \cite{carter2005lie}.

\begin{lem} \label{fund} 
	We define \[\mathfrak{e}=\left\{\begin{matrix}\frac{1}{2}& \text{if G is of type B},\\
		1& \text{if G is of type C},\\
		0& \text{if G is of type D}.\\
	\end{matrix}\right.\]
	For Lie algebras of types $B_n$, $C_n$ and $D_n$, we can take orthogonal bases of the $\mathbb{R}$-vector space spanned by the vectors corresponding to the simple roots of these Lie algebras such that the positive roots are $$\Phi^+_B =\{e_i+e_j\}_{i<j}\cup\{e_i-e_j\}_{i<j}\cup\{e_i\}_i,$$
	$$\Phi^+_C =\{e_i+e_j\}_{i\leq j}\cup\{e_i-e_j\}_{i<j},$$
	$$\Phi^+_D =\{e_i+e_j\}_{i<j}\cup\{e_i-e_j\}_{i<j}.$$
	The fundamental weights are $$\lambda_i^{B,C,D}=e_1+\dots+e_i~\text{for}~i\leq n-2+2\mathfrak{e}$$ and
	$$\lambda_n^{B,C}=\mathfrak{e}(e_1+\dots+e_{n-1}+e_n),$$
	$$\lambda_{n-1}^D=\frac{1}{2}(e_1+\dots+e_{n-1}-e_n),$$
	$$\lambda_n^{D}=\frac{1}{2}(e_1+\dots+e_{n-1}+e_n).$$
	$$So ~\rho=(n+\mathfrak{e}-1)e_1+(n+\mathfrak{e}-2)e_2+\dots+\mathfrak{e}e_n.$$
\end{lem}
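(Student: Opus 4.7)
The plan is to derive the formulas by realizing each classical Lie algebra in its standard matrix model and computing roots and fundamental weights directly. First I would present $\mathfrak{so}_{2n+1}$ (type $B_n$), $\mathfrak{sp}_{2n}$ (type $C_n$), and $\mathfrak{so}_{2n}$ (type $D_n$) as matrix subalgebras of $\mathfrak{gl}$ preserving the standard symmetric or skew-symmetric bilinear form given by an anti-diagonal matrix, so that the Cartan subalgebra $\mathfrak{h}$ is taken to be the traceless diagonal matrices of the form $\mathrm{diag}(h_1,\ldots,h_n,0,-h_n,\ldots,-h_1)$ in type $B$ and $\mathrm{diag}(h_1,\ldots,h_n,-h_n,\ldots,-h_1)$ in types $C, D$. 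Declaring $e_i\in\mathfrak{h}^{\ast}$ to be the linear functional $e_i(h)=h_i$ then produces the desired orthogonal basis (orthogonality with respect to the Killing form reduces to that of the standard inner product on $\mathbb{R}^n$, up to a common positive scalar which is irrelevant for the ratios appearing in $T_X^\lambda$).

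Next, a direct eigenvector computation for the adjoint action of $\mathfrak{h}$ on the Lie algebra yields the complete list of roots; fixing the positive system via the lexicographic order $e_1>e_2>\cdots>e_n>0$ gives exactly the three sets $\Phi_B^+$, $\Phi_C^+$, $\Phi_D^+$ claimed. In particular the simple roots are $\alpha_i=e_i-e_{i+1}$ for $1\le i\le n-1$, together with a last simple root which equals $e_n$ in type $B$, $2e_n$ in type $C$, and $e_{n-1}+e_n$ in type $D$. This is precisely where the parameter $\mathfrak{e}$ distinguishes the three cases.

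Then I would determine the fundamental weights by solving the defining linear system $\tfrac{2(\lambda_i,\alpha_j)}{(\alpha_j,\alpha_j)}=\delta_{ij}$, which in the $e_i$-coordinates is triangular: writing $\lambda_i=\sum c_k e_k$, the equations for $j<n$ collapse to $c_k-c_{k+1}=\delta_{ik}$, forcing $\lambda_i=e_1+\cdots+e_i$ in the range $i\le n-2+2\mathfrak{e}$. For the remaining fundamental weight(s) $\lambda_n$ (and $\lambda_{n-1}$ in type $D$) the final equation involves the short/long last simple root, and a short calculation produces the $\tfrac{1}{2}$ factors characteristic of the spin weights in types $B$ and $D$ versus the weight $e_1+\cdots+e_n$ of type $C$; this matches the stated formulas. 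Finally, summing $\rho=\lambda_1+\cdots+\lambda_n$ term by term and collecting coefficients gives $\rho=\sum_{i=1}^{n}(n+\mathfrak{e}-i)\,e_i$, as asserted.

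The calculation is entirely routine, and the only genuine subtlety is bookkeeping around the last node of the Dynkin diagram, where the short-versus-long distinction between $B_n$, $C_n$, and $D_n$ forces the formula for $\lambda_n$ to split into three cases; this is precisely the role of $\mathfrak{e}$, and it is this case split that one must track carefully throughout the later sections when $T_X^\lambda$ is computed explicitly.
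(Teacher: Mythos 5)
Your derivation is correct: the matrix-model realizations, the lexicographic choice of positive system, the triangular system for the fundamental weights, and the final summation for $\rho$ all check out in each of the three types, including the bookkeeping encoded by $\mathfrak{e}$. The paper offers no proof of this lemma and simply cites the tables in the appendix of Carter's book, so your argument is precisely the standard computation underlying those tables rather than a genuinely different route.
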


\textbf{I. The associated datum on $A_{n-1}/P_I$}

The associated datum $T^{\lambda}_{A_{n-1}/P_I}$ on $A_{n-1}/P_I=A_{n-1}/P_{d_1,\ldots,d_s}$
consists of ${s+1\choose 2}$ matrices, which has already appeared in \cite{Fang2022}, we repeat it here for the sake of self-consistency.

$$T^{\lambda}_{A_{n-1}/P_I}=\{(A^{ij})|1\le i\le j\le s\},$$
where $(A^{ij})$ is a $((d_i-d_{i-1})\times (d_{j+1}-d_j))$ matrix with coefficients 
\begin{align}\label{matrix1}
	A^{ij}_{uv}=\frac{\sum\limits_{k=d_i-u+1}^{d_j+v-1}(a_k+1)}{j-i+1}.
\end{align}
\begin{equation*}\label{matrix}
	\left(
	\begin{array}{ccccc}
		\frac{\sum\limits_{k=d_i}^{d_j}a_k+d_j-d_i+1}{j-i+1}&\cdots&	\frac{\sum\limits_{k=d_i}^{d_j+v-1}a_k+d_j-d_i+v}{j-i+1}&\cdots&	\frac{\sum\limits_{k=d_i}^{d_{j+1}-1}a_k+d_{j+1}-d_i}{j-i+1}\\
		\vdots&\ddots&\vdots&\ddots&\vdots\\
		\frac{\sum\limits_{k=d_i-u+1}^{d_j}a_k+d_j-d_i+u}{j-i+1}&\cdots&	\frac{\sum\limits_{k=d_i-u+1}^{d_j+v-1}a_k+d_j-d_i+u+v-1}{j-i+1}&\cdots&	\frac{\sum\limits_{k=d_i-u+1}^{d_{j+1}-1}a_k+d_{j+1}-d_i+u-1}{j-i+1}\\
		\vdots&\ddots&\vdots&\ddots&\vdots\\
		\frac{\sum\limits_{k=d_{i-1}+1}^{d_j}a_k+d_j-d_{i-1}}{j-i+1}&\cdots&	\frac{\sum\limits_{k=d_{i-1}+1}^{d_j+v-1}a_k+d_j-d_{i-1}+v-1}{j-i+1}&\cdots&	\frac{\sum\limits_{k=d_{i-1}+1}^{d_{j+1}-1}a_k+d_{j+1}-d_{i-1}-1}{j-i+1}
	\end{array}
	\right).
\end{equation*}

By calculation, we can see that the maximal value of $T^{\lambda}_{A_{n-1}/P_I}$ is
\begin{align}
	M^{\lambda}_{A_{n-1}/P_I}=\max_{1\le i\le s}\big\{\sum\limits_{k=d_{i-1}+1}^{d_{i+1}-1}(a_k+1)\big\}.  
\end{align}

\textbf{II. The associated datum on $B_n/P_I$ and $C_n/P_I$}

\textbf{Case (a):} $I=\{d_1,\ldots,d_s\}$ and $d_s\ne n$.

By Lemma \ref{fund}, we see that ($\sum l_ie_i$ is abbreviated $(l_1,\ldots,l_n)$ in the following)
\begin{align*}
	\lambda+\rho&=(a_1+1)\lambda_1+\cdots+(a_n+1)\lambda_n\\
	&=\big(\sum_{i=1}^{n-1}(a_i+1)+\mathfrak{e}(a_n+1),\sum_{i=2}^{n-1}(a_i+1)+\mathfrak{e}(a_n+1),\ldots,a_{n-1}+1+\mathfrak{e}(a_n+1),\mathfrak{e}(a_n+1)\big),    
\end{align*}
and
\begin{align*}
	\sum_{i\in I}\lambda_{i}&=\lambda_{d_1}+\cdots+\lambda_{d_s} \\
	&=(\underbrace{s,\ldots,s}_\text{$d_1$~terms},\underbrace{s-1,\ldots,s-1}_\text{$(d_2-d_1)$~terms},\ldots,\underbrace{1,\ldots,1}_\text{$(d_s-d_{s-1})$~terms},\underbrace{0,\ldots,0}_\text{$(n-d_s)$~terms}).     
\end{align*}
Hence $\alpha\in \Phi^{+}_{G/P_I}$ is equivalent to saying that 
\begin{enumerate}
	\item[(1)] $\alpha=e_{d_i-u+1}-e_{d_j+v}~(1\le u\le d_i-d_{i-1},1\le v\le d_{j+1}-d_j)$, where $1\le i\le j\le s$, or
	\item[(2)] $\alpha=e_{d_{i-1}+u}+e_{d_j+v}~(1\le u\le d_i-d_{i-1},1\le v\le d_{j+1}-d_j)$, where $1\le i\le j\le s$, or
	\item[(3)] $\alpha=e_{d_{i-1}+u}+e_{d_{i-1}+v}~(1\le u\le v\le d_i-d_{i-1})$, where $1\le i\le s$.
\end{enumerate}
(Note that here we replace the root $e_t\in \Phi^{+}_B$ with $2e_t$ for the value of $\frac{(\lambda+\rho,e_t)}{(\sum_{i\in I}\lambda_{i},e_t)}$ is equal to $\frac{(\lambda+\rho,2e_t)}{(\sum_{i\in I}\lambda_{i},2e_t)}$.)

Fix integers $i,j~(1\le i\le j\le s)$. We separately calculate the value $\frac{(\lambda+\rho,\alpha)}{(\sum_{i\in I}\lambda_{i},\alpha)}$ for the above three cases.
\begin{align}\label{P}
	P^{ij}_{uv}:=\frac{(\lambda+\rho,e_{d_i-u+1}-e_{d_j+v})}{(\sum_{i\in I}\lambda_{i},e_{d_i-u+1}-e_{d_j+v})}=\frac{\sum\limits_{k=d_i-u+1}^{d_j+v-1}(a_k+1)}{j-i+1},
\end{align}
\begin{align}\label{Q}
	Q^{ij}_{uv}:=\frac{(\lambda+\rho,e_{d_{i-1}+u}+e_{d_j+v})}{(\sum_{i\in I}\lambda_{i},e_{d_{i-1}+u}-e_{d_j+v})}=\frac{\sum\limits_{k=d_{i-1}+u}^{n-1}(a_k+1)+\sum\limits_{k=d_j+v}^{n-1}(a_k+1)+2\mathfrak{e}(a_n+1)}{2s+1-(i+j)}, 
\end{align}
and
\begin{align}\label{R}
	R^{i}_{uv}:=\frac{(\lambda+\rho,e_{d_{i-1}+u}+e_{d_{i-1}+v})}{(\sum_{i\in I}\lambda_{i},e_{d_{i-1}+u}+e_{d_{i-1}+v})}=\frac{\sum\limits_{k=d_{i-1}+u}^{n-1}(a_k+1)+\sum\limits_{k=d_{i-1}+v}^{n-1}(a_k+1)+2\mathfrak{e}(a_n+1)}{2(s+1-i)}.    
\end{align}
There we conclude that $T^{\lambda}_{G/P_I}$ consists of matrices $(P^{ij})$, $(Q^{ij})~(1\le i\le j\le s)$ and $(R^{i})~(1\le i\le s)$. By calculation, it's not hard to get that
\begin{align}\label{seq}
	M^{\lambda}_{G/P_I}=\max\big\{\max_
	{1\le i\le s-1}\{\sum\limits_{k=d_{i-1}+1}^{d_{i+1}-1}(a_k+1)\}, \sum\limits_{k=d_{s-1}+1}^{n-1}(a_k+1)+
	\sum\limits_{k=d_s+1}^{n-1}(a_k+1)+2\mathfrak{e}(a_n+1)\big\}. 
\end{align}

\textbf{Case (b):} $I=\{d_1,\ldots,d_s\}$ and $d_s=n$.

In this case, 
\begin{align*}
	\sum_{i\in I}\lambda_{i}&=\lambda_{d_1}+\cdots+\lambda_{d_s} \\
	&=(\underbrace{s-1+\mathfrak{e},\ldots,s-1+\mathfrak{e}}_\text{$d_1$~terms},\underbrace{s-2+\mathfrak{e},\ldots,s-2+\mathfrak{e}}_\text{$(d_2-d_1)$~terms},\ldots,\underbrace{1+\mathfrak{e},\ldots,1+\mathfrak{e}}_\text{$(d_{s-1}-d_{s-2})$~terms},\underbrace{\mathfrak{e},\ldots,\mathfrak{e}}_\text{$(n-d_{s-1})$~terms}).     
\end{align*}
Thus $\alpha\in \Phi^{+}_{G/P_I}$ is equivalent to saying that 
\begin{enumerate}
	\item[(1)] $\alpha=e_{d_i-u+1}-e_{d_j+v}~(1\le u\le d_i-d_{i-1},1\le v\le d_{j+1}-d_j)$, where $1\le i\le j\le s-1$, or
	\item[(2)] $\alpha=e_{d_{i-1}+u}+e_{d_j+v}~(1\le u\le d_i-d_{i-1},1\le v\le d_{j+1}-d_j)$, where $1\le i\le j\le s-1$, or\\
	(notice that the above two cases don't happen when $s=1$.)
	\item[(3)] $\alpha=e_{d_{i-1}+u}+e_{d_{i-1}+v}~(1\le u\le v\le d_i-d_{i-1})$, where $1\le i\le s$.
\end{enumerate}

Fix integers $i,j~(1\le i\le j\le s-1)$. We have
\begin{align}
	\tilde{P}^{ij}_{uv}:=\frac{(\lambda+\rho,e_{d_i-u+1}-e_{d_j+v})}{(\sum_{i\in I}\lambda_{i},e_{d_i-u+1}-e_{d_j+v})}=\frac{\sum\limits_{k=d_i-u+1}^{d_j+v-1}(a_k+1)}{j-i+1},
\end{align}
\begin{align}
	\tilde{Q}^{ij}_{uv}:=\frac{(\lambda+\rho,e_{d_{i-1}+u}+e_{d_j+v})}{(\sum_{i\in I}\lambda_{i},e_{d_{i-1}+u}-e_{d_j+v})}=\frac{\sum\limits_{k=d_{i-1}+u}^{n-1}(a_k+1)+\sum\limits_{k=d_j+v}^{n-1}(a_k+1)+2\mathfrak{e}(a_n+1)}{2(s+\mathfrak{e})-(i+j+1)}, 
\end{align}
and fix integer $i~(1\le i\le s)$, we have
\begin{align}
	\tilde{R}^{i}_{uv}:=\frac{(\lambda+\rho,e_{d_{i-1}+u}+e_{d_{i-1}+v})}{(\sum_{i\in I}\lambda_{i},e_{d_{i-1}+u}+e_{d_{i-1}+v})}=\frac{\sum\limits_{k=d_{i-1}+u}^{n-1}(a_k+1)+\sum\limits_{k=d_{i-1}+v}^{n-1}(a_k+1)+2\mathfrak{e}(a_n+1)}{2(s+\mathfrak{e}-i)}.    
\end{align}
The matrices $(\tilde{P}^{ij})$, $(\tilde{Q}^{ij})~(1\le i\le j\le s-1)$ and $(\tilde{R}^{i})~(1\le i\le s)$ are the associate datum of $G/P_{d_1,\ldots,d_{s-1},n}$, where $G$ is of type $B$ or $C$. And 
\begin{align}
	M^{\lambda}_{G/P_{d_1,\ldots,d_{s-1},n}}=\max\big\{\max_
	{1\le i\le s-1}\{\sum\limits_{k=d_{i-1}+1}^{d_{i+1}-1}(a_k+1)\}, \sum\limits_{k=d_{s-1}+1}^{n-1}\frac{1}{\mathfrak{e}}(a_k+1)+
	(a_n+1)\big\}. 
\end{align}

\textbf{III. The associated datum on $D_n/P_I$ }

We need to consider the following four cases separately.

\textbf{Case (a):} $I=\{d_1,\ldots,d_s\}$ and $d_s\le n-2$.

By Lemma \ref{fund}, we have
\begin{align*}
	\lambda+\rho=&(a_1+1)\lambda_1+\cdots+(a_n+1)\lambda_n\\
	=&\big(\sum_{i=1}^{n-2}(a_i+1)+\frac{1}{2}(a_{n-1}+a_n+2),\sum_{i=2}^{n-2}(a_i+1)+\frac{1}{2}(a_{n-1}+a_n+2),\ldots,\\
	&\frac{1}{2}(a_{n-1}+a_n+2),\frac{1}{2}(a_n-a_{n-1})\big),    
\end{align*}
and
\begin{align*}
	\sum_{i\in I}\lambda_{i}&=\lambda_{d_1}+\cdots+\lambda_{d_s} \\
	&=(\underbrace{s,\ldots,s}_\text{$d_1$~terms},\underbrace{s-1,\ldots,s-1}_\text{$(d_2-d_1)$~terms},\ldots,\underbrace{1,\ldots,1}_\text{$(d_s-d_{s-1})$~terms},\underbrace{0,\ldots,0}_\text{$(n-d_s)$~terms}).     
\end{align*}
Hence $\alpha\in \Phi^{+}_{G/P_I}$ is equivalent to saying that 
\begin{enumerate}
	\item[(1)] $\alpha=e_{d_i-u+1}-e_{d_j+v}~(1\le u\le d_i-d_{i-1},1\le v\le d_{j+1}-d_j)$, where $1\le i\le j\le s$, or
	\item[(2)] $\alpha=e_{d_{i-1}+u}+e_{d_j+v}~(1\le u\le d_i-d_{i-1},1\le v\le d_{j+1}-d_j)$, where $1\le i\le j\le s$, or
	\item[(3)] $\alpha=e_{d_{i-1}+u}+e_{d_{i-1}+v}~(1\le u< v\le d_i-d_{i-1})$, where $1\le i\le s$.
\end{enumerate}
Fix integers $i,j~(1\le i\le j\le s)$. We separately calculate the value $\frac{(\lambda+\rho,\alpha)}{(\sum_{i\in I}\lambda_{i},\alpha)}$ for the above three cases.

\begin{align}
	P^{ij}_{uv}:=\frac{(\lambda+\rho,e_{d_i-u+1}-e_{d_j+v})}{(\sum_{i\in I}\lambda_{i},e_{d_i-u+1}-e_{d_j+v})}=\frac{\sum\limits_{k=d_i-u+1}^{d_j+v-1}(a_k+1)}{j-i+1},
\end{align}
\begin{align}
	Q^{ij}_{uv}:=\frac{(\lambda+\rho,e_{d_{i-1}+u}+e_{d_j+v})}{(\sum_{i\in I}\lambda_{i},e_{d_{i-1}+u}-e_{d_j+v})}=\frac{\sum\limits_{k=d_{i-1}+u}^{n-2}(a_k+1)+\sum\limits_{k=d_j+v}^{n}(a_k+1)}{2s+1-(i+j)}, 
\end{align}
and
\begin{align}
	R^{i}_{uv}:=\frac{(\lambda+\rho,e_{d_{i-1}+u}+e_{d_{i-1}+v})}{(\sum_{i\in I}\lambda_{i},e_{d_{i-1}+u}+e_{d_{i-1}+v})}=\frac{\sum\limits_{k=d_{i-1}+u}^{n-2}(a_k+1)+\sum\limits_{k=d_{i-1}+v}^{n}(a_k+1)}{2(s+1-i)}.    
\end{align}
$T^{\lambda}_{G/P_I}$ consists of matrices $(P^{ij})$, $(Q^{ij})~(1\le i\le j\le s)$ and $(R^{i})~(1\le i\le s)$. By calculaion, we have
\begin{align}
	M^{\lambda}_{G/P_I}=\max\big\{\max_
	{1\le i\le s-1}\{\sum\limits_{k=d_{i-1}+1}^{d_{i+1}-1}(a_k+1)\}, \sum\limits_{k=d_{s-1}+1}^{n-2}(a_k+1)+\sum\limits_{k=d_s+1}^{n}(a_k+1)\big\}. 
\end{align}

\textbf{Case (b):} $I=\{d_1,\ldots,d_s\}$ and $d_s=n-1$.

In this case, we calculate that
\begin{align*}
	\sum_{i\in I}\lambda_{i}&=\lambda_{d_1}+\cdots+\lambda_{d_s} \\
	&=(\underbrace{s-\frac{1}{2},\ldots,s-\frac{1}{2}}_\text{$d_1$~terms},\underbrace{s-\frac{3}{2},\ldots,s-\frac{3}{2}}_\text{$(d_2-d_1)$~terms},\ldots,\underbrace{\frac{1}{2},\ldots,\frac{1}{2}}_\text{$(d_s-d_{s-1})$~terms},\underbrace{-\frac{1}{2}}_\text{$1$~term}).     
\end{align*}
Hence $\alpha\in \Phi^{+}_{G/P_I}$ is equivalent to saying that 
\begin{enumerate}
	\item[(1)] $\alpha=e_{d_i-u+1}-e_{d_j+v}~(1\le u\le d_i-d_{i-1},1\le v\le d_{j+1}-d_j)$, where $1\le i\le j\le s$, or
	\item[(2)] $\alpha=e_{d_{i-1}+u}+e_{d_j+v}~(1\le u\le d_i-d_{i-1},1\le v\le d_{j+1}-d_j)$, where $1\le i\le j\le s$ and $i\ne s$, or\\
	(notice that this doesn't happen when $s=1$.)
	\item[(3)] $\alpha=e_{d_{i-1}+u}+e_{d_{i-1}+v}~(1\le u< v\le d_i-d_{i-1})$, where $1\le i\le s$.
\end{enumerate}
From the above argument, we know the associated datum $T^{\lambda}_{G/P_I}$ consists of the following matrices:
\begin{align}\label{p}
	\tilde{P}^{ij}_{uv}:=\frac{(\lambda+\rho,e_{d_i-u+1}-e_{d_j+v})}{(\sum_{i\in I}\lambda_{i},e_{d_i-u+1}-e_{d_j+v})}=\frac{\sum\limits_{k=d_i-u+1}^{d_j+v-1}(a_k+1)}{j-i+1},
\end{align}
\begin{align}\label{q}
	\tilde{Q}^{ij}_{uv}:=\frac{(\lambda+\rho,e_{d_{i-1}+u}+e_{d_j+v})}{(\sum_{i\in I}\lambda_{i},e_{d_{i-1}+u}-e_{d_j+v})}=\frac{\sum\limits_{k=d_{i-1}+u}^{n-2}(a_k+1)+\sum\limits_{k=d_j+v}^{n}(a_k+1)}{2s-(i+j)}, 
\end{align}
and
\begin{align}\label{r}
	\tilde{R}^{i}_{uv}:=\frac{(\lambda+\rho,e_{d_{i-1}+u}+e_{d_{i-1}+v})}{(\sum_{i\in I}\lambda_{i},e_{d_{i-1}+u}+e_{d_{i-1}+v})}=\frac{\sum\limits_{k=d_{i-1}+u}^{n-2}(a_k+1)+\sum\limits_{k=d_{i-1}+v}^{n}(a_k+1)}{2(s-i)+1}.    
\end{align}

By calculaion, we have
\begin{align}
	M^{\lambda}_{G/P_I}=\max\big\{&\max_
	{\substack{1\le i\le s\\i\ne s-1}}\{\sum\limits_{k=d_{i-1}+1}^{d_{i+1}-1}(a_k+1)\}, \sum\limits_{k=d_{s-2}+1}^{n-2}(a_k+1)+(a_n+1),\\
	&\sum\limits_{k=d_{s-1}+1}^{n-2}(a_k+1)+\sum\limits_{k=d_{s-1}+2}^{n}(a_k+1)\big\}. 
\end{align}
\\

When $s\ge 2$, we need to consider the remaining two cases.

\textbf{Case (c):} $I=\{d_1,\ldots,d_s\}$ and $d_s=n$ and $d_{s-1}\ne n-1$.

In this case, we have
\begin{align*}
	\sum_{i\in I}\lambda_{i}&=\lambda_{d_1}+\cdots+\lambda_{d_s} \\
	&=(\underbrace{s-\frac{1}{2},\ldots,s-\frac{1}{2}}_\text{$d_1$~terms},\underbrace{s-\frac{3}{2},\ldots,s-\frac{3}{2}}_\text{$(d_2-d_1)$~terms},\ldots,\underbrace{\frac{1}{2},\ldots,\frac{1}{2}}_\text{$(n-d_{s-1})$~terms}).      
\end{align*}
Hence $\alpha\in \Phi^{+}_{G/P_I}$ is equivalent to saying that 
\begin{enumerate}
	\item[(1)] $\alpha=e_{d_i-u+1}-e_{d_j+v}~(1\le u\le d_i-d_{i-1},1\le v\le d_{j+1}-d_j)$, where $1\le i\le j\le s-1$, or
	\item[(2)] $\alpha=e_{d_{i-1}+u}+e_{d_j+v}~(1\le u\le d_i-d_{i-1},1\le v\le d_{j+1}-d_j)$, where $1\le i\le j\le s-1$, or
	\item[(3)] $\alpha=e_{d_{i-1}+u}+e_{d_{i-1}+v}~(1\le u< v\le d_i-d_{i-1})$, where $1\le i\le s$.
\end{enumerate}
By calculation, we find that the associated datum $T^{\lambda}_{G/P_I}$ in Case (c) is the same as the datum in Case (b), i.e. 
$$T^{\lambda}_{G/P_I}=(\tilde{P}^{ij},\tilde{Q}^{ij}~(1\le i\le j\le s-1),\tilde{R}^{i}~(1\le i\le s))~(see (\ref{p}), (\ref{q}), (\ref{r})). $$
And
\begin{align}
	M^{\lambda}_{G/P_I}=\max\big\{\max_
	{1\le i\le s-1}\{\sum\limits_{k=d_{i-1}+1}^{d_{i+1}-1}(a_k+1)\}, \sum\limits_{k=d_{s-1}+1}^{n-2}(a_k+1)+\sum\limits_{k=d_{s-1}+2}^{n}(a_k+1)\big\}.    
\end{align}

\textbf{Case (d):} $I=\{d_1,\ldots,d_s\}$ and $d_s=n$ and $d_{s-1}=n-1$.

In this case, we have
\begin{align*}
	\sum_{i\in I}\lambda_{i}&=\lambda_{d_1}+\cdots+\lambda_{d_s} \\
	&=(\underbrace{s-1,\ldots,s-1}_\text{$d_1$~terms},\ldots,\underbrace{1,\ldots,1}_\text{$(d_{s-1}-d_{s-2})$~terms},\underbrace{0}_\text{$1$~term}).     
\end{align*}
Hence $\alpha\in \Phi^{+}_{G/P_I}$ is equivalent to saying that 
\begin{enumerate}
	\item[(1)] $\alpha=e_{d_i-u+1}-e_{d_j+v}~(1\le u\le d_i-d_{i-1},1\le v\le d_{j+1}-d_j)$, where $1\le i\le j\le s-1$, or
	\item[(2)] $\alpha=e_{d_{i-1}+u}+e_{d_j+v}~(1\le u\le d_i-d_{i-1},1\le v\le d_{j+1}-d_j)$, where $1\le i\le j\le s-1$, or
	\item[(3)] $\alpha=e_{d_{i-1}+u}+e_{d_{i-1}+v}~(1\le u< v\le d_i-d_{i-1})$, where $1\le i\le s-1$.
\end{enumerate}
From the above argument, we know the associated datum consists of the following matrices:
\begin{align}
	\hat{P}^{ij}_{uv}:=\frac{(\lambda+\rho,e_{d_i-u+1}-e_{d_j+v})}{(\sum_{i\in I}\lambda_{i},e_{d_i-u+1}-e_{d_j+v})}=\frac{\sum\limits_{k=d_i-u+1}^{d_j+v-1}(a_k+1)}{j-i+1},
\end{align}
\begin{align}
	\hat{Q}^{ij}_{uv}:=\frac{(\lambda+\rho,e_{d_{i-1}+u}+e_{d_j+v})}{(\sum_{i\in I}\lambda_{i},e_{d_{i-1}+u}-e_{d_j+v})}=\frac{\sum\limits_{k=d_{i-1}+u}^{n-2}(a_k+1)+\sum\limits_{k=d_j+v}^{n}(a_k+1)}{2s-1-(i+j)}, 
\end{align}
and
\begin{align}
	\hat{R}^{i}_{uv}:=\frac{(\lambda+\rho,e_{d_{i-1}+u}+e_{d_{i-1}+v})}{(\sum_{i\in I}\lambda_{i},e_{d_{i-1}+u}+e_{d_{i-1}+v})}=\frac{\sum\limits_{k=d_{i-1}+u}^{n-2}(a_k+1)+\sum\limits_{k=d_{i-1}+v}^{n}(a_k+1)}{2(s-i)}.
\end{align} 
By calculation, we get that
\begin{align}
	M^{\lambda}_{G/P_I}=\max\big\{\max_
	{1\le i\le s-1}\{\sum\limits_{k=d_{i-1}+1}^{d_{i+1}-1}(a_k+1)\}, \sum\limits_{k=d_{s-2}+1}^{n-2}+(a_n+1)\big\}. 
\end{align}

\subsubsection{The associated datum for exceptional type}
In this section, we explicitly describe the associated datum $T^{\lambda}_{G/P_I}$ on $G/P_I$, where $G$ is an exceptional Lie group of type $E_n~(n=6,7,8)$, $F_4$ or $G_2$. Recall that the positive roots of an exceptional simple Lie group are usually expressed as linear combinations of orthogonal bases (see \cite{carter2005lie} Section $8$). However, these expressions are so complicated that it is difficult to use them to calculate the associated datum. So, for the sake of convenience, we express these positive roots as combinations of simple roots. In \cite{Fang2023} Lemma 3.8, the authors have listed all the combinations in terms of the lexicographical order.
In this section, we write a positive root $\alpha$ as a combination of simple roots $\alpha_i~(1\le i\le n)$, 
$$\alpha=l_1\alpha_1+\cdots+l_n\alpha_n,~l_i\ge 0~(1\le i\le n).$$
From the relationship between the fundamental weights and the simple roots (see Section 2.1 (4)), it is not hard to see that
\[
\alpha\in \Phi^{+}_{G/P_I}~\text{if and only if}~
(l_{d_1},\ldots,l_{d_s})\ne(0,\ldots,0).
\]
Since $\lambda=a_1\lambda_1+\cdots+a_n\lambda_n$ and $I=\{d_1,\ldots,d_s\}\subset\{1,\ldots,n\}$, we compute
\begin{align*}
	(\lambda+\rho,\alpha)&=(\sum_{i=1}^{n}(a_i+1)\lambda_i,\sum_{i=1}^{n}l_i\alpha_i)
	=\sum_{i=1}^{n}(a_i+1).l_i(\lambda_i,\alpha_i)\\
	&=\frac{1}{2}\sum_{i=1}^{n}(a_i+1).l_i(\alpha_i,\alpha_i),    
\end{align*}
and
\begin{align*}
	(\sum_{i\in I}\lambda_{i},\alpha)&=(\sum_{i=1}^{s}\lambda_{d_i},\sum_{i=1}^{n}l_i\alpha_i)=\sum_{i=1}^{s}l_{d_i}(\lambda_{d_i},\alpha_{d_i})\\
	& =\frac{1}{2}\sum_{i=1}^{s}l_{d_i}(\alpha_{d_i},\alpha_{d_i}).
\end{align*}

\textbf{I. The associated datum on $E_n/P_I~(n=6,7,8)$}

Since all simple roots of $E_n~(n=6,7,8)$ are long roots, the Killing form $(\alpha_i,\alpha_i)=(\alpha_j,\alpha_j)$ for any $1\le i,j\le n$. Therefore, we have
\begin{align*}
	\frac{(\lambda+\rho,\alpha)}{(\sum_{i\in I}\lambda_{i},\alpha)}= \frac{\sum_{i=1}^{n}(a_i+1).l_i}{\sum_{i=1}^{s}l_{d_i}}.  
\end{align*}
It follows that the associated datum of $E_{\lambda}$ on $E_n/P_I$ is
\begin{align}
T^{\lambda}_{E_n/P_I}=\left\{\frac{\sum_{i=1}^{n}(a_i+1).l_i}{\sum_{i=1}^{s}l_{d_i}} \bigm|\alpha=\sum_{i=1}^{n}l_i\alpha_i~\text{and}~(l_{d_1},\ldots,l_{d_s})\ne(0,\ldots,0)\right\}.
\end{align}

\textbf{II. The associated datum on $F_4/P_I$}

From the Dynkin diagram of $F_4$, we observe that the simple roots $\alpha_1$, $\alpha_2$ of $F_4$ are long roots and $\alpha_3$, $\alpha_4$ are short roots. Since the nodes $2$ and $3$ are joined by two edges, 
$$(\alpha_1,\alpha_1)=(\alpha_2,\alpha_2)=2(\alpha_3,\alpha_3)=2(\alpha_4,\alpha_4).$$
There, we have
\begin{align*}
	\frac{(\lambda+\rho,\alpha)}{(\sum_{i\in I}\lambda_{i},\alpha)}= \frac{2\sum_{i=1}^{2}(a_i+1).l_i+\sum_{j=3}^{4}(a_j+1).l_j}{\sum_{i=1}^{s}t_{d_i}.l_{d_i}},  
\end{align*}
where 
\[
t_{d_i}=\left\{\begin{array}{cc}
	2,& \text{if}~ d_i=1 ~\text{or}~ 2,\\
	1,& \text{otherwise}.\\
\end{array}\right.  
\]
Moreover,
\begin{align}
T^{\lambda}_{F_4/P_I}=\left\{\frac{2\sum_{i=1}^{2}(a_i+1).l_i+\sum_{j=3}^{4}(a_j+1).l_j}{\sum_{i=1}^{s}t_{d_i}.l_{d_i}} \bigm|\alpha=\sum_{i=1}^{4}l_i\alpha_i~\text{and}~(l_{d_1},\ldots,l_{d_s})\ne(0,\ldots,0)\right\}.
\end{align}

\textbf{III. The associated datum on $G_2/P_I$}

Note that the simple roots $\alpha_1$ and $\alpha_2$
are short and long roots of $G_2$ respectively. Since
$(\alpha_2,\alpha_2)=3(\alpha_1,\alpha_1)$, we have
\begin{align*}
	\frac{(\lambda+\rho,\alpha)}{(\sum_{i\in I}\lambda_{i},\alpha)}= \frac{l_1(a_1+1)+3l_2(a_2+1)}{\sum_{i=1}^{s}t'_{d_i}.l_{d_i}},  
\end{align*}
where $t'_{d_i}=1$ if $d_i=1$ and otherwise $t'_{d_i}=3$. Moreover,
\begin{align}\label{g2}
T^{\lambda}_{G_2/P_I}=\left\{\frac{l_1(a_1+1)+3l_2(a_2+1)}{\sum_{i=1}^{s}t'_{d_i}.l_{d_i}} \bigm|\alpha=\sum_{i=1}^{2}l_i\alpha_i~\text{and}~(l_{d_1},\ldots,l_{d_s})\ne(0,\ldots,0)\right\}.
\end{align}
\begin{ex}
Let's consider the simple Lie group $G_2$. Since the positive roots of $G_2$ are $\alpha_1,~\alpha_2,~\alpha_1+\alpha_2,~2\alpha_1+\alpha_2,~3\alpha_1+\alpha_2$, and $3\alpha_1+2\alpha_2$, the associate datum of $L_{\lambda}~(\lambda=a_1\lambda_1+a_2\lambda_2)$ on $G_2/P_{1,2}$ is of the form (see the sequence (\ref{g2}))
$$\{a_1+1,a_2+1,\frac{a_1+3a_2}{4}+1,\frac{2a_1+3a_2}{5}+1,\frac{a_1+a_2}{2}+1,\frac{a_1+2a_2}{3}+1\}.$$
Without loss of generality, we assume $\min\{a_1,a_2\}=0$. If $a_1=0$, then  $T^{\lambda}_{G_2/P_{1,2}}=\{1, a_2+1,\frac{3a_2}{4}+1,\frac{3a_2}{5}+1,\frac{a_2}{2}+1,\frac{2a_2}{3}+1\}$. When $a_2=0$, it's obvious that $L_{\lambda}$ is an ACM bundle. When $a_2>0$, the statement $L_{\lambda}$ is an ACM bundle forces to have $2\in T^{\lambda}_{G_2/P_{1,2}}$, then the only choices of $a_2$ are $1$ and $2$. It's easy to check $L_{\lambda}$ is an ACM bundle when $a_1=0$ and $a_2=0,1,2$. 
Arguing in the same way, if $a_2=0$, then $L_{\lambda}$ is an ACM bundle if and only if $a_1=0,1,2$. Hence the irreducible homogeneous ACM bundles on $G_2/P_{1,2}$ can only be (up to twist) line bundles $L_{a_1\lambda_1+a_2\lambda_2}$, where $0\le a_1,a_2\le 2$. 
\end{ex}
The above example together with Example 3.12 in \cite{Fang2023} classifies the irreducible homogeneous ACM bundles on all rational homogeneous spaces $G_2/P$.  
 \section{Applications}
 In this section, we always assume that $G$ is a classical simple Lie group.  
 We use Theorem \ref{thm} to judge that some special homogeneous vector bundles on $X=G/P_{d_1,\ldots,d_s}$ with respect to the minimal ample class are ACM bundles. (For the case where $G$ is of type $A$, these assertions have already appeared in \cite{Fang2022}, and will not be repeated here.) We first recall that the minimum value of the associated datum of $E_{\lambda}$ on $X$ is $\min_{1\le i\le s}\{a_{d_i}\}+1$ (see Remark \ref{min}). Denote by
 \[
 \mu=\min_{1\le i\le s}\{a_{d_i}\}+1~\text{and}~\nu=\max_{\substack{1\le i\le s\\a_{d_i}+1=\mu}}\{d_{i+1}-d_{i-1}\}-1.
 \]
 
 \begin{prop}\label{prop1}
 	Let $X=B_n/P_{d_1,\ldots,d_s}$ with $d_s\le n-1$. Fix an integer $m~(1\le m\le s+1)$. Let $E$ be an irreducible homogeneous bundle on $X$ with highest weight 
 	$$\sum_{\substack{1\le i\le s\\i\ne m}}a_{d_i}\lambda_{d_i}+\sum\limits_{j=d_{m-1}+1}^{d_m}a_j\lambda_{j}.$$ 
 	If $|a_{d_i}-a_{d_j}|\le \nu$ for any $1\le i,j\le s$ and for any $d_{m-1}+1\le j\le d_m-1$,
 	\begin{align}\label{condition}
 		\left\{
 		\begin{array}{lll}
 			0\le a_j\le d_2-d_1-1, & \text{when}~m=1;\\
 			0\le a_j\le \min\{d_{m+1}-d_m,d_{m-1}-d_{m-2}\}-1,& \text{when}~2\le m\le s;\\
 			0\le a_j\le d_s-d_{s-1}-1~\text{and}~0\le a_n\le 2(d_s-d_{s-1})-1, & \text{when}~m=s+1,\\
 		\end{array}
 		\right.
 	\end{align}
 	then $E$ is an ACM bundle on $X$. 
 \end{prop}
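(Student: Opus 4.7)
The plan is to apply Theorem \ref{thm} directly: compute $m^\lambda_X = \mu$ from Remark \ref{min}, identify $M^\lambda_X$ from formula (\ref{seq}), and then exhibit every integer in $[\mu, M^\lambda_X]$ as an element of the associated datum $T^\lambda_X$. Recall that for $X = B_n/P_{d_1,\ldots,d_s}$ with $d_s \le n-1$, the set $T^\lambda_X$ is the union of the three families $P^{ij}_{uv}$, $Q^{ij}_{uv}$, $R^{i}_{uv}$ given in (\ref{P})--(\ref{R}); the hypothesis on the weight of $E$ kills most coefficients $a_k$, which renders these formulas very manageable.

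For each ``regular'' block $i \neq m$, only $a_{d_i}$ is nonzero among the $a_k$ with $d_{i-1}+1 \le k \le d_{i+1}-1$, so the diagonal entries collapse to $P^{ii}_{uv} = u+v-1+a_{d_i}$. As $(u,v)$ sweeps its rectangle, these values realize every integer from $a_{d_i}+1$ up to $a_{d_i}+(d_{i+1}-d_{i-1})-1$. Picking an index $i_0$ with $a_{d_{i_0}}+1 = \mu$ and $d_{i_0+1}-d_{i_0-1}-1 = \nu$, a single block already covers $\{\mu,\mu+1,\ldots,\mu+\nu-1\}$, and the hypothesis $|a_{d_i}-a_{d_j}| \le \nu$ is precisely what forces the ranges of neighboring regular blocks to overlap rather than skip an integer. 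For the special block $m$, the diagonal $P^{mm}_{uv}$ and its off-diagonal partners $P^{im}, P^{mj}$ likewise produce sets of consecutive integers, and the upper bounds in (\ref{condition}), split into the cases $m=1$, $2\le m\le s$, $m=s+1$, are calibrated so that the $P^{mm}$-sweep has no internal gap and matches up with the two adjacent regular-block ranges. Finally, to climb all the way up to $M^\lambda_X$ — in particular when the maximum in (\ref{seq}) comes from the $a_n$-tail term — one invokes the entries $Q^{ij}$ and $R^{i}$; in the $m=s+1$ case the companion bound $0 \le a_n \le 2(d_s-d_{s-1})-1$ is exactly what is needed for $R^{s}_{uv}$ (with $\mathfrak{e}=\tfrac12$) to produce every integer from the top of the $P$-covered range up to $M^\lambda_X$.

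The main obstacle is the bookkeeping at the special block $m$: for each of the three positions of $m$ one must separately check that the $P^{mm}$-sweep itself has no interior gap under the prescribed bound on the $a_j$'s, that it interlocks with the coverage coming from blocks $m-1$ and $m+1$, and, when $m=s+1$, that $R^{s}_{uv}$ continues the sequence up to $M^\lambda_X$ without skipping. Once these gap-free coverings are assembled, Theorem \ref{thm} immediately yields that $E$ is an ACM bundle.
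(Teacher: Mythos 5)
Your overall strategy is the same as the paper's: reduce to Theorem \ref{thm}, read off $m^{\lambda}_{X}=\mu$ from Remark \ref{min} and $M^{\lambda}_{X}$ from (\ref{seq}), check that each block of the associated datum realizes a full interval of integers, and then glue the intervals. The paper does not re-derive the block-local coverage from the matrices $(P^{ij})$, $(Q^{ij})$, $(R^{i})$ as you propose to; it imports it wholesale from Proposition 3.9 of \cite{Costa2016} and Corollary 3.15 of \cite{Du2023} (this is exactly where the case split (\ref{condition}) and the bound $0\le a_n\le 2(d_s-d_{s-1})-1$ enter). Your direct verification is feasible but is the bulk of the work, and you only sketch it.

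The genuine problem is your gluing step. You assert that $|a_{d_i}-a_{d_j}|\le\nu$ ``forces the ranges of neighboring regular blocks to overlap,'' but that is not what the hypothesis gives and it can fail: a narrow block $i$ with small $a_{d_i}$ sitting next to a wide block $i+1$ with $a_{d_{i+1}}=a_{d_i}+\nu$ produces ranges $[a_{d_i}+1,\,a_{d_i}+d_{i+1}-d_{i-1}-1]$ and $[a_{d_i}+\nu+1,\,\ldots]$ with a gap whenever $d_{i+1}-d_{i-1}-1<\nu$, which is perfectly compatible with the hypotheses (recall $\nu$ is the width of the \emph{widest} block among those attaining $\mu$, not of block $i$). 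The correct mechanism, and the one the paper uses, is to single out the index $p$ with $u_p:=a_{d_p}+1=\mu$ and $d_{p+1}-d_{p-1}-1=\nu$, and the index $q$ with $v_q=M^{\lambda}_{X}$ (where $v_i$ denotes the top of block $i$'s interval). Then $u_q=a_{d_q}+1\le a_{d_p}+\nu+1\le v_p+1$, so the two intervals $[u_p,v_p]$ and $[u_q,v_q]$ already cover $[\mu,M^{\lambda}_{X}]$ with no gap; no chaining through intermediate blocks is needed. As written, your argument would not close without this correction, although the fix uses only ingredients you already have (your block $i_0$ is exactly $p$).
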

 \begin{proof}
 	Firstly, for any $i~(1\le i\le s)$, we denote by 
 	\begin{align*}
 		u_i= a_{d_i}+1 
 	\end{align*}
 	and 
 	\begin{align*}
 		v_i=  \left\{
 		\begin{array}{ll}
 			\sum\limits_{k=d_{i-1}+1}^{d_{i+1}-1}(a_k+1), & \text{if}~1\le i\le s-1,\\
 			\sum\limits_{k=d_{s-1}+1}^{n-1}(a_k+1)+
 			\sum\limits_{k=d_s+1}^{n}(a_k+1), & \text{if}~i=s.\\
 		\end{array}
 		\right.
 	\end{align*}
 	By Remark \ref{min} and  the sequence (\ref{seq}), we have
 	$$\min_{1\le i\le s}u_i=\mu=m^{\lambda}_{X},~\max_{1\le i\le s}v_i=M^{\lambda}_{X}.$$
 	
 	Let $p~(1\le p\le s)$ be an integer such that $u_p=\mu=m^{\lambda}_{X}$ and $d_{p+1}-d_{p-1}-1=\nu$. Let $q~(1\le q\le s)$
 	be an integer with $v_q=M^{\lambda}_{X}$. According to the first hypothesis, we can obtain
 	\begin{align*}
 		u_q=a_{d_q}+1\le a_{d_p}+d_{p+1}-d_{p-1}-1 +1\le v_p+1.
 	\end{align*}
 	In addition, due to Proposition 3.9 of \cite{Costa2016} 
 	and Corollary 3.15 of \cite{Du2023}, one can find that the assumption (\ref{condition}) ensures that for any integer $i~(1\le i\le s)$ and any integer $l\in [u_i,v_i]$, $n_l\ge 1$. In particular, for any $l\in [u_p,v_p]$ and any $l\in [u_q,v_q]$, $n_l\ge 1$. 
 	
 	From the previous analysis, we already know that $u_q\le v_p+1$, so for any $l\in [u_p,v_q]$, $n_l\ge 1$. Since $u_p=m^{\lambda}_{X}$ and $v_q=M^{\lambda}_{X}$, we obtain that $E$ is an ACM bundle by Theorem \ref{thm}.
 \end{proof}
 
 With the very same proof, the above proposition  can be extended to homogeneous spaces $X=G/P$ with $G$ simple of type $C_n$ or $D_n$.
 \begin{prop}\label{prop2}
 	Let $X=C_n/P_{d_1,\ldots,d_s}$. Fix an integer $m~(1\le m\le s+1)$. Let $E$ be an irreducible homogeneous bundle on $X$ with highest weight 
 	$$\sum_{\substack{1\le i\le s\\i\ne m}}a_{d_i}\lambda_{d_i}+\sum\limits_{j=d_{m-1}+1}^{d_m}a_j\lambda_{j}.$$ 
 	If $|a_{d_i}-a_{d_j}|\le \nu$ for any $1\le i,j\le s$ and for any $d_{m-1}+1\le j\le d_m-1$,
 	\begin{align}
 		\left\{
 		\begin{array}{lll}
 			0\le a_j\le d_2-d_1-1, & \text{when}~m=1;\\
 			0\le a_j\le \min\{d_{m+1}-d_m,d_{m-1}-d_{m-2}\}-1,& \text{when}~2\le m\le s;\\
 			0\le a_j\le d_s-d_{s-1}-1~\text{and}~0\le a_n\le d_s-d_{s-1}-1, & \text{when}~m=s+1,\\
 		\end{array}
 		\right.
 	\end{align}
 	then $E$ is an ACM bundle on $X$. 
 \end{prop}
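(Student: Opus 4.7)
The plan is to follow the proof of Proposition \ref{prop1} almost verbatim, replacing the type-$B_n$ numerical data from Section 3.2.1 by the corresponding quantities for type $C_n$ (i.e., setting $\mathfrak{e}=1$ in the matrices $(P^{ij})$, $(Q^{ij})$, $(R^i)$ and in the formula for $M^{\lambda}_{G/P_I}$). Concretely, I would introduce
\[
u_i = a_{d_i}+1 \quad (1\le i\le s),
\]
and
\[
v_i=\begin{cases}
\sum\limits_{k=d_{i-1}+1}^{d_{i+1}-1}(a_k+1), & 1\le i\le s-1,\\[4pt]
\sum\limits_{k=d_{s-1}+1}^{n-1}(a_k+1)+\sum\limits_{k=d_s+1}^{n-1}(a_k+1)+2(a_n+1), & i=s,
\end{cases}
\]
so that Remark \ref{min} gives $\min_i u_i=\mu=m^{\lambda}_{X}$ and the type-$C_n$ formula for $M^{\lambda}_X$ gives $\max_i v_i=M^{\lambda}_{X}$.

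I would then pick indices $p,q$ with $u_p=\mu$, $d_{p+1}-d_{p-1}-1=\nu$, and $v_q=M^{\lambda}_{X}$. From the hypothesis $|a_{d_i}-a_{d_j}|\le\nu$ I obtain
\[
u_q = a_{d_q}+1 \le a_{d_p}+\nu+1 \le v_p+1,
\]
where the second inequality holds because $v_p$ dominates the sum $a_{d_p}+(d_{p+1}-d_{p-1}-1)+1$ term by term.

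The core combinatorial step is to show that, for each $1\le i\le s$ and every integer $l\in[u_i,v_i]$, the set $T^{\lambda}_{X}$ contains $l$, so that $n_l\ge 1$. For $1\le i\le s-1$ this uses the same block-by-block gap-filling on the matrices $(P^{ij})$ as in Proposition \ref{prop1}, appealing to Proposition 3.9 of \cite{Costa2016} and Corollary 3.15 of \cite{Du2023}; the bounds on $a_j$ stated in the proposition are tailored precisely to this. Combined with $u_q\le v_p+1$, this will fill the full interval $[m^{\lambda}_{X},M^{\lambda}_{X}]$, and Theorem \ref{thm} will then finish the proof.

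The main obstacle, and the one genuine point where the $C_n$ case differs from the $B_n$ case treated in Proposition \ref{prop1}, is the tail block $i=s$. The long root $2e_n\in\Phi^{+}_{C}$ contributes the factor $2(a_n+1)$ to $v_s$ (rather than $(a_n+1)$, as happens for $B_n$), which is exactly why the bound on $a_n$ in the hypothesis is $0\le a_n\le d_s-d_{s-1}-1$ instead of the looser $2(d_s-d_{s-1})-1$. I would verify that, under this sharper constraint together with the bound on the $a_j$ for $d_s<j<n$, every integer in $[u_s,v_s]$ is realized by some entry of the matrices $(Q^{ss})$ and $(R^s)$ of Case (a) in Section 3.2.1. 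The combinatorics is analogous to the Picard-rank-one argument in \cite{Fang2023}, so this final step is essentially bookkeeping, but it is the one place where the proof genuinely uses the symplectic (rather than orthogonal) structure.
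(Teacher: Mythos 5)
Your proposal matches the paper's proof, which simply transports the argument of Proposition \ref{prop1} verbatim to type $C_n$ (i.e.\ $\mathfrak{e}=1$) and handles the tail block via the symplectic analogue of the interval-filling result; the only discrepancy is that for that step the paper invokes Corollary 3.17 of \cite{Du2023} (the type-$C$ statement) rather than Corollary 3.15, which is the type-$B$ version you cite. Your identification of the long root $2e_n$ as the reason for the sharper bound $0\le a_n\le d_s-d_{s-1}-1$ is exactly the point the paper leaves implicit.
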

 \begin{proof}
 	\cite{Costa2016} Proposition 3.9 and Corollary 3.17 in \cite{Du2023} play important roles in the proof. Our proof applies almost verbatim the proof of Proposition \ref{prop1}.   
 \end{proof}
 
 \begin{prop}\label{prop3}
 	Let $X=D_n/P_{d_1,\ldots,d_s}$ with $d_s\le n-2$. Fix an integer $m~(1\le m\le s+1)$. Let $E$ be an irreducible homogeneous bundle on $X$ with highest weight 
 	$$\sum_{\substack{1\le i\le s\\i\ne m}}a_{d_i}\lambda_{d_i}+\sum\limits_{j=d_{m-1}+1}^{d_m}a_j\lambda_{j}.$$ 
 	If $|a_{d_i}-a_{d_j}|\le \nu$ for any $1\le i,j\le s$ and for any $d_{m-1}+1\le j\le d_m-1$,
 	\begin{align}
 		\left\{
 		\begin{array}{lll}
 			0\le a_j\le d_2-d_1-1, & \text{when}~m=1;\\
 			0\le a_j\le \min\{d_{m+1}-d_m,d_{m-1}-d_{m-2}\}-1,& \text{when}~2\le m\le s;\\
 			0\le a_j\le d_s-d_{s-1}-1~(d_s+1\le j\le n-2)~\text{and} & \text{when}~m=s+1\\
 			0\le a_{n-1}\le d_s-d_{s-1}-1,0\le a_n-a_{n-1}\le 2(d_s-d_{s-1})-1~\text{or},\\
 			0\le a_n\le d_s-d_{s-1}-1,0\le a_{n-1}-a_n\le 2(d_s-d_{s-1})-1,\\
 		\end{array}
 		\right.
 	\end{align}
 	then $E$ is an ACM bundle on $X$. 
 \end{prop}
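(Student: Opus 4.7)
The plan is to mirror the arguments used for types $B$ and $C$ in Propositions~\ref{prop1} and \ref{prop2}, adapted to the $D_n$ associated datum. Since $d_s\le n-2$, we are in Case~(a) of the $D_n$ computation, whose extremes $m^\lambda_X$ and $M^\lambda_X$ are explicitly known from the formulas recorded earlier.

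For each $i~(1\le i\le s)$ I would set $u_i:=a_{d_i}+1$ and
\begin{align*}
v_i:=\begin{cases}\sum_{k=d_{i-1}+1}^{d_{i+1}-1}(a_k+1),& 1\le i\le s-1,\\[2pt] \sum_{k=d_{s-1}+1}^{n-2}(a_k+1)+\sum_{k=d_s+1}^{n}(a_k+1),& i=s.\end{cases}
\end{align*}
By Remark~\ref{min} we have $m^\lambda_X=\mu=\min_i u_i$, and the Case~(a) formula for $M^\lambda_X$ gives $M^\lambda_X=\max_i v_i$. Choose $p$ with $u_p=\mu$ and $d_{p+1}-d_{p-1}-1=\nu$, and $q$ with $v_q=M^\lambda_X$. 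The hypothesis $|a_{d_i}-a_{d_j}|\le \nu$ then yields
$$u_q=a_{d_q}+1\le a_{d_p}+\nu+1\le v_p+1,$$
so the ``blocks'' $[u_i,v_i]$ glue at their most extreme endpoints without leaving a gap.

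Next I would invoke Proposition~3.9 of \cite{Costa2016} together with the type-$D$ analogue from \cite{Du2023} to argue that the displayed hypotheses on the $a_j$ with $d_{m-1}+1\le j\le d_m-1$ (and the pair $a_{n-1},a_n$ when $m=s+1$) force every integer in $[u_i,v_i]$ to occur in $T^\lambda_X$, for each $i$. Combined with $u_q\le v_p+1$, this chains the intervals $[u_1,v_1],\dots,[u_s,v_s]$ into a single run covering $[\mu,M^\lambda_X]$ with $n_l\ge 1$ throughout, and Theorem~\ref{thm} then delivers that $E$ is ACM.

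The main obstacle will be the $m=s+1$ branch. Unlike $B_n$ and $C_n$, the Dynkin diagram of $D_n$ forks symmetrically at the nodes $n-1$ and $n$, so the matrices $Q^{is}$ and $R^s$ from Case~(a) of the $D_n$ datum treat $a_{n-1}$ and $a_n$ on equal footing. This is precisely what forces the hypothesis to split into the two mirror-image clauses ``$0\le a_n-a_{n-1}\le 2(d_s-d_{s-1})-1$'' and ``$0\le a_{n-1}-a_n\le 2(d_s-d_{s-1})-1$''. The technical heart of the proof is to verify, in either branch, that the values produced by $Q^{ss}$ and $R^s$ jointly exhaust every integer in $[u_s,v_s]$; this is a combinatorial bookkeeping very similar in spirit to, but slightly more delicate than, the analogous step in the proofs of Propositions~\ref{prop1} and \ref{prop2}, the extra delicacy coming from having to balance the contributions of the two spinor-type fundamental weights $\lambda_{n-1},\lambda_n$.
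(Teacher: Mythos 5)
Your proposal follows essentially the same route as the paper: the paper's proof of Proposition~\ref{prop3} simply says it "applies almost verbatim the proof of Proposition~\ref{prop1}," using \cite{Costa2016} Proposition~3.9 and Corollary~3.18 of \cite{Du2023}, which is exactly the chaining-of-intervals argument via $u_i$, $v_i$, the indices $p,q$, and the inequality $u_q\le v_p+1$ that you describe. Your additional remarks on the symmetric role of $\lambda_{n-1}$ and $\lambda_n$ in the $m=s+1$ branch are a correct elaboration of what the paper delegates to the cited type-$D$ results.
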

 \begin{proof}
 	\cite{Costa2016} Proposition 3.9 and Corollary 3.18 in \cite{Du2023} play important roles in the proof. Our proof applies almost verbatim the proof of Proposition \ref{prop1}.   
 \end{proof}
 
 
 \begin{cor}\label{cor} 
 	The universal bundles $H_{d_m}~(1\le m\le s)$ are ACM bundles on $X$. Moreover, if $d_s-d_{s-1}\ge 2$, then all $H_{d_m}^{\bot}~(1\le m\le s)$ are also ACM bundles.
 \end{cor}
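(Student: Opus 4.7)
The plan is to exploit the natural filtrations of $H_{d_m}$ and $H_{d_m}^{\bot}$ by universal subbundles, reducing the corollary to showing that each successive irreducible quotient is ACM, which will then follow from Propositions~4.1--4.3.

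First I would record the elementary fact used repeatedly: in a short exact sequence $0\to A\to B\to C\to 0$ of bundles on $X$, if both $A$ and $C$ are ACM, then so is $B$. This is immediate from the long exact sequence in cohomology, since $h^i(A(t))=h^i(C(t))=0$ for $0<i<\dim X$ and all $t\in\mathbb{Z}$ force $h^i(B(t))=0$.

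For $H_{d_m}$ I would invoke the filtration
$$0=H_{d_0}\subset H_{d_1}\subset\cdots\subset H_{d_m},$$
whose successive quotients $H_{d_i}/H_{d_{i-1}}$ are the irreducible homogeneous bundles of highest weight $\lambda_{d_i-1}-\lambda_{d_i}$ recorded in Example~2.6. By induction on $i$ via the extension fact, it suffices to verify that each such quotient is ACM. This is a direct application of Proposition~4.1 (type $B$), Proposition~4.2 (type $C$), or Proposition~4.3 (type $D$), with the role of $m$ in the proposition played by $i$: for the weight $\lambda_{d_i-1}-\lambda_{d_i}$ one has $a_{d_i-1}=1$, $a_{d_i}=-1$, and all other coefficients zero, so the numerical bounds on the $a_j$'s in the ranges prescribed by the proposition are satisfied trivially, while $|a_{d_i}-a_{d_j}|\le 1\le\nu$.

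For $H_{d_m}^{\bot}$ I would extend the filtration to
$$0\subset H_{d_1}\subset\cdots\subset H_{d_s}\subset H_{d_s}^{\bot}\subset H_{d_{s-1}}^{\bot}\subset\cdots\subset H_{d_m}^{\bot},$$
so that in addition to the quotients already treated one encounters the middle piece $H_{d_s}^{\bot}/H_{d_s}$ and the duals $H_{d_{i-1}}^{\bot}/H_{d_i}^{\bot}\cong(H_{d_i}/H_{d_{i-1}})^{*}$. The duals are irreducible homogeneous of highest weight $\lambda_{d_{i-1}+1}-\lambda_{d_{i-1}}$ and are dispatched by the same propositions with $m=i-1$. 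The hypothesis $d_s-d_{s-1}\ge 2$ enters precisely through the middle piece: its highest weight is (generically) $\lambda_{d_s+1}-\lambda_{d_s}$ by Example~2.6, and the inequality guarantees that the hypotheses of the relevant proposition with $m=s+1$ are met; the special shapes $2\lambda_n-\lambda_{n-1}$ in type $B$ when $d_s=n-1$ and $\lambda_n+\lambda_{n-1}-\lambda_{n-2}$ in type $D$ when $d_s=n-2$ are handled by the same mechanism using the explicit formulas in Example~2.6.

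The main obstacle is the bookkeeping for the middle piece $H_{d_s}^{\bot}/H_{d_s}$, since its highest weight assumes three different forms according to the type of $G$ and the position of $d_s$ relative to the end of the Dynkin diagram; each subcase must be individually matched against the numerical hypotheses of the corresponding proposition, and it is precisely this matching that isolates $d_s-d_{s-1}\ge 2$ as the right quantitative condition.
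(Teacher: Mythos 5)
Your overall strategy coincides with the paper's: filter $H_{d_m}$ (resp.\ $H_{d_m}^{\bot}$) by the universal subbundles, use that an extension of ACM bundles is ACM, read off the highest weights of the irreducible graded pieces from Example~\ref{ex}, and feed them into Propositions~\ref{prop1}--\ref{prop3}, with the hypothesis $d_s-d_{s-1}\ge 2$ entering only through the middle piece $H_{d_s}^{\bot}/H_{d_s}$. That part of your plan is sound and is exactly what the paper does.

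The step that does not survive scrutiny is the claim that for the quotient $H_{d_i}/H_{d_{i-1}}$, of highest weight $\lambda_{d_i-1}-\lambda_{d_i}$, ``the numerical bounds on the $a_j$'s \ldots are satisfied trivially.'' They are not satisfied in general. For $m=1$ the hypothesis of Proposition~\ref{prop1} demands $0\le a_j\le d_2-d_1-1$ on the free block, and since $a_{d_1-1}=1$ this forces $d_2-d_1\ge 2$; for $2\le m\le s$ one needs $\min\{d_{m+1}-d_m,\,d_{m-1}-d_{m-2}\}\ge 2$; and when $d_i-d_{i-1}=1$ the weight degenerates to $\lambda_{d_{i-1}}-\lambda_{d_i}$, so two marked coefficients differ by $2$ and the condition $|a_{d_i}-a_{d_j}|\le\nu$ becomes a genuine constraint on $d_{i+1}-d_i$. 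The same issue affects the duals $(H_{d_i}/H_{d_{i-1}})^{*}$. So whenever consecutive elements of $I$ (or the relevant neighbouring gaps) are equal to $1$, Propositions~\ref{prop1}--\ref{prop3} simply do not apply, and ACM-ness of these quotients must be checked by computing their associated datum directly and invoking Theorem~\ref{thm}; this is precisely the fallback the paper makes explicit for $H_{d_1}$ when $d_2-d_1=1$. The statement remains true, but your reduction to the propositions is incomplete without this supplementary case analysis.
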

 \begin{proof}
 	By Example \ref{ex}, we know that $H_{d_1}$ is an irreducible homogeneous bundle with highest weight $\lambda_{d_1-1}-\lambda_{d_1}$. If $d_2-d_1\ge 2$, then $H_{d_1}$ is an ACM bundle by Propositions \ref{prop1}, \ref{prop2}, \ref{prop3}. If $d_2-d_1=1$, we can still draw this conclusion by analyzing the associated datum of $H_{d_1}$ in detail. Similarly, it can be verified that for any fixed integer $m~(2\le m\le s)$, $H_{d_m}/H_{d_{m-1}}$ and its dual $(H_{d_m}/H_{d_{m-1}})^{\ast}$ are also ACM bundles. Since the extension of ACM bundles is ACM, we get that all $H_{d_m}$ are ACM bundles.
 	
 	Moreover, if $d_s-d_{s-1}\ge 2$, then based on the highest weight of $H_{d_s}^{\bot}/H_{d_s}$ (see Example \ref{ex}), we can deduce that
 	 $H_{d_s}^{\bot}/H_{d_s}$ is an ACM bundle due to Propositions \ref{prop1}, \ref{prop2}, \ref{prop3}. It follows that $H_{d_s}^{\bot}$ is also an ACM bundle. Since $H_{d_{m-1}}^{\bot}/H_{d_m}^{\bot}\cong (H_{d_m}/H_{d_{m-1}})^{\ast}$ are ACM bundles, we get that all $H_{d_m}^{\bot}$ are ACM bundles.
 \end{proof}	
 \begin{rem}
 	When $d_s-d_{s-1}=1$, $H_{d_s}^{\bot}/H_{d_s}$ may not be an ACM bundle. For example, when $G=B_n~(n\ge 3)$, $s=1$ and $d_1=1$, $X=B_n/P_1$ is a smooth quadric $\textbf{Q}_{2n-1}$. Theorem 3.5 in \cite{ottaviani1989} states that ACM bundles on $X$ can only be, up to twist and isomorphism, a direct sum of line bundles and the spinor bundle, whose highest weight is $\lambda_n$. However  $H_{d_1}^{\bot}/H_{d_1}=H_1^{\bot}/H_1$ is an irreducible homogeneous bundle on $\textbf{Q}_{2n-1}$ with highest weight $\lambda_2-\lambda_1$, hence it is not an ACM bundle.
 \end{rem}
 
If $X$ is a rational homogeneous space of Picard rank $1$, then by the Serre duality and the Kodaira vanishing theorem, it's easy to check that all line bundles on $X$ are ACM bundles. But for rational homogeneous spaces of Picard rank $>1$, we find that line bundles on them are not necessarily ACM bundles. However, using the associated datum of homogeneous bundles, we can say slightly more about line bundles on rational homogeneous spaces $G/P_{d_1,d_2}$. For convenience, we denote
 $$L_1:=L_{\lambda_{d_1}}~\text{and}~L_2:=L_{\lambda_{d_2}},$$
 where $L_{\lambda_{d_1}}$ and  $L_{\lambda_{d_2}}$ are the generators of $Pic(G/P_{d_1,d_2})$. Any line bundle on $G/P_{d_1,d_2}$ can be written as 
 $a_1L_1\otimes a_2L_2$. Recall that in \cite{Fang2022}, the author gave a necessary and sufficient condition for a line bundle on the usual flag variety $F(d_1,d_2,n):=A_{n-1}/P_{d_1,d_2}$ to be an ACM bundle. 
 \begin{prop}(\cite{Fang2022} Proposition 4.13)
 	Let $L=a_1L_1\otimes a_2L_2$ be a line bundle on the flag variety $F(d_1,d_2,n)$. Then $L$ is an ACM bundle if and only if $0\le a_2-a_1\le \min\{n,d_1+d_2\}-1$ or $0<a_1-a_2\le \min\{n,2n-d_1-d_2\}-1$. 
 \end{prop}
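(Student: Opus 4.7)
My plan is to specialize Theorem \ref{thm} to the line bundle $L=L_\lambda$ with $\lambda=a_1\lambda_{d_1}+a_2\lambda_{d_2}$ on $X=A_{n-1}/P_{d_1,d_2}$, and compute the associated datum $T^\lambda_X$ explicitly from formula \eqref{matrix1}. Since only two coordinates of $\lambda$ are nonzero, the three sub-blocks of $T^\lambda_X$ simplify to
\begin{align*}
A^{11}_{uv}&=a_1+u+v-1,\\
A^{22}_{uv}&=a_2+u+v-1,\\
A^{12}_{uv}&=\tfrac{1}{2}(a_1+a_2+d_2-d_1+u+v-1),
\end{align*}
where $(u,v)$ ranges over $[1,d_1]\times[1,d_2-d_1]$, $[1,d_2-d_1]\times[1,n-d_2]$, and $[1,d_1]\times[1,n-d_2]$ respectively. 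Hence the integer entries of $A^{11}$ form the consecutive interval $[a_1+1,a_1+d_2-1]$, those of $A^{22}$ the interval $[a_2+1,a_2+n-d_1-1]$, and the integer entries of $A^{12}$ fill the consecutive block $[\lceil(a_1+a_2+d_2-d_1+1)/2\rceil,\lfloor(a_1+a_2+n-1)/2\rfloor]$. By Remark \ref{min} and the maximum formula for type $A$, $m^\lambda_X=\min\{a_1,a_2\}+1$ and $M^\lambda_X=\max\{a_1+d_2-1,\,a_2+n-d_1-1\}$. Theorem \ref{thm} then reduces the problem to asking when the union of these three integer blocks covers every integer in $[m^\lambda_X,M^\lambda_X]$.

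The proof then splits according to the sign of $a_2-a_1$. Setting $b=a_2-a_1\ge 0$ first, when $b\le d_2-1$ the blocks $A^{11}$ and $A^{22}$ overlap and cover $[a_1+1,M^\lambda_X]$, so $L$ is ACM; when $b\ge d_2$ a genuine gap $[a_1+d_2,\,a_1+b]$ opens between them, and its containment in the integer block of $A^{12}$ is equivalent to the pair of inequalities $\lceil(b+d_2-d_1+1)/2\rceil\le d_2$ and $\lfloor(b+n-1)/2\rfloor\ge b$, which reduce to $b\le d_1+d_2-1$ and $b\le n-1$, i.e.\ $b\le\min\{n,d_1+d_2\}-1$. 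The symmetric case $b=a_1-a_2>0$ is handled identically, with potential gap $[a_2+n-d_1,\,a_2+b]$ and resulting bound $b\le\min\{n,2n-d_1-d_2\}-1$. Necessity is then obtained by exhibiting a concrete missing integer whenever $b$ exceeds the bound: for instance $a_1+d_2$ when $a_2-a_1\ge d_1+d_2$, or $a_2$ when $a_2-a_1\ge n$, and dually in the other case.

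The main technical step is the half-integer bookkeeping for $A^{12}$. Because consecutive entries of $A^{12}$ differ by $1/2$, one first has to argue that its integer values really do form a gap-free block (they do, since along the arithmetic progression integers and half-integers alternate, so the integer values themselves form an arithmetic progression of step $1$), and then translate the ceiling/floor endpoint conditions cleanly into the uniform bound $\min\{n,d_1+d_2\}-1$ (respectively $\min\{n,2n-d_1-d_2\}-1$) without introducing parity-dependent off-by-one corrections at the boundary values of $b$. Once this alignment is verified, both directions of the equivalence fall out of the case analysis above.
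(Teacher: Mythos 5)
Your computation of the associated datum and the covering argument are correct, and this is essentially the same approach the paper takes: it does not reprove this proposition (it only cites it from \cite{Fang2022}), but its proofs of the analogous type $B$, $C$, $D$ statements proceed exactly as you do, by listing the integer intervals covered by each block $A^{ij}$ (there $P^{ij}$, $Q^{ij}$, $R^i$) and checking when their union fills $[m^{\lambda}_X,M^{\lambda}_X]$ via Theorem \ref{thm}. Your endpoint inequalities for the half-integer block $A^{12}$ and the explicit missing integers for necessity all check out.
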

 
 Next, we will give the necessary and sufficient conditions under which the line bundles on $G/P_{d_1,d_2}$ with $G$ simple of types $B$, $C$ and $D$, are ACM bundles. 
 
 \begin{prop}
 	Let $L=a_1L_1\otimes a_2L_2$ be a line bundle on  $B_n/P_{d_1,d_2}$. 
 	\begin{enumerate}
 		\item[(a)] Suppose $d_2\le n-1$. Then $L$ is an ACM bundle if and only if
 		\begin{align*}
 			&0\le a_2-a_1\le \min\{d_1+d_2,2n-d_1\}-1~\text{or}\\
 			&0<a_1-a_2\le \min\{2n-d_2,4n-2(d_1+d_2)\}-1.  
 		\end{align*}
 		\item[(b)] Suppose $d_2=n$. Then $L$ is an ACM bundle if and only if
 		\begin{align*}
 			&0\le a_2-a_1\le \min\{2d_1+n,2n-d_1\}-1~\text{or}\\
 			&0<a_1-a_2\le \min\{3(n-d_1),2n\}-1.     \end{align*}  
 	\end{enumerate}
 \end{prop}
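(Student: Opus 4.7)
The plan is to apply Theorem~\ref{thm} directly: $L=L_{a_1\lambda_{d_1}+a_2\lambda_{d_2}}$ is ACM if and only if every integer in $[m^{\lambda}_{X},M^{\lambda}_{X}]$ lies in $T^{\lambda}_{X}$. A preliminary observation is that tensoring $L$ by the minimal ample class $L_X=L_1\otimes L_2$ replaces $(a_1,a_2)$ by $(a_1+1,a_2+1)$ and preserves the ACM property; hence the ACM condition depends only on the difference $a_2-a_1$, and one may reduce to the two subcases $a_1=0,\ a_2=c\ge 0$ and $a_2=0,\ a_1=c>0$.

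For part (a), where $d_2\le n-1$, I would specialize the matrices $P^{ij}$, $Q^{ij}$, $R^{i}$ of Section~3.2.1.II to $\lambda=c\lambda_{d_2}$ and read off the integer-valued entries. A direct computation shows that $P^{11}$ contributes the consecutive integers $[1,d_2-1]$ while $P^{22}$ and $Q^{22}$ together contribute the block $[c+1,\,c+2n-d_1-d_2-1]$, with $m^{\lambda}_{X}=1$ and $M^{\lambda}_{X}=\max\{d_2-1,\,c+2n-d_1-d_2-1\}$. Thus the ACM criterion reduces to filling the ``middle gap'' $[d_2,c]$ (empty when $c<d_2$) from the fractional matrices. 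A careful parity-and-range analysis of $P^{12}_{uv}=\tfrac{c+d_2-d_1+u+v-1}{2}$ and $Q^{12}_{uv}=\tfrac{c+2n-d_2-u-v+1}{2}$ shows that every integer in the gap is attained precisely when $c\le\min\{d_1+d_2,\,2n-d_1\}-1$: the first bound is sharp because, beyond it, the admissible range of $u+v$ in $P^{12}$ becomes insufficient to reach the lower end $d_2$ of the gap; the second bound is sharp because, beyond it, the admissible range of $u+v$ in $Q^{12}$ becomes insufficient to reach the upper end $c$; the matrix $Q^{11}$ (denominator $3$) together with $R^{1},R^{2}$ contributes only integers already present. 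The symmetric subcase $a_2=0,\,a_1=c>0$ is analogous with the roles of $d_1$ and $d_2$ interchanged and with $R^{1},R^{2}$ now on the limiting side; it produces the second alternative $0<c\le\min\{2n-d_2,\,4n-2(d_1+d_2)\}-1$.

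Part (b), where $d_2=n$, is handled in the same spirit but is notationally simpler since the datum collapses to $\tilde P^{11}$, $\tilde Q^{11}$, $\tilde R^{1}$, $\tilde R^{2}$. With $a_1=0,\,a_2=c$ one finds $\tilde P^{11}\supset[1,n-1]$ and $\tilde R^{2}\supset[c+1,\,c+2n-2d_1-1]$, while the remaining gap is filled by $\tilde Q^{11}$ (denominator $2$) and $\tilde R^{1}$ (denominator $3$); the analogous extremal-index analysis then produces $0\le c\le\min\{2d_1+n,\,2n-d_1\}-1$, and the reverse case gives $0<c\le\min\{3(n-d_1),\,2n\}-1$. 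The main obstacle throughout is the gap-filling step: the matrices involved have distinct denominators and differently-shaped index rectangles, so one must verify matrix-by-matrix which integers in the gap are genuinely attainable, and extract the clean thresholds on $c$ by tracking precisely which extremal $(u,v)$ first violates a parity or range constraint. This is the combinatorial heart of the proof rather than a conceptual difficulty, but it is the step at which the two ``competing'' bounds in each part actually appear.
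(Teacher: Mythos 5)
Your overall strategy coincides with the paper's: twist by the minimal ample class to reduce to $a_1=0$ or $a_2=0$, write out the matrices $P^{ij}$, $Q^{ij}$, $R^{i}$ of Section 3.2.1.II, and determine when the integer gap between the block $[1,d_2-1]$ (from $P^{11}$) and the block $[c+1,\,c+2n-1-d_1-d_2]$ (from $P^{22}$, $Q^{22}$) is filled. However, there is a concrete error in the gap-filling step of part (a): you discard $Q^{11}$ as contributing ``only integers already present'' and attribute the second threshold $2n-d_1$ to $Q^{12}$. This cannot work. With $a_1=0$, $a_2=c$, the matrix $Q^{12}_{uv}=\frac{c+2n+1-d_2-(u+v)}{2}$ attains its maximum $\frac{c+2n-d_2-1}{2}$ at $u+v=2$, so requiring it to reach the top $c$ of the gap gives $c\le 2n-d_2-1$, i.e.\ the threshold $2n-d_2$, not $2n-d_1$; since $d_1<d_2$ this is strictly more restrictive than what the proposition asserts. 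The correct source of the bound $2n-d_1$ is precisely $Q^{11}$, whose entries $\frac{2c+2n+1-d_1-(u+v)}{3}$ supply every integer in $\bigl[\frac{2c+2n+1-(d_1+d_2)}{3},\frac{2c+2n-d_1-1}{3}\bigr]$ and whose upper end is $\ge c$ exactly when $c\le 2n-d_1-1$. In the regime $2n-d_2-1<c\le\min\{d_1+d_2,2n-d_1\}-1$ (nonempty, e.g.\ $n=5$, $d_1=3$, $d_2=4$, $c=6$) the integers between $\lfloor\frac{c+2n-d_2-1}{2}\rfloor+1$ and $c$ come only from $Q^{11}$, so it is essential; the paper's proof accordingly must also check that the $Q^{11}$-interval glues to the $P^{12},Q^{12},R^{1}$-interval, namely that $\frac{2c+2n+1-(d_1+d_2)}{3}\le\lfloor\frac{c+2n-d_2-1}{2}\rfloor+1$. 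Your part (b) sketch does correctly invoke both fractional matrices $\tilde Q^{11}$ and $\tilde R^{1}$, but as written your part (a) argument would establish a different, strictly weaker statement, so the proof is not correct without repairing this step.
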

 
 \begin{proof}
 	First of all, we note that the weight of $L$ is $a_1\lambda_{d_1}+a_2\lambda_{d_2}$. For case (a), suppose $a_1\le a_2$, then we can assume without loss of generality that $a_1=0$. In this case, the associated datum of $L$ consists of the following eight matrices: (see Section 3.2.1 II Case (a) (\ref{P}), (\ref{Q}), (\ref{R}))
 	
 	\begin{align*}
 		\begin{split}&P^{11}_{uv}=u+v-1,~Q^{11}_{uv}=\frac{2a_2+2n+1-d_1-(u+v)}{3}~(1\le u\le d_1,1\le v \le d_2-d_1),\\&P^{22}_{uv}=a_2+u+v-1,~ Q^{22}_{uv}=a_2+2n+1-(d_1+d_2)-(u+v)~(1\le u\le d_2-d_1,1\le v \le n-d_2),\\&P^{12}_{uv}=\frac{a_2+d_2-d_1+u+v-1}{2},~Q^{12}_{uv}=\frac{a_2+2n+1-d_2-(u+v)}{2}~(1\le u\le d_1,1\le v \le n-d_2),\end{split}\end{align*}
 	and
 	\begin{align*}
 		&R^{1}_{uv}=\frac{2a_2+2n+1-(u+v)}{4}~(1\le u\le v\le d_1),\\
 		&R^{2}_{uv}=\frac{2a_2+2(n-d_1)+1-(u+v)}{2}~(1\le u\le v\le d_2-d_1).   
 	\end{align*}
 	By further analysis, we can see that 
 	\begin{enumerate}
 		\item[\romannumeral1.] any integer $1\le l\le d_2-1$ appears as an entry of $P^{11}$;
 		\item[\romannumeral2.] any integer $a_2+1\le l\le a_2+2n-1-(d_1+d_2)$ appears as an entry of $P^{22}$, $Q^{22}$ or $R^{2}$;
 		\item[\romannumeral3.] any integer $\frac{a_2+d_2+1-d_1}{2}\le l\le \frac{a_2+2n-d_2-1}{2}$ appears as an entry of $P^{12}$, $Q^{12}$ or $R^{1}$;
 		\item[\romannumeral4.] any integer $\frac{2a_2+2n+1-(d_1+d_2)}{3}\le l\le \frac{2a_2+2n-d_1-1}{3}$ appears as an entry of $Q^{11}$.
 	\end{enumerate}
 	
 	If $a_2\le d_2-1$, then $L$ is an ACM bundle by Proposition \ref{prop1}. 
 	
 	If $a_2>d_2-1$, then $a_2+1>d_2$ and $\frac{a_2+d_2+1-d_1}{2}<\frac{2a_2+2n+1-(d_1+d_2)}{3}$. Combining Theorem \ref{thm} and the above statements \romannumeral1, \romannumeral2, we get that $L$ is an ACM bundle if and only if all integer values between $d_2$ and $a_2$ appear as entries in the associated datum of $L$. 
 	Since $\frac{a_2+d_2+1-d_1}{2}<\frac{2a_2+2n+1-(d_1+d_2)}{3}$, this forces to have
 	$$\frac{a_2+d_2+1-d_1}{2}\le d_2,~\text{i.e.}~a_2\le d_2+d_1-1,$$
 	otherwise the value $d_2$ would not appear as an entry in the associated datum of $L$. Arguing in the same way we must have $a_2\le 2n-d_1-1$, otherwise the value $a_2$ would not appear as an entry in the associated datum of $L$. On the contrary, if $a_2\le 2n-d_1-1$, then it's easy to check that
 	$$\frac{2a_2+2n-d_1-1}{3}\ge a_2, ~\text{and}~\frac{2a_2+2n+1-(d_1+d_2)}{3}\le\lfloor \frac{a_2+2n-d_2-1}{2} \rfloor+1.$$
 	By the statements \romannumeral3, \romannumeral4, we get that all integer values between $d_2$ and $a_2$ appear as entries in the associated datum of $L$, hence $L$ is an ACM bundle. 
 	
 	Suppose $a_1>a_2$, then we can assume that $a_2=0$. After analyzing the associated datum of $L$ in this case, we can see that
 	\begin{enumerate}
 		\item[I.] any integer $a_1+1\le l\le a_1+d_2-1$ appears as an entry of $P^{11}$;
 		\item[II.] any integer $1\le l\le 2n-(d_1+d_2+1)$ appears as an entry of $P^{22}$, $Q^{22}$ or $R^{2}$;
 		\item[III.] any integer $\frac{a_1+d_2+1-d_1}{2}\le l\le \frac{a_1+2n-d_2-1}{2}$ appears as an entry of $P^{12}$, $Q^{12}$ or $R^{1}$;
 		\item[IV.] any integer $\frac{a_1+2n+1-(d_1+d_2)}{3}\le l\le \frac{a_1+2n-d_1-1}{3}$ appears as an entry of $Q^{11}$.
 	\end{enumerate}
 	
 	If $a_1\le 2n-(d_1+d_2+1)$, then $a_1+1\le 2n-(d_1+d_2)$. By the above statements I, II and Theorem \ref{thm}, we know immediately that $L$ is an ACM bundle. 
 	
 	If $a_1>2n-(d_1+d_2+1)$, then $a_1+1>2n-(d_1+d_2)$ and $\frac{a_1+2n-d_2-1}{2}>\frac{a_1+2n-d_1-1}{3}$. Combining Theorem \ref{thm} and the above statements I, II, we get that $L$ is an ACM bundle if and only if all integer values between $2n-(d_1+d_2)$ and $a_1$ appear as entries in the associated datum of $L$. 
 	Since $\frac{a_1+2n-d_2-1}{2}>\frac{a_1+2n-d_1-1}{3}$, this forces to have
 	$$\frac{a_1+2n-d_2-1}{2}\ge a_1,~\text{i.e.}~a_1\le 2n-d_2-1,$$
 	otherwise the value $a_1$ would not appear as an entry in the associated datum of $L$. Arguing in the same way we must have $a_1\le 4n-2(d_1+d_2)-1$, otherwise the value $2n-(d_1+d_2)$ would not appear as an entry in the associated datum of $L$. On the contrary, if $a_1\le 4n-2(d_1+d_2)-1$, then 
 	$$\frac{a_1+2n+1-(d_1+d_2)}{3}\ge 2n-(d_1+d_2), ~\text{and}~\frac{a_1+d_2+1-d_1}{2}\le\lfloor \frac{a_1+2n-d_1-1}{3} \rfloor+1.$$
 	By the statements III and IV, we get that all integer values between $2n-(d_1+d_2)$ and $a_1$ appear as entries in the associated datum of $L$. This implies that $L$ is an ACM bundle and concludes the proof. The proof of case (b) is similar, and so we omit it.
 \end{proof}
 
 With the similar proof, we can get the following results.
 \begin{prop}
 	Let $L=a_1L_1\otimes a_2L_2$ be a line bundle on  $C_n/P_{d_1,d_2}$. 
 	\begin{enumerate}
 		\item[(a)] Suppose $d_2\le n-1$. Then $L$ is an ACM bundle if and only if
 		\begin{align*}
 			&0\le a_2-a_1\le \min\{d_1+d_2,2n-(d_1-1)\}-1~\text{or}\\
 			&0<a_1-a_2\le \min\{2n-(d_2-1),4n-2(d_1+d_2-1)\}-1.  
 		\end{align*}
 		\item[(b)] Suppose $d_2=n$. Then $L$ is an ACM bundle if and only if
 		\begin{align*}
 			&0\le a_2-a_1\le \min\{d_1+n-1,2n-d_1\}~\text{or}\\
 			&0<a_1-a_2\le \min\{2(n-d_1)+1,n\}.  \end{align*}
 	\end{enumerate}
 \end{prop}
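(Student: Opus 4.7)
The plan is to follow the strategy used for the $B_n$ case almost verbatim, adapting the formulas to $C_n$. The key inputs are: Theorem \ref{thm} (ACM iff $n_l\ge 1$ for every integer $l\in[m^\lambda_X,M^\lambda_X]$), the explicit description of the associated datum in Section 3.2.1 II with $\mathfrak{e}=1$, and the observation that tensoring $L$ by $\mathcal{O}_X(1)=L_1\otimes L_2$ replaces $(a_1,a_2)$ by $(a_1{+}1,a_2{+}1)$ and hence shifts every entry of $T^\lambda_X$ by $1$ (since $(\lambda+\rho-\varpi,\alpha)/(\varpi,\alpha)=(\lambda+\rho,\alpha)/(\varpi,\alpha)-1$). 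This invariance lets me assume $\min(a_1,a_2)=0$.

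I would split each of (a) and (b) into the two subcases $a_1\le a_2$ (normalize $a_1=0$, so $\lambda=a_2\lambda_{d_2}$) and $a_1>a_2$ (normalize $a_2=0$, so $\lambda=a_1\lambda_{d_1}$). In subcase (a) with $a_1=0$, I instantiate the eight matrices $P^{ij}$, $Q^{ij}$, $R^i$ from (\ref{P}), (\ref{Q}), (\ref{R}) using $\mathfrak{e}=1$ and read off the integer ranges they cover: $P^{11}$ gives $1,\dots,d_2-1$; $P^{22}\cup Q^{22}\cup R^{2}$ gives $a_2+1,\dots,a_2+2n-d_1-d_2$; $P^{12}\cup Q^{12}\cup R^{1}$ gives the half-integer/integer strip in the middle roughly $[\tfrac{a_2+d_2-d_1+1}{2},\tfrac{a_2+2n-d_2}{2}]$; and $Q^{11}$ gives the strip $[\tfrac{2a_2+2n-d_1-d_2+2}{3},\tfrac{2a_2+2n-d_1}{3}]$. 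Computing $m^\lambda_X=1$ and $M^\lambda_X=\max\{v_1,v_2\}$ from Remark \ref{min} and (\ref{seq}), the question reduces to: which values of $a_2$ make the four ranges above cover every integer in $[1,M^\lambda_X]$? A case-by-case comparison of the endpoints (as in the preceding $B_n$-proof) yields the sharp bound $a_2-a_1\le \min\{d_1+d_2,2n-d_1+1\}-1$. The subcase $a_1>a_2$ is handled dually: one instead tracks which integers appear in each matrix when $a_2=0$, with $M^\lambda_X$ now controlled by the $a_1$ contribution in $Q^{11}$, $Q^{12}$ and $R^1$, and the gap analysis gives $a_1-a_2\le \min\{2n-d_2+1,4n-2(d_1+d_2)+2\}-1$.

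For case (b) with $d_2=n$, the relevant matrices are instead $\tilde{P}^{ij}$, $\tilde{Q}^{ij}$, $\tilde{R}^i$ from Case (b) of Section 3.2.1 II, with the factor $2(s+\mathfrak{e})-(i+j+1)$ and $2(s+\mathfrak{e}-i)$ in the denominators. Since now $\varpi=\lambda_{d_1}+\lambda_n$ but the component at position $n$ carries weight $\mathfrak{e}=1$, the formula for $M^\lambda_X$ becomes $\max\{a_{d_1}+\cdots,\sum(a_k+1)+(a_n+1)\}$ with a different normalization. Repeating the same integer-coverage analysis — and again normalizing $\min(a_1,a_2)=0$ — produces the thresholds $\min\{d_1+n-1,2n-d_1\}$ and $\min\{2(n-d_1)+1,n\}$ stated in (b).

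The main obstacle is bookkeeping rather than conceptual: one must verify that consecutive integer ranges contributed by the matrices $P^{ij}$, $Q^{ij}$, $R^i$ overlap (or are contiguous) throughout $[m^\lambda_X,M^\lambda_X]$, and identify the first integer not covered once $a_2$ (resp.\ $a_1$) passes each critical threshold. The asymmetry between the type-$B$ and type-$C$ formulas — the term $2\mathfrak{e}(a_n+1)$ changes from $a_n+1$ to $2(a_n+1)$, and the factor $\tfrac{1}{\mathfrak{e}}$ disappears in (b) — is precisely what accounts for the numerical difference between the $B_n$ thresholds (previous proposition) and the $C_n$ thresholds stated here; showing these changes propagate correctly to the endpoints is the only delicate point.
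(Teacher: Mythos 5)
Your proposal matches the paper's approach exactly: the paper proves the $B_n$ case in detail via the integer-coverage analysis of the matrices $P^{ij}$, $Q^{ij}$, $R^i$ (and $\tilde P$, $\tilde Q$, $\tilde R$ when $d_s=n$) and then states that the $C_n$ proposition follows "with the similar proof," which is precisely the adaptation with $\mathfrak{e}=1$ that you carry out. Your instantiated ranges and the resulting thresholds agree with the stated bounds, so this is the intended argument.
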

 
 \begin{prop}
 	Let $L=a_1L_1\otimes a_2L_2$ be a line bundle on  $D_n/P_{d_1,d_2}$. 
 	\begin{enumerate}
 		\item[(a)] Suppose $d_2\le n-2$. Then $L$ is an ACM bundle if and only if
 		\begin{align*}
 			&0\le a_2-a_1\le \min\{d_1+d_2,2n-(d_1+1)\}-1~\text{or}\\
 			&0<a_1-a_2\le \min\{2n-(d_2+1),4n-2(d_1+d_2+1)\}-1.  
 		\end{align*}
 		\item[(b)] Suppose $d_2=n-1$ or $d_2=n,~d_1\le n-2$. Then $L$ is an ACM bundle if and only if
 		\begin{align*}
 			&0\le a_2-a_1\le \min\{2d_1+n,2n-d_1-1\}-1~\text{or}\\
 			&0<a_1-a_2\le \min\{3(n-d_1-1),2(n-1)\}-1.   
 		\end{align*}
 		\item[(c)] Suppose $d_2=n$ and $d_1=n-1$. Then $L$ is an ACM bundle if and only if
 		\begin{align*}
 			0\le |a_1-a_2| \le 2n-3.
 		\end{align*}
 	\end{enumerate}
 \end{prop}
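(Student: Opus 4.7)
The strategy follows exactly that of the preceding proposition for $B_n/P_{d_1,d_2}$: apply Theorem~\ref{thm} to recast the problem as a covering problem for the associated datum $T^\lambda_X$, then analyze the explicit formulas recorded in Section~3.2.1~III for type $D$.

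First I would note that twisting by a power of $\mathcal{O}_X(1)$ shifts every entry of $T^\lambda_X$ by the same integer, so it preserves ACM-ness; hence it suffices to treat the two normalized cases $a_1=0,\,a_2\ge 0$ and $a_2=0,\,a_1>0$. The asymmetric appearance of $a_2-a_1$ versus $a_1-a_2$ in the statement reflects the asymmetry between the $P^{11}$-block (whose denominators involve $d_1$) and the $P^{22}$-block (whose denominators involve $d_2$), and will fall out once both normalizations are analyzed.

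Next, for each of (a), (b), (c) I would substitute the normalized weight into the appropriate formulas from Section~3.2.1~III: case~(a) uses $P^{ij},Q^{ij},R^i$ from Case~(a) there; the first sub-case of~(b) uses $\tilde P^{ij},\tilde Q^{ij},\tilde R^i$ from Case~(b), and the second sub-case of~(b) uses the identical datum produced by Case~(c) (this is why the proposition groups them); case~(c) uses $\hat P^{ij},\hat Q^{ij},\hat R^i$ from Case~(d). In each situation the datum splits into four blocks playing the same roles as the four types I--IV in the $B_n$ proof: a small-integer $P^{11}$-block, a large-integer $Q^{11}$-block (or its analogue), and two middle blocks coming from $P^{12},Q^{12}$ together with $R^1,R^2$. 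I would write down the consecutive integers each block supplies, read off $m^\lambda_X$ and $M^\lambda_X$ from Remark~\ref{min} and from the explicit $M$-formulas at the end of each Case, and then demand that the middle blocks bridge the $P^{11}$-block to the $Q^{11}$-block. Solving the two resulting boundary inequalities in the unknown $\max\{a_1,a_2\}$ yields precisely the bounds $\min\{\cdot,\cdot\}-1$ listed in the three parts of the proposition.

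The main obstacle is case~(b), because the fundamental weights $\lambda_{n-1},\lambda_n$ contribute half-integer coefficients both to $\rho$ and to $\sum_{i\in I}\lambda_i$, so the denominators in $\tilde P^{ij},\tilde Q^{ij},\tilde R^i$ differ from those encountered in case~(a) and one of the component progressions has step $\tfrac12$, of which only the integer values matter. One must also verify that the two sub-cases $d_2=n-1$ and $d_2=n,\,d_1\le n-2$ really produce the same interval of integer entries in $T^\lambda_X$, so that a single pair of inequalities governs both. Case~(c) is by contrast essentially trivial: with $(d_1,d_2)=(n-1,n)$ the $\hat P,\hat Q$ blocks are reduced to a single entry and only the two $\hat R^i$-blocks remain, which immediately forces the single symmetric condition $|a_1-a_2|\le 2n-3$.
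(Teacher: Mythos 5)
Your overall strategy coincides with the paper's: the paper offers no separate proof for the $D_n$ proposition, saying only that it follows ``with the similar proof'' from the $B_n$ case, and that proof is exactly the normalize-twist-and-cover analysis you describe; your matching of the three parts to Cases (a), (b)/(c) and (d) of Section 3.2.1 III, including the observation that Cases (b) and (c) there yield the same datum (which is why the two sub-cases of part (b) are grouped), is correct.

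One structural claim in your sketch of part (c) is wrong, although the method itself would still deliver the stated bound once carried out honestly. With $(d_1,d_2)=(n-1,n)$ we have $s=2$, so the Case (d) datum consists of $\hat P^{11}$ and $\hat Q^{11}$, each a $(n-1)\times 1$ column rather than a single entry, together with only \emph{one} block $\hat R^{1}$ (the range is $1\le i\le s-1=1$), not two. Writing the weight as $a_1\lambda_{n-1}+a_2\lambda_n$, the three blocks contribute respectively $\{a_1+1,\dots,a_1+n-1\}$, $\{a_2+1,\dots,a_2+n-1\}$, and the arithmetic progression of step $\tfrac12$ running from $\tfrac{a_1+a_2+3}{2}$ to $\tfrac{a_1+a_2+2n-3}{2}$; assuming $a_1\le a_2$ and $a_1=0$, the first two blocks already cover everything when $a_2\le n-1$, and otherwise the integers supplied by $\hat R^1$ must bridge $n-1$ to $a_2+1$, which happens exactly when $a_2\le 2n-3$. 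So the symmetric condition $|a_1-a_2|\le 2n-3$ does not drop out ``immediately'' from two $R$-blocks alone; it comes from the same three-block bridging argument as in the other cases. With that correction your proposal reproduces the proof the paper intends.
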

\bibliography{ref}

\begin{thebibliography}{10}

\bibitem{Brambilla2011}
M.~C. Brambilla and D.~Faenzi.
\newblock Moduli spaces of rank-2 {ACM} bundles on prime {F}ano threefolds.
\newblock {\em Michigan Math. J.}, 60(1):113--148, 2011.

\bibitem{Brennan1987}
J.~Brennan, J.~Herzog, and B.~Ulrich.
\newblock Maximally generated {C}ohen-{M}acaulay modules.
\newblock {\em Math. Scand.}, 61(2):181--203, 1987.

\bibitem{Buchweitz1987}
R.~Buchweitz, G.~Greuel, and F.~O. Schreyer.
\newblock {C}ohen-{M}acaulay modules on hypersurface singularities {II}.
\newblock {\em Invent. Math.}, 88(1):165--182, 1987.

\bibitem{carter2005lie}
R.~W. Carter.
\newblock {\em Lie algebras of finite and affine type}.
\newblock Number~96. Cambridge University Press, 2005.
\newblock xviii+632 pp.

\bibitem{Casanellas20005}
M.~Casanellas, E.~Drozd, and R.~Hartshorne.
\newblock Gorenstein liaison and {ACM} sheaves.
\newblock {\em J. Reine Angew. Math.}, 584:149--171, 2005.

\bibitem{Casanellas2011}
M.~Casanellas and R.~Hartshorne.
\newblock {ACM} bundles on cubic surfaces.
\newblock {\em J. Eur. Math. Soc. (JEMS)}, 13(3):709--731, 2011.

\bibitem{Casanellas2012}
M.~Casanellas, R.~Hartshorne, F.~Geiss, and F.~O. Schreyer.
\newblock Stable {U}lrich bundles.
\newblock {\em Internat. J. Math.}, 23(8):1250083, 2012.

\bibitem{Coskun2017}
I.~Coskun, L.~Costa, J.~W. Huizenga, and R.~M. Miró-Roig.
\newblock Ulrich {S}chur bundles on flag varieties.
\newblock {\em J. Algebra}, 474:49--96, 2017.

\bibitem{Costa2016}
L.~Costa and R.~M. Mir{\'o}-Roig.
\newblock Homogeneous {ACM} bundles on a {G}rassmannian.
\newblock {\em Adv. Math.}, 289:95--113, 2016.

\bibitem{Costa2015}
L.~Costa and R.~M. Miró-Roig.
\newblock {GL(V)}-invariant {U}lrich bundles on {G}rassmannians.
\newblock {\em Math. Ann.}, 361(1-2):443--457, 2015.

\bibitem{Du2023}
R.~Du, X.Y. Fang, and P.~Ren.
\newblock Homogeneous {ACM} bundles on isotropic {G}rassmannians.
\newblock {\em Forum Math.}, 35(3):763--782, 2023.

\bibitem{Eisenbud1988}
D.~Eisenbud and J.~Herzog.
\newblock The classification of homogeneous {C}ohen-{M}acaulay rings of finite
  representation type.
\newblock {\em Math.Ann.}, 280(2):347--352, 1988.

\bibitem{Faenzi2008}
D.~Faenzi.
\newblock Rank 2 arithmetically {C}ohen-{M}acaulay bundles on a nonsingular
  cubic surface.
\newblock {\em J. Algebra}, 319(1):143--186, 2008.

\bibitem{F-P}
D.~Faenzi and J.~Pons-Llopis.
\newblock The {C}ohen-{M}acaulay representation type of arithmetically
  {C}ohen-{M}acaulay varieties.
\newblock {\em EPIGA}, 5, 2021.

\bibitem{Fang2022}
X.Y. Fang.
\newblock The generalizations of {H}orrocks criterion on flag varieties.
\newblock {\em Comm. Anal. Geom.}, to appear, 2022.

\bibitem{Fang2023}
X.Y. Fang, Y.~Nakayama, and P.~Ren.
\newblock Homogeneous {ACM} bundles on exceptional {G}rassmannians.
\newblock {\em arXiv:}, 2211.02275, 2023.

\bibitem{Filip2014}
M.~Filip.
\newblock Rank 2 {ACM} bundles on complete intersection {C}alabi-{Y}au
  threefolds.
\newblock {\em Geom. Dedicata}, 173:331--346, 2014.

\bibitem{Fonarev2016}
A.~V. Fonarev.
\newblock Irreducible {U}lrich bundles on isotropic {G}rassmannians.
\newblock {\em Mosc. Math. J.}, 16(4):711--726, 2016.

\bibitem{fulton2013representation}
W.~Fulton and J.~Harris.
\newblock {\em Representation theory: a first course}, volume Graduate Texts in
  Mathematics, 129.
\newblock Springer-Verlag, New York, 1991.
\newblock xvi+551 pp.

\bibitem{Horrocks1964}
G.~Horrocks.
\newblock Vector bundles on the punctured spectrum of a local ring.
\newblock {\em Proc. Lond. Math. Soc.}, 3(4):689--713, 1964.

\bibitem{Knorrer1987}
H.~Kn{\"o}rrer.
\newblock {C}ohen-{M}acaulay modules on hypersurface singularities {I}.
\newblock {\em Invent. Math.}, 88(1):153--164, 1987.

\bibitem{Lee2021-2}
K.~S. Lee and K.~D. Park.
\newblock Equivariant {U}lrich bundles on exceptional homogeneous varieties.
\newblock {\em Adv. Geom.}, 21(2):187--205, 2021.

\bibitem{Lee2021}
K.~S. Lee and K.~D. Park.
\newblock Moduli spaces of {U}lrich bundles on the fano 3-fold ${V}_5$.
\newblock {\em J. Algebra}, 574:262--277, 2021.

\bibitem{ottaviani1989}
G.~Ottaviani.
\newblock Some extensions of {H}orrocks criterion to vector bundles on
  {G}rassmannians and quadrics.
\newblock {\em Ann. Mat. Pura Appl.}, 155(1):317--341, 1989.

\bibitem{ottaviani1995rational}
G.~Ottaviani.
\newblock Rational homogeneous varieties.
\newblock {\em Lecture notes for the summer school in Algebraic Geometry in
  Cortona}, 1995.

\bibitem{Ravindra2019}
G.~V. Ravindra and A.~Tripathi.
\newblock Rank 3 {ACM} bundles on general hypersurfaces in $\mathbb{P}^5$.
\newblock {\em Adv. Math.}, 355:106780, 2019.

\bibitem{snow1989homogeneous}
D.~M. Snow.
\newblock Homogeneous vector bundles.
\newblock {\em Group Actions and Invariant Theory (Montreal 1988), CMS Conf.
  Proc.}, 10:193--205, 1989.

\bibitem{Watanabe2008}
K.~Watanabe.
\newblock {ACM} line bundles on polarized {K}3 surfaces.
\newblock {\em Geom. Dedicata}, 203(1):321--335, 2019.

\end{thebibliography}

\end{document}